\def\namedlabel#1#2{\begingroup
    #2%
    \def\@currentlabel{#2}%
    \phantomsection\label{#1}\endgroup
}
\patchcmd{\NAT@test}{\else \NAT@nm}{\else \NAT@nmfmt{\NAT@nm}}{}{}
\DeclareRobustCommand\citepos
   \let\NAT@nmfmt\NAT@posfmt
\let\NAT@ctype\z@\NAT@partrue
\let\NAT@orig@nmfmt\NAT@nmfmt
\def\NAT@posfmt#1{\NAT@orig@nmfmt{#1's}}
\newtheorem{thm}{Theorem}
\newtheorem{lemma}{Lemma}
\newtheorem{prop}{Proposition}
\newcommand\numberthis{\addtocounter{equation}{1}\tag{\theequation}}
\def\1{\mathbf{1}}
\def\argmax{\operatorname{argmax} \displaylimits}
\def\b{\beta}
\def\bb{\boldsymbol{\beta}}
\def\be{\boldsymbol{\epsilon}}
\def\bm{\boldsymbol{\mu}}
\def\bth{\boldsymbol{\theta}}
\def\bz{\boldsymbol{\zeta}}
\def\C{\mathcal{C}}
\def\Cov{\mathrm{Cov}}
\def\diag{\mathrm{diag}}
\def\e{\epsilon}
\def\E{\mathbb{E}}
\def\g{\gamma}
\def\HS{\mathrm{HS}}
\def\I{\mathcal{I}}
\def\i{\underline{i}}
\def\J{J_0}
\def\l{\lambda}
\def\L{\Lambda}
\def\P{\mathbb{P}}
\def\q{\mathbf{q}}
\def\r{\mathbf{r}}
\def\R{\mathbb{R}}
\def\s{\sigma}
\def\tr{\mathrm{tr}}
\def\T{\top}
\def\u{\mathbf{u}}
\def\U{\mathcal{U}}
\def\vfrak{\mathfrak{v}}
\def\Var{\mathrm{Var}}
\def\w{\mathbf{w}}
\def\W{\mathbf{W}}
\def\x{\mathbf{x}}
\def\y{\mathbf{y}}
\def\z{\mathbf{z}}
\newcommand{\commentout}[1]{}
\begin{document}
\allowdisplaybreaks
\bibliographystyle{ims}
\begin{frontmatter}
\title{Flexible results for quadratic forms with applications to
  variance components estimation \protect}

\begin{aug}
\author{\fnms{Lee H.} \snm{Dicker}\thanksref{t1}
\ead[label=e1]{ldicker@stat.rutgers.edu}}
\and \author{\fnms{Murat A.} \snm{Erdogdu}\ead[label=e2]{erdogdu@stanford.edu}}

\thankstext{t1}{Supported by NSF Grants DMS-1208785 and DMS-1454817}
\runauthor{L.H. Dicker and M.A Erdogdu}

\affiliation{Rutgers University\thanksmark{m1} and Stanford University\thanksmark{m2}}

\address{Department of Statistics and Biostatistics \\ Rutgers University \\ 501 Hill Center \\ Piscataway, NJ 08854 \\
\printead{e1}}
\address{Statistics Department \\ Stanford
  University \\ Sequoia Hall \\ Stanford, CA 94305 \\
\printead{e2}}
\end{aug}

\runtitle{Quadratic forms and variance components estimation}

\begin{keyword}[class=AMS]
\kwd[Primary ]{62F99}
\kwd[; secondary ]{62E17, 62F12}
\end{keyword}

\begin{keyword}
\kwd{Hanson-Wright inequality}
\kwd{random-effects models}
\kwd{model misspecification}
\kwd{Stein's method}
\kwd{uniform concentration bounds}
\end{keyword}

\begin{abstract}
We derive convenient uniform concentration bounds and finite sample
multivariate normal
approximation results for quadratic forms, then describe some
applications involving variance components estimation in linear
random-effects models.  Random-effects models and variance components estimation are classical
topics in statistics, with a corresponding well-established asymptotic
theory.  However, 
our finite sample results for quadratic forms provide
additional flexibility for easily analyzing random-effects models in
non-standard settings, which are becoming more important in modern
applications (e.g. genomics).  For instance, in addition to deriving novel non-asymptotic
bounds for variance components estimators in classical linear random-effects models, we provide a concentration bound for
variance components estimators in linear models with correlated random-effects.
Our general concentration bound is a uniform version of the
Hanson-Wright inequality.  The main normal
approximation result in the paper is derived using
\citepos{reinert2009multivariate} embedding technique  and multivariate
Stein's method with exchangeable pairs.  
\end{abstract}

\end{frontmatter}

\section{Introduction} \label{sec:intro}

Suppose that $\bz = (\zeta_1,\ldots,\zeta_d)^{\T} \in \R^d$ is a
random vector with independent components satisfying $\E(\zeta_j) = 0$, $j =
1,\ldots,d$.  Additionally, let $Q$ be a $d \times d$ positive
semidefinite matrix with real (non-random) entries.  Quadratic forms $\bz^{\T}Q\bz$ have been studied for decades in statistics \citep{sevastyanov1961class,whittle1964convergence,hanson1971bound,hall1984central,gotze1999asymptotic, gotze2002asymptotic,chatterjee2008new}.  This
paper is largely motivated by recent applications involving random-effects
models, which rely heavily on properties of quadratic
forms
\citep[e.g.][]{jiang2014high,campos2015genomic,dicker2015efficient}.
In the first part of the paper, we give two new finite sample bounds
for quadratic forms --- a uniform concentration inequality
(Theorem \ref{thm:QF_conc})
and a normal approximation result (Theorem \ref{thm:QF_napprox}) --- which may be useful in a variety of statistical
applications.  The second part of the paper focuses on applications of
Theorems \ref{thm:QF_conc}--\ref{thm:QF_napprox}
related to variance components estimation in linear random-effects
models, including non-standard models with correlated random effects
(cf. Proposition \ref{prop:RE_dconc}).  

Theorems \ref{thm:QF_conc} and \ref{thm:QF_napprox} are the main
theoretical results in the paper.  We rate the novelty of our normal
approximation result Theorem
\ref{thm:QF_napprox}, which is a multivariate normal approximation
results proved via Stein's method for exchangeable pairs, higher than that of Theorem \ref{thm:QF_conc}.
However, the main emphasis of both results is convenience for use in
applications.  

Our concentration bound, Theorem \ref{thm:QF_conc}, is a uniform version of the
Hanson-Wright inequality for quadratic forms.  The method of proof for Theorem
\ref{thm:QF_conc} is relatively standard -- combining a chaining
argument from empirical process theory \citep[e.g. Chapter 3
of][]{vandegeer2000empirical}  with
the pointwise-bound of the original Hanson-Wright inequality -- and it
should be possible to generalize our result to larger classes of quadratic forms,
similar to \citep{adamczak2014note}.  However, we note that while Theorem
\ref{thm:QF_conc} is restricted to relatively simple (Lipschitz) classes of
quadratic forms, it is not a corollary of the uniform bounds in \citep{adamczak2014note}, which require a stronger condition on the distribution of
$\bz$ (see the comments in Section \ref{sec:QF_conc} following the
statement of Theorem \ref{thm:QF_conc}).  

Theorem \ref{thm:QF_napprox} is a normal approximation
result for vectors of quadratic forms.  Most of the existing normal approximation
results for quadratic forms are asymptotic results
\citep{whittle1964convergence, hall1984central,jiang1996REML}, require
the random variables $\zeta_i$ to be iid   
\citep{hall1984central,gotze1999asymptotic,gotze2002asymptotic, chatterjee2008new}, or
have other limitations \citep{sevastyanov1961class}. Theorem \ref{thm:QF_napprox} gives a non-asymptotic normal
approximation bound, which applies
to $\bz$ with independent (but not necessarily identically
distributed) sub-Gaussian components. Furthermore, in contrast with
most existing results on quadratic forms, which are predominantly univariate, Theorem
\ref{thm:QF_napprox} is a multivariate result, which applies to
vectors of quadratic forms $(\bz^{\T}Q_1\bz,\ldots,\bz^{\T}Q_K\bz)$,
for positive semidefinite matrices $Q_1,\ldots,Q_K$ (the applications
to random-effects models considered in Section
\ref{sec:RE} require $K =2$).  The proof of Theorem
\ref{thm:QF_napprox} relies on Stein's method of exchangeable pairs
and follows the embedding approach of \cite{reinert2009multivariate}.
Theorem \ref{thm:QF_napprox} and its proof shares similarities
with Proposition 3.1 of \cite{chatterjee2008new}.  However,
Proposition 3.1 of \cite{chatterjee2008new} 
applies only to a single quadratic form $\bz^{\T}Q\bz$ in iid Rademacher random variables
$\zeta_i$ satisfying $\P(\zeta_i = 1) = \P(\zeta_i = -1) = 1/2$.  

Linear random-effects models are studied in Section \ref{sec:RE}. Asymptotic results for quadratic forms serve as the
theoretical underpinning for many applications involving
random-effects models \citep{hartley1967maximum,
  jiang1996REML, jiang1998asymptotic}.  However, new applications of
random-effects models in genomics have pushed the boundaries of
existing theoretical results
\citep{yang2010common,golan2011accurate,speed2012improved,zaitlen2012heritability,jiang2014high,yang2014advantages,campos2015genomic}.
In Section \ref{sec:RE}, we present new non-asymptotic bounds for
variance components estimators in linear random-effects models.  To
our knowledge, these are the first finite sample results on the
statistical properties of variance components estimators.
Many
now-classical asymptotic results for random-effects models
\citep[e.g.][]{jiang1996REML}
follow as corollaries of our finite sample results in Section \ref{sec:RE}.
More significantly, non-asymptotic bounds, like those in this paper, provide increased
flexibility for use in applications.  
In particular, our results can be easily
applied in non-standard settings, where it is less clear how to adapt
the existing asymptotic theory; see, for example Proposition
\ref{prop:RE_dconc}, which applies to
 random-effects models with correlated random-effects, and
\citep{dicker2015efficient} for an application involving fixed-effects
models.  

The rest of the paper proceeds as follows.  Some basic notation is
introduced in Section \ref{sec:notation}.  The main results are stated in Section \ref{sec:QF}.
Linear random-effects models are studied in Section \ref{sec:RE}.  The proofs of Theorems \ref{thm:QF_conc}--\ref{thm:QF_napprox}  and Propositions \ref{prop:RE_conc} and
\ref{prop:RE_napprox} are contained in the Appendices; other results
are proved in the Supplementary Material.  

\section{Notation}\label{sec:notation}

If $\u =(u_1,\ldots,u_p)^{\T}\in \R^p$, then $\Vert \u\Vert =
(u_1^2+\cdots + u_p^2)^{1/2}$ is its Euclidean norm.  For a $d \times m$ matrix $A = (a_{ij})$ with real entries, let $\Vert A \Vert = \sup_{\Vert
  \x\Vert = 1} \Vert A\x\Vert$ and $\Vert A
\Vert_{\HS} = \left\{\sum_{i = 1}^d \sum_{j=1}^m
  a_{ij}^2\right\}^{1/2}$ be the operator norm and the
Hilbert-Schmidt (Frobenius) norm of $A$, respectively.  If $f: \R^m \to \R$ is a function with $k$-th order derivatives, define
\[
\vert f\vert_j = \sup_{\substack{\x \in \R^m, \\  1\leq i_1,\ldots,i_j \leq m}}
\left\vert\frac{\partial^j}{\partial x_{i_1} \cdots \partial x_{i_j}}
  f(\x)\right\vert, \ \ j= 1,\ldots,k,
\]
and let $\vert f\vert_0 = \sup_{\x \in \R^m} \vert f(\x)\vert$.  
Additionally, define $\mathcal{C}_b^k(\R^m) = \{f: \R^m \to \R; \ \vert f \vert_j <
\infty, \ j = 0,1,\ldots, k\}$ to be the class of real-valued
functions on $\R^m$ with bounded derivatives up to order $k$.  
Finally, following \citep{vershynin2010introduction},
let $\Vert \zeta \Vert_{\psi_2} =
\sup_{r \geq1} r^{-1/2}\E(\vert \zeta \vert^r)^{1/r}$ be the
sub-Gaussian norm of the real-valued random variable $\zeta$.  

\section{Results for quadratic forms}\label{sec:QF}

\subsection{Uniform concentration bound}\label{sec:QF_conc}

The Hanson-Wright inequality is a classical probabilistic bound for
quadratic forms, which has been the subject of renewed attention
recently in applications related to random matrix theory
\citep[e.g.][]{hsu2012tail,rudelson2013hanson,adamczak2014note}.  Theorem
\ref{thm:QF_conc} is a uniform version of the Hanson-Wright
inequality, which applies to families of quadratic forms $\bz^{\T}Q(\u)\bz$, where 
$Q(\u)$ is a matrix function of $\u \in \R^K$.  As
illustrated in Section \ref{sec:RE}, Theorem \ref{thm:QF_conc} has
applications in the analysis of random-effects models; more broadly,
it has applications in 
$M$-estimation and maximum likelihood problems with non-iid data.  

\begin{thm}\label{thm:QF_conc}
Let $0 < R < \infty$ and let $t_1(\u),\ldots,t_m(\u)$ be real-valued Lipschitz
functions on $[0,R]^K \subseteq \R^K$, satisfying
\begin{equation}\label{lipschitz}
\max_{i = 1,\ldots,m} \vert t_i(\u) - t_i(\u')\vert \leq L\Vert \u
- \u'\Vert, \ \ \u,\u' \in [0,R]^K,
\end{equation}
for some constant $0 < L  < \infty$.  Let $T(\u) =
\diag\{t_1(\u),\ldots,t_m(\u)\}$, let $V$ be an $d \times m$
matrix, and define $Q(\u) = V T(\u) V^{\T}$. 
Additionally, let $\bz = (\zeta_1,\ldots,\zeta_d)^{\T} \in \R^d$, where
$\zeta_1,\ldots,\zeta_d$ are independent mean 0 sub-Gaussian random variables satisfying
\begin{equation}\label{subgauss}
\max_{i = 1,\ldots,d} \Vert \zeta_i \Vert_{\psi_2}
\leq \g
\end{equation}
for some constant $\g \in (0,\infty)$.
Then there
exists an absolute constant $C \in (0,\infty)$ such that 
\begin{align*}
\P\Bigg[ &\sup_{\u \in [0,R]^K} \vert \bz^{\T}Q(\u)\bz -
  \E\{\bz^{\T}Q(\u)\bz\}\vert > r\Bigg] \\ & \leq
  C\exp\left[-\frac{1}{C}\min\left\{\frac{r^2}{\g^4\Vert V^{\T}V\Vert^2(\Vert T(0)
  \Vert_{\HS}^2 +K L^2R^2m)},\frac{r}{\g^2\Vert V^{\T}V\Vert(\Vert T(0)\Vert + K^{1/2}LR)}\right\}\right],
\end{align*}
whenever $r^2 \geq C \g^4\Vert V^{\T}V\Vert^2K^3 L^2R^2m$.
\end{thm}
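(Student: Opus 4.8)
The plan is to combine a standard chaining argument from empirical process theory with the pointwise (i.e.\ fixed-$\u$) Hanson--Wright inequality.  First I would observe that for each fixed $\u \in [0,R]^K$, the quadratic form $\bz^{\T}Q(\u)\bz$ satisfies the classical Hanson--Wright tail bound, which (using $Q(\u) = VT(\u)V^{\T}$) gives sub-exponential concentration governed by $\g^2\Vert Q(\u)\Vert_{\HS}$ and $\g^2\Vert Q(\u)\Vert$; note $\Vert Q(\u)\Vert \le \Vert V^{\T}V\Vert\,\Vert T(\u)\Vert$ and $\Vert Q(\u)\Vert_{\HS} \le \Vert V^{\T}V\Vert\,\Vert T(\u)\Vert_{\HS}$, and the Lipschitz hypothesis \eqref{lipschitz} lets me control $\Vert T(\u)\Vert \le \Vert T(0)\Vert + K^{1/2}LR$ and $\Vert T(\u)\Vert_{\HS}^2 \le 2\Vert T(0)\Vert_{\HS}^2 + 2KL^2R^2m$ uniformly on the cube.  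This already handles the supremum at a single point.

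Next I would set up the chain.  Define the centered process $X_{\u} = \bz^{\T}Q(\u)\bz - \E\{\bz^{\T}Q(\u)\bz\}$ on the metric space $([0,R]^K, \Vert\cdot\Vert)$.  The key analytic input is an \emph{increment bound}: for $\u,\u' \in [0,R]^K$, write $X_{\u} - X_{\u'} = \bz^{\T}V(T(\u)-T(\u'))V^{\T}\bz - \E(\cdots)$, which is again a centered quadratic form whose matrix $V(T(\u)-T(\u'))V^{\T}$ has operator norm $\le \Vert V^{\T}V\Vert L\Vert\u-\u'\Vert$ and Hilbert--Schmidt norm $\le \Vert V^{\T}V\Vert L m^{1/2}\Vert\u-\u'\Vert$ (using that $T(\u)-T(\u')$ is diagonal with entries bounded by $L\Vert\u-\u'\Vert$).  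Applying Hanson--Wright to this difference gives a sub-exponential increment bound with Orlicz-type parameters proportional to $\g^2\Vert V^{\T}V\Vert L m^{1/2}\Vert\u-\u'\Vert$ (HS part) and $\g^2\Vert V^{\T}V\Vert L\Vert\u-\u'\Vert$ (operator part).  Since $[0,R]^K$ has covering numbers $N(\epsilon) \le (3R/\epsilon)^K$ (up to constants), I would run Dudley's entropy integral / a generic chaining bound (e.g.\ Chapter~3 of \cite{vandegeer2000empirical}, or the standard mixed-tail chaining lemma for processes with both sub-Gaussian and sub-exponential increments) to bound $\E\sup_{\u} |X_{\u} - X_{\u_0}|$ by a constant times $\g^2\Vert V^{\T}V\Vert L (m^{1/2} K^{1/2} R + K R)$, and to obtain the corresponding tail.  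Combining the chaining tail with the single-point bound at $\u_0 = 0$ via a union bound, then absorbing lower-order terms, yields the claimed two-regime (sub-Gaussian / sub-exponential) bound; the side condition $r^2 \ge C\g^4\Vert V^{\T}V\Vert^2 K^3 L^2 R^2 m$ is precisely what is needed to ensure the chaining contribution is dominated by the stated expression (so that the $\min$ of the two exponents captures the tail) and to guarantee $r$ exceeds the expected fluctuation so the bound is non-trivial.

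The main obstacle I anticipate is the bookkeeping in the chaining step: quadratic-form increments are sub-exponential rather than sub-Gaussian, so I cannot apply the plain Dudley bound but need a mixed-tail chaining argument that tracks two different moduli of continuity (the $\Vert\cdot\Vert_{\HS}$-driven Gaussian regime at small scales and the $\Vert\cdot\Vert$-driven exponential regime at larger scales), and then I must patch together the resulting two pieces with the pointwise Hanson--Wright bound in a way that produces exactly the $\min\{r^2/(\cdots), r/(\cdots)\}$ form with the right constants.  A secondary nuisance is verifying that all the geometric constants ($3^K$-type covering number factors, the powers of $K$) combine to give the stated $K$-dependence and that the threshold on $r^2$ is consistent; this is routine but must be done carefully.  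Everything else --- the reduction to Hanson--Wright at each scale, the norm estimates for $Q(\u)$ and its increments --- is mechanical given the Lipschitz hypothesis \eqref{lipschitz} and the sub-Gaussian hypothesis \eqref{subgauss}.
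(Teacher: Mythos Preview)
Your proposal is correct and follows essentially the same route as the paper: the paper also anchors at $\u_0=0$, applies the pointwise Hanson--Wright inequality there, and controls the oscillation by chaining with Hanson--Wright increment bounds on a dyadic grid over $[0,R]^K$ (exactly your operator/HS norm estimates for $V(T(\u)-T(\u'))V^{\T}$). The only cosmetic difference is that the paper carries out the chaining by hand---a fixed grid $\U_M^K$, a telescoping decomposition $\Delta_1+\Delta_2+\Delta_3$, a union bound at each scale with geometric level weights $s_j\propto(3/4)^j$, and finally $M\to\infty$---rather than invoking a packaged mixed-tail chaining lemma; this explicit computation is what produces the specific $K$-powers and the threshold $r^2\ge C\gamma^4\Vert V^{\T}V\Vert^2 K^3L^2R^2m$.
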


Theorem \ref{thm:QF_conc} is proved in Appendix \ref{app:QF_conc}.  In
a typical application, the dimension $K$ will be small (in Section
\ref{sec:RE}, we use Theorem \ref{thm:QF_conc} with $K=1$) and $m,d$
may be large. A
uniform Hanson-Wright inequality, with a similar upper bound, is also given in \citep{adamczak2014note}.  However, Adamczak's result
applies to random vectors satisfying a relatively strong concentration
property and does not cover sub-Gaussian random vectors satisfying
only \eqref{subgauss}; see Remark 4 following Theorem 2.3 in
\citep{adamczak2014note}.  

\subsection{Normal approximation}\label{sec:QF_napprox}

The main result of this section is Theorem \ref{thm:QF_napprox}, a multivariate normal approximation
result for vectors of quadratic forms
$(\bz^{\T}Q_1\bz,\ldots,\bz^{\T}Q_K\bz)^{\T} \in \R^K$.  Theorem \ref{thm:QF_napprox} may be viewed as a generalization of Proposition 3.1 in 
\citep{chatterjee2008new}, which applies to a single quadratic for $\bz^{\T}Q\bz$ in Rademacher random variables $\zeta_j$
satisfying $\P(\zeta_j = \pm 1) = 1/2$ (though our bound in Theorem
\ref{thm:QF_napprox} is not as tight as Chatterjee's; see the
discussion after the statement of the theorem).  A proof of Theorem
\ref{thm:QF_napprox} may be found in Appendix
\ref{app:QF_napprox}.  The proof is based on Stein's method of
exchangeable pairs and the embedding technique from \citep{reinert2009multivariate}.  

\begin{thm}\label{thm:QF_napprox}
Let $\zeta_1,\ldots,\zeta_d$ be independent sub-Gaussian random
variables with mean 0 and variance 1, and assume that they satisfy
\eqref{subgauss}.  Let $\bz = (\zeta_1,\ldots,\zeta_d)^{\T} \in
\R^d$.   Additionally, for $k = 1,\ldots,K$, let $Q_k= (q_{ij}^{(k)})$
be an $d \times d$
positive semidefinite matrix and let $\widecheck{Q}_k =
\diag(q_{11}^{(k)},\ldots,q_{dd}^{(k)})$.  Define $w_k = \bz^{\T}Q_k\bz - \tr(Q_k)$, 
$\widecheck{w}_k = \bz^{\T}\widecheck{Q}_k\bz - \tr(Q_k)$, and
\[
\w = \left[\begin{array}{c} w_1 \\ \widecheck{w}_1 \\ \vdots \\ w_K \\
             \widecheck{w}_K \end{array}\right] = \left[\begin{array}{c}
                                              \bz^{\T}Q_1\bz -
                                              \tr(Q_1) \\
                                              \bz^{\T}\widecheck{Q}_1\bz
                                              - \tr(Q_1) \\ \vdots \\
                                              \bz^{\T}Q_K\bz -
                                              \tr(Q_K) \\
                                              \bz^{\T}\widecheck{Q}_K\bz
                                              - \tr(Q_K)\end{array}\right]
                                              \in
\R^{2K}
\]
Finally, let $\z \sim N(0,I_{2K})$ and $V = \Cov(\w)$.  
There is an absolute constant $0 < C< \infty$ such that
\begin{align}\label{bd_napprox}
\big|\E \{f(\w)\} &- \E\{f(V^{1/2}\z)\}\big| \\ \nonumber & \leq
  C (\g + 1)^8\left\{K^{3/2}d^{1/2}\vert
  f\vert_2\left(\max_{k = 1,\ldots,K}\Vert Q_k\Vert\right)^2 + K^3d\vert f
  \vert_3\left(\max_{k = 1,\ldots,K}\Vert Q_k\Vert\right)^3\right\},
\end{align}
for all three-times differentiable functions $f:\R^{2K} \to \R$.    
\end{thm}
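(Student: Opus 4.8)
\noindent\emph{Proof proposal.} The plan is to run multivariate Stein's method for exchangeable pairs in the embedding formulation of \cite{reinert2009multivariate}, with an index-resampling pair, and to use the auxiliary coordinates $\widecheck w_k$ to make the linearity condition close exactly. Throughout, $C$ denotes an absolute constant whose value may change from line to line. First I would build the pair: let $\zeta_1',\dots,\zeta_d'$ be an independent copy of $\zeta_1,\dots,\zeta_d$, let $I$ be uniform on $\{1,\dots,d\}$ and independent of everything else, and set $\bz'=\bz+(\zeta_I'-\zeta_I)\mathbf{e}_I$ with $\mathbf{e}_I$ the $I$-th standard basis vector; let $\w'$ be built from $\bz'$ exactly as $\w$ is built from $\bz$, so $(\w,\w')$ is exchangeable. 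Writing $\Delta=\w'-\w$ and using $Q_k=Q_k^{\T}$, the coordinates of $\Delta$ attached to $Q_k$ are
\[
w_k'-w_k=2(\zeta_I'-\zeta_I)(Q_k\bz)_I+q_{II}^{(k)}(\zeta_I'-\zeta_I)^2,\qquad \widecheck w_k'-\widecheck w_k=q_{II}^{(k)}\bigl((\zeta_I')^2-\zeta_I^2\bigr).
\]

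Next I would verify the linearity condition. Averaging over $I$ and over the fresh coordinate (which has mean $0$ and variance $1$), and using $\sum_i\zeta_i(Q_k\bz)_i=\bz^{\T}Q_k\bz$ and $\sum_i q_{ii}^{(k)}\zeta_i^2=\bz^{\T}\widecheck Q_k\bz$, a short computation gives $\E[w_k'-w_k\mid\bz]=\tfrac1d(-2w_k+\widecheck w_k)$ and $\E[\widecheck w_k'-\widecheck w_k\mid\bz]=-\tfrac1d\widecheck w_k$. This is the one place where carrying the $\widecheck w_k$ coordinates is essential: the drift of $w_k$ alone does not close, but after the \cite{reinert2009multivariate} embedding it does, giving $\E[\Delta\mid\w]=-\L\w$ with $\L=\tfrac1d\L_0$, where $\L_0$ is block diagonal with $K$ identical $2\times2$ blocks, each having first row $(2,-1)$ and second row $(0,1)$. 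This $\L$ is invertible, the relevant functionals $\lambda^{(i)}$ of $\L^{-1}$ entering the \cite{reinert2009multivariate} bound ($\ell_1$-norms of its rows/columns, scaled by $d$) satisfy $\lambda^{(i)}\le\tfrac32 d$, and by exchangeability one automatically has $\L V+V\L^{\T}=\E[\Delta\Delta^{\T}]$ with $V=\Cov(\w)$, so $N(0,V)$ is indeed the correct Gaussian target. Plugging $(\w,\w')$ and $\L$ into the smooth-function bound of \cite{reinert2009multivariate} (its remainder term vanishing since the linearity is exact) bounds the left-hand side of \eqref{bd_napprox} by $C(|f|_2A+|f|_3B)$, where, with $i,j,k$ ranging over the $2K$ coordinates,
\[
A=\sum_i\lambda^{(i)}\Bigl(\sum_j\Var\bigl(\E[\Delta_i\Delta_j\mid\w]\bigr)\Bigr)^{1/2},\qquad B=\sum_{i,j,k}\lambda^{(i)}\,\E\bigl|\Delta_i\Delta_j\Delta_k\bigr|.
\]

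Bounding $B$ is routine. Each coordinate of $\Delta$ is a low-degree polynomial in $\zeta_I,\zeta_I'$ with coefficients $2(Q_k\bz)_I$ or $q_{II}^{(k)}$; since $(Q_k\bz)_i=\sum_j q_{ij}^{(k)}\zeta_j$ has sub-Gaussian norm at most $C\g\Vert Q_k\Vert$, since $|q_{ii}^{(k)}|\le\Vert Q_k\Vert$ (as $Q_k$ is positive semidefinite), and since $\Vert\zeta_i\Vert_{\psi_2}\le\g$, a generalized H\"older inequality bounds $\E[\,|\Delta_i\Delta_j\Delta_k|\mid\bz,\,I=i_0]$ by $C(\g+1)^{6}(\max_k\Vert Q_k\Vert)^{3}$ uniformly in $i_0$; averaging over $I$ absorbs the factor $d^{-1}\sum_{i_0}$, and summing over the at most $(2K)^3$ index triples with $\lambda^{(i)}\le\tfrac32 d$ gives $B\le CK^3d(\g+1)^{6}(\max_k\Vert Q_k\Vert)^{3}$, which is the $|f|_3$ term.

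The term $A$ is the crux and the main obstacle. From the two conditional expectations above, $\E[\Delta_i\Delta_j\mid\bz]$ is $d^{-1}$ times a polynomial in $\bz$ of degree at most $4$: its degree-$2$ part is $4d^{-1}\bz^{\T}Q_aQ_b\bz$ (with $Q_a,Q_b$ attached to coordinates $i,j$), up to lower-order pieces, while the degree-$3$ and degree-$4$ parts are polynomial chaoses whose coefficient arrays involve only the diagonal of one $Q$-matrix and a single row of another. Using $\Var(\bz^{\T}M\bz)\le C(\g+1)^4\Vert M\Vert_{\HS}^2$ for independent sub-Gaussian $\bz$, the analogous moment bounds for the degree-$3$ and degree-$4$ chaoses (which bring in moments of the $\zeta_i$ up to order $8$, hence the power $8$), and $\Vert Q_aQ_b\Vert_{\HS}^2\le\Vert Q_a\Vert^2\Vert Q_b\Vert_{\HS}^2\le d(\max_k\Vert Q_k\Vert)^4$ together with the analogous estimates for the remaining coefficient arrays, one obtains $\Var(\E[\Delta_i\Delta_j\mid\w])\le Cd^{-1}(\g+1)^8(\max_k\Vert Q_k\Vert)^4$. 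Summing over the $2K$ values of $j$, taking square roots, and summing over $i$ with $\lambda^{(i)}\le\tfrac32 d$ yields $A\le CK^{3/2}d^{1/2}(\g+1)^4(\max_k\Vert Q_k\Vert)^2$, which is the $|f|_2$ term; combining the two estimates and replacing every power of $\g+1$ by $(\g+1)^8$ gives \eqref{bd_napprox}. The only genuinely delicate step is this variance bound: one must expand $\E[\Delta_i\Delta_j\mid\bz]$, recognize it as a bounded-degree polynomial chaos in the $\zeta_i$, and control the Hilbert--Schmidt norms of its coefficient arrays in terms of $d$ and $\max_k\Vert Q_k\Vert$; everything else is the standard exchangeable-pairs machinery together with sub-Gaussian moment bookkeeping.
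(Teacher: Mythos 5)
Your proposal is correct and follows essentially the same route as the paper: the same index-resampling exchangeable pair, the same linearity identity $\E[\w'-\w\mid\bz]=-\L\w$ with the identical block matrix, and the same two error terms --- the paper's $S_1$ and $S_2$ correspond exactly to your $A$ and $B$, with the variance bound $\Var(\E[\Delta_i\Delta_j\mid\bz])\leq Cd^{-1}(\g+1)^{8}(\max_k\Vert Q_k\Vert)^4$ being precisely the content of the paper's Lemma \ref{lemma:Tbd}. The only cosmetic difference is that you invoke the Reinert--R\"ollin smooth-function theorem as a black box while the paper re-derives it via the Stein identity and a Taylor expansion; you also correctly identify the conditional-second-moment variance bound as the one genuinely delicate step.
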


The upper-bound \eqref{bd_napprox} does not appear to be optimal; cf.
Section 5 of \citep{jiang1996REML} and Section 3 of
\citep{chatterjee2008new}, where conditions for convergence depend on
the ratios $\Vert Q_k - \widecheck{Q}_k\Vert^2/\Var(\bz^{\T}Q_k\bz)$ and $\tr(Q_k^4)/$ $\Var(\bz^{\T}Q_k\bz)^2$, respectively. However, it is likely
that \eqref{bd_napprox} can be improved by carefully examining the
proof in the Appendix, if one is willing to accept a more complex (and potentially less
user-friendly) bound. 
Moreover, we argue presently that the bound \eqref{bd_napprox} is
already effective in a range of
practical settings.  Assume that in addition to the conditions of Theorem
\ref{thm:QF_napprox}, the $\zeta_i$ are iid with excess kurtosis 
$\g_2 = \E(\zeta_i) - 3 \geq -2$.  Also, let $\s_k^2 = \Var(\bz^{\T}Q_k\bz)$. By Lemma \ref{lemma:vqf} from the Supplementary Material, 
\[
\s_k^2 = 2\tr(Q_k^2) + \g_2\tr(\widecheck{Q}^2_k) \geq (2 +
\g_2)\tr(Q_k^2).  
\] 
Hence, the upper-bound in Theorem
\ref{thm:QF_napprox} implies that $\bz^{\T}Q_k\bz/\s_k$ is
asymptotically $N(0,1)$, if 
\[
\frac{d^{1/2}\Vert Q_k\Vert^2}{\s_k^2} + \frac{d\Vert
  Q_k\Vert^3}{\s_k^3} \leq \frac{d^{1/2}\Vert Q_k\Vert^2}{(2+\g_2)\tr(Q_k^2)} + \left\{\frac{d^{2/3}\Vert
  Q_k\Vert^2}{(2+\g_2)\tr(Q_k^2)}\right\}^{3/2}\to 0.
\]
We conclude that if (i) $\liminf \g_2 > -2$ and (ii) $\Vert
Q_k\Vert^2/\tr(Q_k^2) = o(d^{-2/3})$, then $\bz^{\T}Q_k\bz/\s_k$ is
asymptotically $N(0,1)$.  Regarding (i), note that $\g_2 > -2$ for all distributions
except the Rademacher distribution; furthermore, (ii) holds if, for instance, all of the
eigenvalues of $Q_k$ are contained in a compact subset of
$(0,\infty)$.

\section{Linear random-effects models} \label{sec:RE}

In this section, we apply the results from Section \ref{sec:QF} to the
variance components estimation problem in a linear random-effects
model.  We assume that
\begin{equation}\label{LM}
\y = X\bb + \be, 
\end{equation}
where $\y = (y_1,\ldots,y_n)^{\T} \in \R^n$ is an observed $n$-dimensional outcome
vector, $X = (x_{ij})$ is an observed $n \times p$ predictor matrix
with $\x_{ij} \in \R^p$, $\bb = (\b_1,\ldots,\b_p)^{\T}\in \R^p$ is an unknown $p$-dimensional vector, and $\be = (\e_1,\ldots,\e_n)^{\T} \in\R^n$ is an unobserved
error vector.  We further assume that $\b_1,\ldots,\b_p,\e_1,\ldots, \e_n$ are independent random
variables with 
\begin{equation}\label{RE}
\begin{array}{l}
\E(\e_i) = 0 \mbox{ and }\Var(\e_i) = \s_0^2, \ \ i = 1,\ldots,n,
  \\
\E(\b_j) = 0 \mbox{ and } \Var(\b_j) = \dfrac{\s_0^2\eta_0^2}{p}, \ \ j
  = 1,\ldots,p.
\end{array}
\end{equation}
Here, we  assume that the $\b_j$ are all independent. In Section
\ref{sec:RE_dconc}, we investigate a more general model with
dependent random-effects and give a corresponding concentration bound.
Throughout, we also assume that $X$ is independent
of $\be$ and $\bb$.  
Overall,
\eqref{LM}--\eqref{RE} is a linear random-effects model with variance components parameters
$\bth_0 = (\s_0^2,\eta_0^2)$.  Observe that we have parametrized the model so that
$\eta_0^2$ is a measure of the signal-to-noise ratio; this
parametrization is standard \citep[e.g.][]{hartley1967maximum}.

Let $\bth = (\s^2,\eta^2)$ and define the Gaussian data
log-likelihood, 
\[
\ell(\bth) = - \frac{1}{2}\log (\s^2)-\frac{1}{2n}\log \det\left(\frac{\eta^2}{p}XX^{\T} +
   I\right) - \frac{1}{2\s^2n}\y^{\T}\left(\frac{\eta^2}{p}XX^{\T} +
   I\right)^{-1}\y .
\]
Note that $\ell(\bth)$ is the log-likelihood for $\bth$, if $\bb \sim
N\{0,(\eta_0^2\s_0^2/p)I\}$ and $\be \sim N(0,\s_0^2I)$ are Gaussian.
In this section, we study properties of the
maximum likelihood estimator (MLE),
\begin{equation}\label{MLE}
\hat{\bth} = (\hat{\s}^2,\hat{\eta}^2) = \argmax_{\s^2,\eta^2 \geq 0} \ell(\bth),
\end{equation}
in settings where $\be$ and $\bb$ are not necessarily Gaussian. 
[N.B. if $\ell(\bth)$ in \eqref{MLE} has multiple maximizers, then use any
pre-determined rule to select $\hat{\bth}$.]

The estimator $\hat{\bth}$ has already been widely studied in the
literature, even in settings where $\be$ and $\bb$ are not Gaussian
\citep[e.g.][]{richardson1994asymptotic, jiang1996REML}.
In practice, $\hat{\bth}$ and other closely related estimators, such
as REML estimators, are probably the most commonly used variance
components estimators for linear random-effects models
\citep{harville1977maximum,searle1992variance,demidenko2013mixed}.
\citepos{jiang1996REML} work is especially relevant for the results in
this section.  Jiang studied models with independent random-effects
and derived general consistency and asymptotic normality results for
$\hat{\bth}$ that are valid in some of the settings considered here.
However, asymptotic results tend to have more limited flexibility for
use in certain applications. This
has become more notable recently, with the widespread use of
random-effects models in genomic and
other 
applications, as discussed in Section \ref{sec:intro}.  

In Sections
\ref{sec:RE_conc}--\ref{sec:RE_napprox} below, we present finite
sample concentration and
normal approximation bounds for $\hat{\bth}$, which follow from
Theorem \ref{thm:QF_conc} and \ref{thm:QF_napprox}, respectively.
These bounds have not been optimized and some of the quantities in
the bounds can be extremely large for given values of $\bth_0$ and $p^{-1}XX^{\T}$ [e.g. $\kappa(\eta_0^2,\s_0^2,\L)^{-1}$ and
$\nu(\eta_0^2,\s_0^2,\L)$, defined in \eqref{kappa} and \eqref{nu}
below].  However, as described in the text below, Propositions \ref{prop:RE_conc} and \ref{prop:RE_napprox} still yield the ``correct''
asymptotic conclusions, similar to \citep{jiang1996REML}, which ensure consistency and asymptotic
normality of $\hat{\bth}$, if $p/n \to \rho \in (0,\infty)$ and the model
parameters are bounded.  Though it may be of interest to further
optimize Propositions
\ref{prop:RE_conc}--\ref{prop:RE_napprox} (and it is almost certainly
possible), our main emphasis is that the non-asymptotic approach taken
here provides additional flexibility for deriving  and understanding results
in less standard settings.   For instance, while Propositions
\ref{prop:RE_conc} and \ref{prop:RE_napprox} parallel existing results in \citep{jiang1996REML},  Proposition
\ref{prop:RE_dconc} is a concentration bound for linear
models with correlated random-effects and appears to be more novel [an
application of Proposition \ref{prop:RE_dconc} may be found in
\citep{dicker2015efficient}].

\subsection{Additional notation}

It is convenient to introduce some notation relating to the spectrum
of $X$.  Let $\l_1 \geq \cdots \l_n \geq 0$ be the eigenvalues of
$p^{-1}XX^{\T}$ and suppose that $p^{-1}XX^{\T} = U\L U^{\T}$ is the
eigen-decomposition of $p^{-1}XX^{\T}$, where
$\L = \diag(\l_1,\ldots,\l_n)$ and
$U$ is an $n \times n$ orthogonal matrix.  Let $n_0 = \max\{i; \ \l_i
> 0\}$ and define the empirical variance of the eigenvalues of $p^{-1}XX^{\T}$, 
\begin{equation}\label{vfrak}
\vfrak(\L) = \frac{1}{n}\sum_{i = 1}^n \l_i^2 - \left(\frac{1}{n}\sum_{i
    = 1}^n \l_i\right)^2 =
\frac{1}{n}\tr\left\{\left(\frac{1}{p}XX^{\T}\right)^2\right\} - \left\{\frac{1}{n}\tr\left(\frac{1}{p}XX^{\T}\right)\right\}^2.
\end{equation} 
\subsection{Concentration bound}\label{sec:RE_conc}

To derive a concentration bound for $\hat{\bth}$ (Proposition
\ref{prop:RE_conc} below), we follow standard steps in the
analysis of variance components estimators
\citep{hartley1967maximum}.  In
particular, we introduce the profile likelihood and other
related objects, which essentially reduce the bivariate optimization
problem \eqref{MLE} to a univariate problem.  Basic calculus implies that if $\eta^2 \geq 0$, then
\[
\max_{\s^2,\eta^2 \geq 0} \ell(\s^2,\eta^2) =
\max_{\eta^2 \geq 0} \ell_*(\eta^2),
\]
where $\ell_*(\eta^2) = \ell\{\s_*^2(\eta^2),\eta^2\}$ is called the profile
likelihood and
\[
\s_*^2(\eta^2) = \frac{1}{n}\y^{\T}\left(\frac{\eta^2}{p}XX^{\T} + I\right)^{-1}\y.
\]
It follows that $\hat{\bth} =
(\s_*^2(\hat{\eta}^2),\hat{\eta}^2)$, where 
\begin{equation}\label{profile}
\hat{\eta}^2 = \argmax_{\eta^2 \geq 0} \ell_*(\eta^2) .
\end{equation}
The proof
of Proposition \ref{prop:RE_conc} hinges on comparing the profile
likelihood $\ell_*(\eta^2)$ to
its population version,
\[
\ell_0(\eta^2)  = -\frac{1}{2}\log\{\s_0^2(\eta^2) \} -
  \frac{1}{2n}\log\det\left(\frac{\eta^2}{p}XX^{\T} + I\right) -
  \frac{1}{2}\log(\s_0^2) - \frac{1}{2},
\]
where we have replaced $\s_*^2(\eta^2)$ in $\ell_*^2(\eta^2)$ with its
expectation, 
\[
\s_0^2(\eta^2) = \E\{\s_*^2(\eta^2)\vert X\}=\frac{\s_0^2}{n}\tr\left\{\left(\frac{\eta_0^2}{p}XX^{\T}
                 + I\right)\left(\frac{\eta^2}{p}XX^{\T} +
                 I\right)^{-1}\right\} = \frac{\s_0^2}{n}\sum_{i =
               1}^n \frac{\eta_0^2\l_i+1}{\eta^2\l_i + 1}.
\]
Observe that $\s_0^2(\eta_0^2) = \s_0^2$.  

Overall, our strategy for proving Proposition \ref{prop:RE_conc} mirrors the classical parametric theory for consistency of
maximum likelihood and
$M$-estimators \citep[e.g. Chapter 5 of][]{vandervaart2000asymptotic},
except that we employ Theorem \ref{thm:QF_conc} at several key steps.
As in the standard analysis, two important facts underlying Proposition \ref{prop:RE_conc}   are (i) $\eta^2_0$ is the unique maximizer
of $\ell_0(\eta^2)$ and (ii) $\ell_*(\eta^2) \approx \ell_0(\eta^2)$,
when $n,p$ are large.  Theorem \ref{thm:QF_conc} is used to make
the approximation $\ell_*(\eta^2) \approx \ell_0(\eta^2)$ more
precise.  It should not be surprising that quadratic forms play an
important role in the analysis, given the dependence of
$\ell_*(\eta^2) = \ell\{\s_*^2(\eta^2),\eta^2\}$ on the quadratic form
$\s_*^2(\eta^2)$.  We emphasize that to prove
Proposition \ref{prop:RE_conc}, we use Theorem \ref{thm:QF_conc} with
$K=1$ and $\u = \eta^2$; the general version of Theorem \ref{thm:QF_conc} with
matrix functions defined on $\R^K$ may be useful for studying
random-effects model with $K$-groups of random-effects, e.g. the
general linear random-effects model considered in \citep{jiang1996REML}.  Proposition \ref{prop:RE_conc} is proved in
Appendix \ref{app:RE_conc}.

\begin{prop}\label{prop:RE_conc}
Assume that the linear random-effects model \eqref{LM}--\eqref{RE} holds and
that $\b_1,\ldots,\b_p$, $\e_1,\ldots,\e_n$
are independent sub-Gaussian random variables satisfying
\begin{equation}\label{subgbd}
\max\left\{\Vert p^{1/2}\b_j\Vert_{\psi_2}, \ \Vert \e_i\Vert_{\psi_2};
  \ i = 1,\ldots,n, \ j = 1,\ldots, p\right\} \leq \g
\end{equation}
for some $0 < \g < \infty$.  
Finally, define
\begin{equation}\label{kappa}
\kappa(\s_0^2,\eta_0^2,\L) =
\frac{\s_0^4\eta_0^8\vfrak(\L)^2}{(\s_0^2+1)^5(\eta_0^2+1)^{12}
  (\l_1+1)^{18}(\l_{n_0}^{-1}+1)^8\{\vfrak(\L)+1\}^2}. 
\end{equation}  
\begin{itemize}
\item[(a)] Suppose that $n_0 = n$.  There is an absolute constant $0 <
  C < \infty$ such that
\[
\P\left\{\left.\Vert \hat{\bth} - \bth_0\Vert > r\right\vert X\right\}
\leq C\exp\left\{-\frac{n}{C}\cdot
  \frac{\kappa(\s_0^2,\eta_0^2,\L)}{\g^2(\g+1)^2}\cdot \frac{r^2}{(r+1)^2}\right\}
\]
for every $r \geq 0$.  
\item[(b)] Suppose that $n_0 < n$.  There is an absolute constant $0 <
  C < \infty$ such that
\[
\P\left\{\left.\Vert \hat{\bth} - \bth_0\Vert > r\right\vert X\right\}
\leq C\exp\left\{-\frac{n}{C} \cdot \frac{\kappa(\s_0^2,\eta_0^2,\L)}{\g^2(\g+1)^2}\cdot
  \left(1 - \frac{n_0}{n}\right)^4\left(\frac{n_0}{n}\right)^2\cdot\frac{r^2}{(r+1)^2}\right\}
\]
for every $r \geq 0$.  
\end{itemize}
\end{prop}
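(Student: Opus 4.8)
The plan is to follow the classical $M$-estimation route for consistency, substituting Theorem \ref{thm:QF_conc} at the step where one controls the fluctuations of the empirical profile likelihood around its population version. First I would reduce $\Vert\hat\bth-\bth_0\Vert$ to control of $\vert\hat\eta^2-\eta_0^2\vert$: since $\hat\bth=(\s_*^2(\hat\eta^2),\hat\eta^2)$ and $\s_*^2(\cdot)$ is a smooth function, and since $\s_0^2(\eta_0^2)=\s_0^2$, a bound on $\vert\hat\eta^2-\eta_0^2\vert$ together with a uniform bound on $\vert\s_*^2(\eta^2)-\s_0^2(\eta^2)\vert$ and the Lipschitz continuity of $\eta^2\mapsto\s_0^2(\eta^2)$ will give a bound on $\Vert\hat\bth-\bth_0\Vert$. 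So it suffices to show that with high probability $\hat\eta^2$ is close to $\eta_0^2$, and the denominators $(\l_1+1)$, $(\l_{n_0}^{-1}+1)$ in \eqref{kappa} are exactly what one needs to make these Lipschitz/curvature constants explicit. A preliminary truncation argument will be needed to restrict the optimization in \eqref{profile} to a compact interval $\eta^2\in[0,M]$ for suitable $M$, since a priori $\hat\eta^2$ could be large; this is where the distinction between $n_0=n$ and $n_0<n$ enters, because when $n_0<n$ the likelihood behaves differently on the null eigenspace and one loses the factors $(1-n_0/n)^4(n_0/n)^2$.

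Next I would establish the two ``population'' facts. Fact (i): $\eta_0^2$ is the unique maximizer of $\ell_0(\eta^2)$, with a quantitative lower bound on the curvature — i.e. $\ell_0(\eta_0^2)-\ell_0(\eta^2)\geq c(\eta^2-\eta_0^2)^2/(\cdots)$ on the relevant compact set, where the constant is controlled below by $\vfrak(\L)$ (the variance of the eigenvalues); if $\vfrak(\L)=0$ all eigenvalues coincide, $\eta^2$ is unconfounded with $\s^2$, and identifiability degrades, which is why $\vfrak(\L)^2$ sits in the numerator of $\kappa$. This is a deterministic computation: write $\ell_0(\eta^2)=-\tfrac12\log\{\s_0^2(\eta^2)/\s_0^2\}-\tfrac1{2n}\sum_i\log(\eta^2\l_i+1)-\tfrac12$ and differentiate twice in $\eta^2$, bounding the resulting sums over $\l_i$ using $\l_{n_0}\leq\l_i\leq\l_1$. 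Fact (ii): the empirical profile log-likelihood satisfies $\sup_{\eta^2\in[0,M]}\vert\ell_*(\eta^2)-\ell_0(\eta^2)\vert$ small with high probability. Since $\ell_*(\eta^2)-\ell_0(\eta^2)=-\tfrac12\log\{\s_*^2(\eta^2)/\s_0^2(\eta^2)\}$ and $\log$ is Lipschitz away from $0$, it suffices to control $\sup_{\eta^2\in[0,M]}\vert\s_*^2(\eta^2)-\s_0^2(\eta^2)\vert$. Now $\s_*^2(\eta^2)=\tfrac1n\y^\T(\tfrac{\eta^2}p XX^\T+I)^{-1}\y$, and writing $\y=X\bb+\be$ and stacking $(\bb,\be)$ into a single vector $\bz$ with independent mean-zero sub-Gaussian coordinates (rescaling so that the coordinates satisfy \eqref{subgauss} with the constant $\g$ from \eqref{subgbd}), we get $\s_*^2(\eta^2)=\bz^\T Q(\eta^2)\bz$ for an explicit $Q(\eta^2)=VT(\eta^2)V^\T$ with $V$ built from $X$ and $I$, and $T(\eta^2)=\diag\{t_i(\eta^2)\}$, $t_i(\eta^2)=\tfrac1n(\eta^2\l_i+1)^{-1}$ on the eigenbasis. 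One checks $\vert t_i(\eta^2)-t_i(\eta'^2)\vert\leq L\vert\eta^2-\eta'^2\vert$ with $L$ of order $\l_1/n$ (conditional on $X$), so Theorem \ref{thm:QF_conc} applies with $K=1$, yielding a sub-exponential tail for $\sup_{\eta^2\in[0,M]}\vert\s_*^2(\eta^2)-\s_0^2(\eta^2)\vert$ whose scale is governed by $\Vert V^\T V\Vert$, $\Vert T(0)\Vert_{\HS}$, $L$, $M$, $\g$ — all of which can be bounded in terms of $\l_1$, $n$, $\eta_0^2$, $\s_0^2$.

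Then I would run the standard argmax argument: on the event where Fact (ii)'s deviation is at most $\tfrac12[\ell_0(\eta_0^2)-\ell_0(\eta^2)]$ for the relevant $\eta^2$, the inequality $\ell_*(\hat\eta^2)\geq\ell_*(\eta_0^2)$ forces $\ell_0(\eta_0^2)-\ell_0(\hat\eta^2)\leq 2\sup\vert\ell_*-\ell_0\vert$, and combining with the curvature lower bound from Fact (i) gives $(\hat\eta^2-\eta_0^2)^2\lesssim\sup\vert\ell_*-\ell_0\vert\cdot(\text{constants})$. Feeding the sub-exponential tail from Theorem \ref{thm:QF_conc} through this inequality and converting from a tail on $\sup\vert\ell_*-\ell_0\vert$ to a tail on $\vert\hat\eta^2-\eta_0^2\vert$, then back to $\Vert\hat\bth-\bth_0\Vert$ via the first paragraph, produces a bound of the form $C\exp\{-(n/C)\cdot\kappa(\s_0^2,\eta_0^2,\L)/[\g^2(\g+1)^2]\cdot r^2/(r+1)^2\}$; the $r^2/(r+1)^2$ shape arises because the min in Theorem \ref{thm:QF_conc} transitions from Gaussian to exponential regime, i.e. for small $r$ the squared term dominates and for large $r$ the linear term does, and one is comparing a deviation $r$ in $\bth$ against the curvature which itself scales like $r^2$. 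The main obstacle, and the part requiring genuine care rather than routine bookkeeping, is the deterministic curvature estimate in Fact (i) together with the matching a priori compactification of the search region: one must extract the precise powers of $(\s_0^2+1)$, $(\eta_0^2+1)$, $(\l_1+1)$, $(\l_{n_0}^{-1}+1)$ and $\{\vfrak(\L)+1\}$ appearing in $\kappa$, and ensure the lower bound on the curvature does not degenerate at the endpoints of the compact interval — the worst behavior is near $\eta^2=0$ and near the upper truncation level $M$, which is why the bound depends on the full range of the spectrum through both $\l_1$ and $\l_{n_0}$. When $n_0<n$, the null eigenvalues contribute no information about $\eta_0^2$, shrinking the effective sample size and the curvature by a factor of order $(1-n_0/n)^4(n_0/n)^2$, which must be tracked separately in part (b).
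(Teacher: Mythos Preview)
Your overall architecture is close to the paper's, but there is a genuine gap in the rate you would obtain in $r$. Running the argmax inequality $\ell_0(\eta_0^2)-\ell_0(\hat\eta^2)\leq 2\sup_{\eta^2}\vert\ell_*(\eta^2)-\ell_0(\eta^2)\vert$ against a quadratic curvature lower bound $\ell_0(\eta_0^2)-\ell_0(\eta^2)\gtrsim (\eta^2-\eta_0^2)^2$ means that to conclude $\vert\hat\eta^2-\eta_0^2\vert>r$ you need the uniform deviation to exceed something of order $r^2$. Feeding $s\asymp r^2$ into a Hanson--Wright-type tail $\exp\{-cn\min(s^2,s)\}$ gives $\exp\{-cnr^4\}$ for small $r$, not the $\exp\{-cnr^2\}$ required by the stated bound $r^2/(r+1)^2$. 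Your explanation that the $r^2/(r+1)^2$ shape comes from the Gaussian/exponential transition in Theorem~\ref{thm:QF_conc} is not quite right; that transition governs each concentration lemma, but the final $r$-dependence is determined by how the deviation threshold scales with $r$.

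The paper fixes this by splitting $\{\vert\hat\eta^2-\eta_0^2\vert>r'\}$ into a ``small'' piece $A^-$ (where $\vert\hat\eta^2-\eta_0^2\vert$ is at most a fixed multiple of $\eta_0^2$) and a ``large'' piece $A^+$. On $A^+$ the curvature bound (Lemma~\ref{lemma:llik_cty}) is bounded below by a constant, so the argmax argument yields an $r$-independent bound, which handles large $r$. On $A^-$ the paper switches to the \emph{profile score} $H_*(\eta^2)=2\s_*^2(\eta^2)\,\ell_*'(\eta^2)$: its population version satisfies $\vert H_0(\eta^2)\vert\gtrsim \vert\eta^2-\eta_0^2\vert\,\vfrak(\L)$ (linear, not quadratic, in the deviation; Lemma~\ref{lemma:H_bd}), and since $H_*(\hat\eta^2)=0$ on $A^-$ one gets $\vert\hat\eta^2-\eta_0^2\vert\lesssim \sup_{\eta^2}\vert H_*(\eta^2)-H_0(\eta^2)\vert$. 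A second application of Theorem~\ref{thm:QF_conc} to this new family of quadratic forms (Lemma~\ref{lemma:sld}) then gives the correct $\exp\{-cnr^2\}$ tail for small $r$. Your proposal should therefore add a score-equation step for the local regime; the likelihood comparison alone is only used for the global (large-$r$) regime. Separately, the paper avoids an explicit truncation to $[0,M]$ by proving the uniform bounds directly on $[0,\infty)$, splitting at $\eta^2=1$ and reparametrizing $\eta^2\mapsto\eta^{-2}$ on $[1,\infty)$ to restore compactness; your compactification idea could be made to work but is not how the paper proceeds.
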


For given values of $\s_0^2$, $\eta_0^2$ and $\L$, the quantity
$\kappa(\s_0^2,\eta_0^2,\L)$ in Proposition \ref{prop:RE_conc} may be
extremely small. We have not attempted to optimize
$\kappa(\s_0^2,\eta_0^2,\L)$, and the bounds
in the proposition can almost certainly 
be improved at the expense of some additional calculations and a more
complex bound.  However, despite the magnitude of
$\kappa(\s_0^2,\eta_0^2,\L)$, the proposition yields very sensible asymptotic
conclusions.  Indeed, the key property of $\kappa(\s_0^2,\eta_0^2,\L)$ is that if $\U \subseteq (0,\infty)$ is compact, then
\begin{equation}\label{inf}
0 < \inf
\left\{\kappa(\s_0^2,\eta_0^2,\L); \
  \s_0^2,\eta_0^2,\l_1,\ldots,\l_n,\vfrak(\L) \in \U\right\}.
\end{equation}
An immediate consequence is that if $\s_0^2,\eta_0^2,\l_1,\ldots,\l_n,\vfrak(\L)$ are contained in a compact subset
of $(0,\infty)$, then Proposition \ref{prop:RE_conc} implies that
$\hat{\bth}$ converges to $\bth_0$ at rate $n^{1/2}$ [at least 
when $n = n_0$; if $n_0 <n$, then part (b) of the proposition requires
the additional condition that $n_0/n$ stays away from 1 --- this is
discussed further below].  

The bounds in Proposition \ref{prop:RE_conc} are tighter
[i.e. $\kappa(\s_0^2,\eta_0^2,\L)$ is larger] when the
eigenvalue variance $\vfrak(\L)$ is large.  This is
 related to identifiability: 
 $\s_0^2$ and $\eta_0^2$ are not identifiable when $\vfrak(\L) = 0$,
 and it is easier to distinguish between them when $\vfrak(\L)$ is large. 

The cases where $n_0 = n$ and $n_0 < n$ are considered separately in Proposition \ref{prop:RE_conc} because the large-$\eta^2$ asympotic behavior of
$\s_0^2(\eta^2) = \E\{\s_*^2(\eta^2)\vert X\}$ differs in these two settings.  In
particular,  if $n_0 = n$,
then $\s_0^2(\eta^2) \asymp \eta^2$ as $\eta^2 \to
\infty$; on the other hand, if $n_0 < n$, then $\s_0^2(\eta^2) \asymp
(1 - n_0/n)$ as $\eta^2 \to \infty$.   

Note that Proposition \ref{prop:RE_conc} (a) actually makes no
explicit reference to $p$, or to the relative
convergence rates of $p$ and $n$.  However, there are implicit conditions on $p$.
For instance, since $n_0 = n$ in part (a), we must have $n \leq p$.  Additionally, in order 
ensure that $\l_1,\ldots,\l_n$ are contained in a compact subset 
$\U \subseteq (0,\infty)$, so that \eqref{inf} holds, it may be
natural to enforce other conditions on $p$, e.g. $p/n \to \rho \in
(1,\infty)$.

Part (b) of Proposition \ref{prop:RE_conc} applies to
settings where $p <n$.  Note that the upper bound in part (b) contains
an additional term $(1 - n_0/n)^4(n_0/n)^2$, as compared to
Proposition \ref{prop:RE_conc} (a).   Thus, assuming that
$\s_0^2,\eta_0^2,\l_1,\ldots,\l_n,\vfrak(\L)$ are contained in a
compact subset of $(0,\infty)$, 
  we conclude that $\hat{\bth}$ converges to $\bth_0$ at rate
  $n^{1/2}$, if
\begin{equation}
\label{liminf} \liminf \left(1 -
\frac{n_0}{n}\right)\frac{n_0}{n} > 0.
\end{equation}  
Observe that \eqref{liminf} implies $p \to \infty$.  Hence, we need $p
\to \infty$ in order to ensure that $\hat{\bth}$ is consistent. This is reasonable because information about
$\eta^2_0 = p\E(\b_j^2)/\s_0^2$ is accumulated through
$\b_1,\ldots,\b_p$.  The condition \eqref{liminf} also implies that if
$X$ is full rank, then we
must have $p/n \to \rho < 1$ in order to ensure consistency.  This
condition seems less natural and can likely be relaxed with a more
careful analysis; similar challenges arise frequently in random matrix
theory when $p/n \to 1$
\citep[e.g.][]{bai2003convergence}.

\subsection{A more general concentration bound}\label{sec:RE_dconc}

In this section, we investigate the performance of $\hat{\bth}$ in
models where the random-effects might be dependent.  Suppose that $\tilde{\bb} = (\tilde{\b}_1,\ldots,\tilde{\b}_p)^{\T}
\in \R^p$ is a random vector that is independent of $\be,X$ and let
\begin{equation}\label{lmtilde}
\tilde{\y} = X\tilde{\bb} + \be.
\end{equation}    
We do not assume that $\tilde{\bb}$ has independent components or that
each of the components has the same variance.  
We define the variance
components estimator based on the data $(\tilde{\y},X)$,
\begin{equation}\label{vartilde}
\tilde{\bth} = (\tilde{\s}^2,\tilde{\eta}^2) = \argmax_{\s^2,\eta^2
  \geq 0} \tilde{\ell}(\bth), 
\end{equation}
where 
\[
\tilde{\ell}(\bth) = -\frac{1}{2}\log(\s^2) - \frac{1}{2n}\log
\det\left(\frac{\eta^2}{p}XX^{\T} + I\right) -
\frac{1}{2\s^2n}\tilde{\y}^{\T}\left(\frac{\eta^2}{p}XX^{\T} + I \right)^{-1}\tilde{\y}.
\]
The next proposition is a concentration bound for $\tilde{\bth}$,
which implies that the estimator may still perform reliably, if there is a good independent
coupling for $\tilde{\bb}$.  

\begin{prop}\label{prop:RE_dconc} 
Suppose that $\tilde{\y},\tilde{\bth}$ satisfy
\eqref{lmtilde}--\eqref{vartilde}. Suppose further that $\bb = (\b_1,\ldots,\b_p)^{\T} \in \R^p$
is a random vector with independent components, which is independent of $\be,X$ (but may be
correlated with
$\tilde{\bb}$), such that the independents random-effects model \eqref{LM}--\eqref{RE} and
\eqref{subgbd} hold.  Let $\kappa(\s_0^2,\eta_0^2,\L)$ be as in
\eqref{kappa}.
\begin{itemize}
\item[(a)] Suppose that $n_0 = n$.  There is an absolute constant $0 <
  C < \infty$ such that
\begin{align}\nonumber
\P\left\{\left.\Vert \tilde{\bth} - \bth_0\Vert > r\right\vert X\right\}
& \leq C\exp\left\{-\frac{n}{C}\cdot
  \frac{\kappa(\s_0^2,\eta_0^2,\L)}{\g^2(\g+1)^2} \cdot \frac{r^2}{(r+1)^2}\right\} \\ \label{conca}
& \quad + 4\P\left\{\left.\Vert  \tilde{\bb} - \bb\Vert >
  \frac{1}{C}\cdot \frac{\kappa(\s_0^2,\eta_0^2,\L)}{(\g +1)^4}
  \cdot\frac{n}{p+n} \cdot \frac{r}{r+1}\right\vert
  X\right\}. 
\end{align}
for every $r \geq 0$.  
\item[(b)] Suppose that $n_0 < n$.  There is an absolute constant $0 <
  C < \infty$ such that
\begin{align}\nonumber
\P\left\{\left.\Vert \tilde{\bth} - \bth_0\Vert > r\right\vert X\right\}&
\leq C\exp\left\{-\frac{n}{C} \cdot \frac{\kappa(\s_0^2,\eta_0^2,\L)}{\g^2(\g+1)^2}\cdot
  \left(1 - \frac{n_0}{n}\right)^4\left(\frac{n_0}{n}\right)^2\cdot\frac{r^2}{(r+1)^2}\right\}
  \\ \label{concb}
& \quad + 4\P\left\{\left.\Vert  \tilde{\bb} - \bb\Vert >
  \frac{1}{C}\cdot \frac{\kappa(\s_0^2,\eta_0^2,\L)}{(\g+1)^4}\cdot \frac{n}{p+n} \cdot \frac{r}{r+1}\right\vert
  X\right\}. 
\end{align}
for every $r \geq 0$.  
\end{itemize}
\end{prop}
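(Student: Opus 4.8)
The plan is to realize $\tilde\bth$ as a perturbation of the estimator $\hat\bth$ of \eqref{MLE} built from the \emph{coupled independent-effects} model $\y = X\bb + \be$ of \eqref{LM}--\eqref{RE}, to which Proposition \ref{prop:RE_conc} applies verbatim (its conclusion uses only the marginal law of $(\bb,\be)$ prescribed by \eqref{RE} and \eqref{subgbd}, not the joint law of $(\bb,\tilde\bb)$). Write $\d = \tilde\bb - \bb$, so $\tilde\y = \y + X\d$, and set $M(\eta^2) = (\tfrac{\eta^2}{p}XX^\T + I)^{-1}$, which satisfies $\Vert M(\eta^2)\Vert \le 1$ and $\Vert \tfrac{d}{d\eta^2}M(\eta^2)\Vert \le \l_1$ for every $\eta^2 \ge 0$. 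Expanding the profiled noise estimate gives the exact identity
\[
\tilde\s_*^2(\eta^2) \;=\; \s_*^2(\eta^2) \;+\; \frac{2}{n}\,\d^\T X^\T M(\eta^2)\,\y \;+\; \frac{1}{n}\,\d^\T X^\T M(\eta^2) X\,\d ,
\]
hence, since $\Vert X\Vert^2 = p\l_1$,
\[
\sup_{\eta^2 \ge 0}\bigl|\tilde\s_*^2(\eta^2) - \s_*^2(\eta^2)\bigr| \;\le\; \frac{2\sqrt{p\l_1}}{n}\,\Vert\d\Vert\,\Vert\y\Vert \;+\; \frac{p\l_1}{n}\,\Vert\d\Vert^2 ,
\]
with the same type of bound (an extra factor $\l_1$) for $\sup_{\eta^2\ge0}\bigl|\tfrac{d}{d\eta^2}\{\tilde\s_*^2(\eta^2) - \s_*^2(\eta^2)\}\bigr|$. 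Since $\tilde\ell_*$ depends on $\tilde\y$ only through $\log\tilde\s_*^2(\cdot)$, these bounds transfer to $\sup_{\eta^2}|\tilde\ell_*(\eta^2) - \ell_*(\eta^2)|$ and its derivative analogue. Note that $\ell_0$ and the target $\bth_0$ are unchanged: we compare $\tilde\ell_*$ to the population profile likelihood $\ell_0$ of the coupled independent model, \emph{not} to its own mean.

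The proof of Proposition \ref{prop:RE_conc} produces a ``good event'' $\mathcal E$, measurable with respect to $(\bb,\be)$, with $\P(\mathcal E^c\mid X)$ bounded via Theorem \ref{thm:QF_conc} by the exponential on the right-hand side of \eqref{conca} (resp. \eqref{concb}), on which: (i) $\Vert\y\Vert \le M_0$ for a deterministic $M_0 = O(\sqrt n)$ depending on $\s_0^2,\eta_0^2,\L$ --- indeed $\mathcal E$ controls $\s_*^2(0) = n^{-1}\Vert\y\Vert^2$ and $\E\{\Vert\y\Vert^2\mid X\} = \s_0^2\tr(\tfrac{\eta_0^2}{p}XX^\T+I)$; and (ii) $\ell_*$, together with its $\eta^2$-derivative, is uniformly close enough to $\ell_0$ that the maximizer $\hat\eta^2$, and hence $\hat\bth$, lies within $r$ of $\bth_0$ --- here the quantitative strong concavity of $\ell_0$ at its unique maximizer $\eta_0^2$, governed by $\vfrak(\L)$, is what is used. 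Since $\tilde\ell_*$ differs from $\ell_*$ only by the perturbations of the first paragraph, the \emph{same} deduction gives $\Vert\tilde\bth - \bth_0\Vert \le r$ on $\mathcal E \cap \{\Vert\d\Vert \le b\}$, as soon as $b$ is small enough that on that event
\[
\frac{2\sqrt{p\l_1}}{n}\,b\,M_0 \;+\; \frac{p\l_1}{n}\,b^2
\]
(and its $\l_1$-inflated derivative counterpart) does not exceed the tolerance implicit in step (ii). Taking $M_0 = O(\sqrt n)$ and $b = \tfrac1C\cdot\tfrac{\kappa(\s_0^2,\eta_0^2,\L)}{(\g+1)^4}\cdot\tfrac{n}{p+n}\cdot\tfrac{r}{r+1}$, and using $\tfrac{\sqrt{pn}}{p+n}\le\tfrac12$ and $\tfrac{pn}{(p+n)^2}\le\tfrac14$, one checks that for $C$ large this requirement is met: the high powers of $(\l_1+1)$, $(\eta_0^2+1)$, $(\s_0^2+1)$ and $(\l_{n_0}^{-1}+1)$ packaged into $\kappa(\s_0^2,\eta_0^2,\L)$ are precisely what absorb the operator norms $\Vert X\Vert^2 = p\l_1$, $\Vert M(\eta^2)\Vert$ and $\Vert\tfrac{d}{d\eta^2}M(\eta^2)\Vert$ entering the perturbation. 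This is the choice of $b$ displayed in \eqref{conca}--\eqref{concb}.

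It remains to assemble the pieces by a union bound. The event $\{\Vert\tilde\bth - \bth_0\Vert > r\}$ is contained in $\mathcal E^c$ together with a bounded number (at most four, corresponding to the linear and quadratic perturbation terms in the value and in the derivative of $\tilde\ell_*$) of events $\{\Vert\d\Vert > b_j\}$ with $b_j \ge b$; bounding each of these by $\P(\Vert\tilde\bb - \bb\Vert > b \mid X)$ produces the factor $4$ in \eqref{conca}--\eqref{concb}. Part (b) is identical, inheriting the additional factor $(1 - n_0/n)^4(n_0/n)^2$ directly from Proposition \ref{prop:RE_conc}(b), which reflects the different large-$\eta^2$ behaviour of $\s_0^2(\eta^2)$ when $n_0 < n$.

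The main obstacle is the bookkeeping of the second paragraph: one must reopen the proof of Proposition \ref{prop:RE_conc}, identify the internal tolerance controlling the distance from $\hat\eta^2$ to $\eta_0^2$, and verify --- uniformly over the non-compact range $\eta^2 \in [0,\infty)$, and without disturbing the weighting or the strong-concavity estimate used there --- that inserting the coupling error through $\Vert\d\Vert$ (and $\Vert\y\Vert$) leaves that tolerance intact with exactly the advertised $\tfrac{n}{p+n}$ and $\kappa/(\g+1)^4$ scalings. Everything else --- the perturbation identity, the operator-norm bounds $\Vert X\Vert^2 = p\l_1$, $\Vert M(\eta^2)\Vert\le1$, $\Vert\tfrac{d}{d\eta^2}M(\eta^2)\Vert\le\l_1$, and the Hanson--Wright tail for $\Vert\y\Vert^2$ already contained in $\mathcal E$ --- is routine.
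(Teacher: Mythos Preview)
Your approach matches the paper's: couple $\tilde\bth$ to $\hat\bth$ through the perturbation $\d = \tilde\bb - \bb$, intersect with a good event $E_\circ$ controlling $\Vert\y\Vert^2$, and reduce to the exponential tail from Proposition~\ref{prop:RE_conc} plus a handful of events of the form $\{\Vert\d\Vert > b\}$. One detail you elide is the transfer from $|\tilde\s_*^2 - \s_*^2|$ to $|\log\tilde\s_*^2 - \log\s_*^2|$, which requires a \emph{lower} bound on $\tilde\s_*^2$; the paper secures this via an auxiliary event $E_\star = \{\sup_{\eta^2}|\tilde\s_*^2 - \s_*^2| \le \s_0^2/8\}$, and it is this event together with the separate perturbations of the three functionals $\s_*^2$, $H_*$, $\ell_*$ appearing in the proof of Proposition~\ref{prop:RE_conc} (rather than ``linear and quadratic terms in the value and derivative of $\ell_*$'') that accounts for the factor~$4$.
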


The proof of Proposition \ref{prop:RE_dconc} is similar to that
of Proposition \ref{prop:RE_conc} and may be found in Section
\ref{supp:RE_dconc} of the Supplementary Material.  Observe that the first term in each upper bound
\eqref{conca}--\eqref{concb} is the exact same as in Proposition
\ref{prop:RE_conc}.  The second term in each of the bounds is new; this
term is small, if $\Vert \tilde{\bb} - \bb\Vert$ is typically small.
In other words, in a random-effects
model where \eqref{RE} does not hold, the Gaussian maximum likelihood
estimator $\tilde{\bth}$ may be a reliable estimator for the variance
components parameter $\bth_0 = (\s_0^2,\eta_0^2)$ from a corresponding random-effects
model \eqref{LM}--\eqref{RE}, if $\tilde{\bb} \approx \bb$.
Proposition \ref{prop:RE_dconc} is useful for applications involving misspecified
random-effects models.  For example, it can be used to recover some of 
\citepos{jiang2014high} results for sparse random-effects models in
genome-wide assocation studies (though Jiang et al. take a very
different approach), and for variance estimation problems in
high-dimensional linear models with fixed (non-random) $\bb$ \citep{dicker2015efficient}.
In both of these applications, the predictors $x_{ij}$ are assumed to
be random; the strategy is to leverage symmetry in the predictor distribution to
reduce the problem to one where $\tilde{\bb}$ is exchangeable and has
a tight independent coupling, so that Proposition \ref{prop:RE_dconc}
can be applied. 

\subsection{Normal approximation}\label{sec:RE_napprox}

In this section, we shift our attention back to the independent random-effects
model \eqref{LM}--\eqref{RE} and give a normal approximation result
for $\hat{\bth}$ (Proposition \ref{prop:RE_napprox} below).  One consequence of
Proposition \ref{prop:RE_napprox} is that under conditions similar to
those described after Proposition \ref{prop:RE_conc},
$n^{1/2}(\hat{\bth} - \bth_0)$ is
asymptotically normal, when $p/n \to \rho \in [0,\infty)$.  As with
consistency (discussed in Section \ref{sec:RE_conc}), asymptotic normality of $\hat{\bth}$
has been studied previously in similar settings
\citep{jiang1996REML}.  
However, the
main significance of Theorem \ref{prop:RE_napprox} is its flexible
finite-sample nature, which makes it an easy-to-use tool for 
applications.  

To derive Theorem \ref{prop:RE_napprox}, 
we again
follow the standard
strategy for parametric $M$-estimators.  First, we introduce the score
function
\begin{align*}
S(\bth) & = \frac{\partial}{\partial\bth} \ell(\bth) =
          \left[\begin{array}{c} S_1(\bth) \\ S_2(\bth)\end{array}\right]\\ & =
\left[\begin{array}{c}  \frac{1}{2\s^4n}\y^{\T}\left(\frac{\eta^2}{p}XX^{\T} +
        I\right)^{-1}\y -\frac{1}{2\s^2} 
       \\ \frac{1}{2\s^2n}\y^{\T}\left(\frac{1}{p}XX^{\T}\right) \left(\frac{\eta^2}{p}XX^{\T} +
        I\right)^{-2}\y-\frac{1}{2n}\tr\left\{\left(\frac{1}{p}XX^{\T}\right) \left(\frac{\eta^2}{p}XX^{\T} +
        I\right)^{-1}\right\} \end{array}\right].
\end{align*}
Then $S(\hat{\bth}) = 0$, provided $\hat{\eta}^2 > 0$.  The main idea
of the proof is to Taylor
expand the score function about $\bth_0$ so that
\begin{equation}\label{taylor_score}
0 = S(\hat{\bth}) = S(\bth_0) +
J(\bth_0)(\hat{\bth} - \bth_0) + \r,
\end{equation}
where $J(\bth) = \frac{\partial}{\partial\bth} S(\bth)$
and $\r$ is a remainder term.  Theorem \ref{prop:RE_napprox} follows by solving
for $\hat{\bth}- \bth_0$ above, then using three key intermediate
results: (i) $S(\bth_0)$ is approximately normal, (ii)
$J(\bth_0) \approx J_0(\bth_0)$, where
\begin{equation}\label{J}
\J(\bth) = \E\{J(\bth)\} = \E\left\{\frac{\partial}{\partial\bth} S(\bth)\right\},
\end{equation} and (iii) the remainder term $\r$ is small.
 Approximate
normality of $S(\bth_0)$ follows from Theorem \ref{thm:QF_napprox} in
this paper.  The approximation
$J(\bth_0) \approx\J(\bth_0)$ and the fact
that $\r$ is small follow from  concentration properties of quadratic forms.

\begin{prop}\label{prop:RE_napprox} 
Assume that the linear random-effects model \eqref{LM}--\eqref{RE}
holds and that $\b_1,\ldots,\b_p$, $\e_1,\ldots,\e_n$ are independent
random variables satisfying \eqref{subgbd}.  Define
\begin{equation}\label{nu}
\nu(\s_0^2,\eta_0^2,\L) = \frac{(\s_0^2+1)^9(\eta_0^2+1)^{16}(\l_1+1)^{24}}{\s_0^6\eta_0^2}\cdot\frac{\{\vfrak(\L)+1\}^3}{\vfrak(\L)^3},
\end{equation}
let $f \in \C_b^3(\R^2)$,
and let $\z_2 \sim N(0,I)$ be a two-dimensional standard normal
random vector.
Finally, let $\J(\bth_0)$ be as in \eqref{J}, define
$\I(\bth_0)  = \Var\{S(\bth_0)\vert X\}$, and define
\begin{equation}\label{avar}
\Psi  = \J(\bth_0)^{-1}\I(\bth_0)\J(\bth_0)^{-1}.  
\end{equation}
There is an
absolute constant $0 < C < \infty$ such that
\begin{align}\nonumber
\bigg\vert \E\left[\left.f\{\sqrt{n}(\hat{\bth} - \bth_0)\}\right\vert
  X\right]  & - \E\{f(\Psi^{1/2}\z_2)\vert X\}
\bigg\vert \\ \nonumber &\leq
                C(\g+1)^8\nu(\s_0^2,\eta_0^2\L)\left\{\prod_{k =
                1}^3(1 + \vert
                f\vert_k)\right\}\cdot \frac{p+n}{n}\cdot \frac{\log(n)^2}{n^{1/2}} \\ \label{nbd}
& \qquad + 2\vert f\vert_0\P\left\{\left.\Vert
  \hat{\bth} - \bth_0\Vert >
  \frac{\s_0^2\log(n)}{2\sqrt{n}}\right\vert X\right\}.
\end{align}
\end{prop}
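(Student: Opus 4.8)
The plan is to run the classical linearization argument for parametric $M$-estimators, with Theorems~\ref{thm:QF_conc} and~\ref{thm:QF_napprox} supplying the two quantitative inputs. The first step is to reduce every relevant statistic to a quadratic form in a standardized vector. Put $\bz = \big(p^{1/2}\b_1/(\s_0\eta_0),\dots,p^{1/2}\b_p/(\s_0\eta_0),\e_1/\s_0,\dots,\e_n/\s_0\big)^{\T}\in\R^{p+n}$; by \eqref{RE} and \eqref{subgbd} its coordinates are independent, mean $0$, variance $1$, and sub-Gaussian with $\psi_2$-norm at most a multiple of $\g_{\mathrm{eff}} := \g/\{\s_0\min(1,\eta_0)\}$. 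Since $\y = \s_0 B\bz$ with $B = [\,\eta_0 p^{-1/2}X \mid I_n\,]$ (an $n\times(p+n)$ matrix) and $BB^{\T} = \eta_0^2 p^{-1}XX^{\T}+I$, a direct computation gives $n^{1/2}S(\bth_0) = A(w_1,w_2)^{\T}$, where $w_k = \bz^{\T}Q_k\bz - \tr(Q_k)$ with $Q_1 = B^{\T}(BB^{\T})^{-1}B$ (an orthogonal projection of rank $n$, so $\Vert Q_1\Vert = 1$), $Q_2 = B^{\T}(p^{-1}XX^{\T})(BB^{\T})^{-2}B$ (with $\Vert Q_2\Vert\le(\eta_0^2\l_1+1)\l_1$), and $A = \diag\{1/(2\s_0^2 n^{1/2}),\, 1/(2n^{1/2})\}$. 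Likewise, every entry of $J(\bth)$ and every third-order partial derivative of $\ell(\bth)$ equals a deterministic term plus a quadratic form $\bz^{\T}Q(\eta^2)\bz$ of exactly the type treated by Theorem~\ref{thm:QF_conc}, i.e. $Q(\eta^2) = V T(\eta^2) V^{\T}$ with $T$ Lipschitz on a fixed compact interval $[0,R]$ around $\eta_0^2$, and with $\Vert V^{\T}V\Vert$, $\Vert T(0)\Vert$, $\Vert T(0)\Vert_{\HS}$ and the Lipschitz constant controlled by polynomials in $\s_0^{\pm2},\eta_0^{\pm2},\l_1$.

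The second step is localization. Fix the shrinking neighborhood $\mathcal N = \{\bth:\Vert\bth-\bth_0\Vert\le \s_0^2\log(n)/(2\sqrt n)\}$ (for $n$ below a $\nu$-dependent threshold the claimed bound is vacuous after enlarging $C$, so assume $n$ large; in particular $\mathcal N\subseteq\{\eta^2>0\}$, hence $S(\hat\bth)=0$ whenever $\hat\bth\in\mathcal N$). Applying Theorem~\ref{thm:QF_conc} with $K=1$, $\u=\eta^2$ (and the ordinary Hanson--Wright inequality at $\bth_0$) produces an event $\mathcal E$ with $\P(\mathcal E^{c}\mid X)\le C\exp(-n/C')$ for $\nu$-dependent constants, on which simultaneously $\Vert S(\bth_0)\Vert\lesssim\log(n)/\sqrt n$, $\Vert J(\bth_0)-\J(\bth_0)\Vert\lesssim\log(n)/\sqrt n$, and the supremum over $\bth\in\mathcal N$ of all third-order partial derivatives of $\ell$ is bounded by a $\nu$-dependent constant. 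In parallel one computes $\J(\bth_0)$ — the conditional Fisher information — in closed form and bounds $\Vert\J(\bth_0)^{-1}\Vert$ from above; this is where the eigenvalue variance enters, since $\det\J(\bth_0)$ is bounded below by a positive quantity proportional to $\eta_0^2\,\vfrak(\L)$ (up to $\s_0,\eta_0,\l_1$-dependent factors), via a Cauchy--Schwarz computation expressing that the $\s^2$- and $\eta^2$-score directions are collinear exactly when $\vfrak(\L)=0$. Collecting these bounds — with $\Vert\J(\bth_0)^{-1}\Vert$ appearing up to the third power — yields the quantity $\nu(\s_0^2,\eta_0^2,\L)$ of \eqref{nu}, the factor $\vfrak(\L)^{-3}$ being the dominant source of ill-conditioning.

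On $\mathcal E\cap\{\hat\bth\in\mathcal N\}$ one has $S(\hat\bth)=0$, so \eqref{taylor_score} holds with $\Vert\r\Vert$ bounded by $\Vert\hat\bth-\bth_0\Vert^2$ times the neighborhood bound on the third derivatives of $\ell$; solving \eqref{taylor_score} for $\hat\bth-\bth_0$ (using invertibility of $J(\bth_0)$ on $\mathcal E$), replacing $J(\bth_0)$ by $\J(\bth_0)$ and absorbing $\r$ into the error — at a total cost of order $\Vert\J(\bth_0)^{-1}\Vert^2\Vert J(\bth_0)-\J(\bth_0)\Vert\,\Vert S(\bth_0)\Vert + \Vert\J(\bth_0)^{-1}\Vert\,\Vert\r\Vert$ — and multiplying by $n^{1/2}$ gives, on this event, $n^{1/2}(\hat\bth-\bth_0) = -\J(\bth_0)^{-1}n^{1/2}S(\bth_0) + R_n$ with $\Vert R_n\Vert\le C(\g+1)^{8}\,\nu(\s_0^2,\eta_0^2,\L)\,\log(n)^2/\sqrt n$. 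Next, since $-\J(\bth_0)^{-1}n^{1/2}S(\bth_0) = -\J(\bth_0)^{-1}A(w_1,w_2)^{\T}$ is a fixed linear image of $(w_1,w_2)^{\T}$, Theorem~\ref{thm:QF_napprox} with $K=2$, $d=p+n$, matrices $Q_1,Q_2$ and test function $g(x_1,x_2,x_3,x_4)=f\{-\J(\bth_0)^{-1}A(x_1,x_3)^{\T}\}\in\C_b^3(\R^4)$ (so $\vert g\vert_j\le C\vert f\vert_j\,\Vert\J(\bth_0)^{-1}A\Vert^{j}$) gives $\big\vert\E\{f(-\J(\bth_0)^{-1}n^{1/2}S(\bth_0))\mid X\} - \E\{f(\Psi^{1/2}\z_2)\mid X\}\big\vert \le C(\g_{\mathrm{eff}}+1)^{8}\{(p+n)^{1/2}\vert g\vert_2 M^2 + (p+n)\vert g\vert_3 M^3\}$ with $M=\max_k\Vert Q_k\Vert$ and $\Psi$ as in \eqref{avar} (up to the $n^{1/2}$ rescaling carried by $A$). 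Finally, split $\E[f\{n^{1/2}(\hat\bth-\bth_0)\}\mid X]$ over $\mathcal E\cap\{\hat\bth\in\mathcal N\}$ and its complement: on the complement use $\vert f\{\cdot\}-f(\Psi^{1/2}\z_2)\vert\le 2\vert f\vert_0$, producing the term $2\vert f\vert_0\,\P\{\Vert\hat\bth-\bth_0\Vert>\s_0^2\log(n)/(2\sqrt n)\mid X\}$ of \eqref{nbd} (the $\P(\mathcal E^{c}\mid X)$ contribution being absorbed since $e^{-n/C}\lesssim n^{-1/2}$); on $\mathcal E\cap\{\hat\bth\in\mathcal N\}$ replace $n^{1/2}(\hat\bth-\bth_0)$ by $-\J(\bth_0)^{-1}n^{1/2}S(\bth_0)$ at Lipschitz cost $\vert f\vert_1\Vert R_n\Vert$, then invoke the displayed bound. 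Using $M\le(\eta_0^2\l_1+1)\l_1$, $\g_{\mathrm{eff}}=\g/\{\s_0\min(1,\eta_0)\}$, $\Vert\J(\bth_0)^{-1}A\Vert\lesssim\Vert\J(\bth_0)^{-1}\Vert/n^{1/2}$, $(p+n)^{1/2}/n\le(p+n)/(n\sqrt n)$, $(p+n)/n^{3/2}=\{(p+n)/n\}\,n^{-1/2}$, and $\vert f\vert_j\le\prod_{k=1}^{3}(1+\vert f\vert_k)$, and folding all $\s_0,\eta_0,\l_1,\vfrak(\L)$-dependence into $\nu$ and the extra $\g$-power into $(\g+1)^8$, yields \eqref{nbd}.

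I expect the main obstacle to be the second step: realizing the matrix-valued quantities $J(\bth)-\J(\bth)$ and the third derivatives of $\ell$ as quadratic forms $V T(\eta^2)V^{\T}$ with explicitly Lipschitz $T$, while tracking $\Vert V^{\T}V\Vert$ and $\Vert T(0)\Vert_{\HS}$ so that Theorem~\ref{thm:QF_conc} applies uniformly over the $\eta^2$-interval, together with the closed-form computation of the conditional Fisher information $\J(\bth_0)$ and the Cauchy--Schwarz lower bound on $\det\J(\bth_0)$ that pins the $\vfrak(\L)$-dependence in $\nu$. The remaining bookkeeping — matching absolute constants and the $(\g+1)^8$, $\prod_k(1+\vert f\vert_k)$ and $(p+n)/n$ factors — is routine but lengthy.
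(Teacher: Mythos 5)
Your proposal follows the same architecture as the paper's proof: Taylor-expand the score as in \eqref{taylor_score}, localize on $A=\{\Vert\hat{\bth}-\bth_0\Vert\le\s_0^2\log(n)/(2\sqrt n)\}$, write $\sqrt n\,S(\bth_0)$ as a pair of centered quadratic forms in the standardized vector $(\sqrt p\,\bb^{\T}/\tau_0,\be^{\T}/\s_0)^{\T}$ and apply Theorem \ref{thm:QF_napprox} with $K=2$ to $\x\mapsto f\{-\J(\bth_0)^{-1}\x\}$, lower-bound $\det\J(\bth_0)$ by a multiple of $\vfrak(\L)$ to control $\Vert\J(\bth_0)^{-1}\Vert$, and split the expectation over $A$ and $A^c$ to produce the $2\vert f\vert_0\P(A^c\vert X)$ term. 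All of that matches the paper's proof in Appendix \ref{app:RE_napprox}.

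The one place you genuinely diverge is in controlling $J(\bth_0)-\J(\bth_0)$, the remainder $\r$, and the third derivatives of $\ell$: you do this on a high-probability event $\mathcal E$ built from Theorem \ref{thm:QF_conc}, whereas the paper works in $L^2$ --- it bounds $\E\{\Vert J(\bth_0)-\J(\bth_0)\Vert^2\mid X\}$ entrywise by variances of quadratic forms (Lemma \ref{lemma:vqf}), computes $\partial^2 S_k/\partial\bth^2$ explicitly and dominates it by $1+\Vert\y\Vert^4/n^2$, and then applies Cauchy--Schwarz against $\E(\Vert\hat{\bth}-\bth_0\Vert^4\1_A\mid X)^{1/2}\le\{\s_0^2\log(n)/(2\sqrt n)\}^2$. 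Your route leaves a residual term $2\vert f\vert_0\,\P(\mathcal E^c\mid X)$ that is not present in \eqref{nbd}, and the proposed absorption does not go through with an absolute constant: the exponential rate in $\P(\mathcal E^c\mid X)$ depends on $\g,\s_0^2,\eta_0^2,\L$, and $\vert f\vert_0$ is not dominated by $\prod_{k=1}^3(1+\vert f\vert_k)$ (the left-hand side of \eqref{nbd} is invariant under adding constants to $f$, but $\vert f\vert_0$ is not), so the claim that the bound is vacuous for small $n$ after enlarging $C$ fails for $f$ with large sup-norm and small derivatives. This is fixable --- switch to the moment bounds above, or carry the extra $\P(\mathcal E^c\mid X)$ term explicitly --- but as written the event-based step does not deliver the inequality in the form stated.
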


A detailed proof of Theorem \ref{prop:RE_napprox} may be found in Appendix
\ref{app:RE_napprox}.  The quantity $\nu(\s_0^2,\eta_0^2,\L)$ in
\eqref{nbd} is potentially extremely large, and plays a role similar
to $\kappa(\s_0^2,\eta_0^2,\L)$ in Propositions
\ref{prop:RE_conc}--\ref{prop:RE_dconc}.  As with the previous
propositions, despite the potential magnitude of
$\nu(\s_0^2,\eta_0^2,\L)$, the asymptotic implications of Proposition
\ref{prop:RE_napprox} are very reasonable.  Indeed, assume that the conditions of the proposition
hold.  If, additionally, $\s_0^2,\eta_0^2,\l_1,\ldots,\l_n,\vfrak(\L)$
are contained in a compact subset of $(0,\infty)$ and $p/n \to \rho
\in [0,\infty)$,
then it is clear that the first term on the right-hand side of
\eqref{nbd} converges to 0.  Moreover, Theorem \ref{prop:RE_conc} implies
that the second term on the right-hand side of \eqref{nbd} converges
to 0, as long as we have the additional condition \eqref{liminf} when
$n_0 < n$.   Thus, under the specified conditions,
\begin{equation}\label{nasymp}
\bigg\vert \E\left[\left.f\{\sqrt{n}(\hat{\bth} - \bth_0)\}\right\vert
  X\right]   - \E\{f(\Psi^{1/2}\z)\vert X\}
\bigg\vert \to 0
\end{equation}
for all $f \in \mathcal{C}_b^3(\R^2)$.  This is an
asymptotic normality result for $\sqrt{n}(\hat{\bth} - \bth_0)$.  One
apparent
limitation of \eqref{nasymp} is that it only applies for $f \in
\mathcal{C}_b^3(\R^2)$.    However, standard arguments
\citep[e.g. Section 3 of][]{reinert2009multivariate} imply that
\eqref{nasymp} is valid for broader classes of non-smooth functions $f$,
including indicator functions for measurable convex subsets of $\R^2$; thus, we may
conclude that $\sqrt{n}\Psi^{-1/2}(\hat{\bth} - \bth_0) \leadsto N(0,I)$ in
distribution, where $\Psi$ is defined in \eqref{avar}.   We note
additionally that if $\bb$ and
$\be$ are Gaussian, then $\Psi = \I(\bth_0)^{-1} = \I_N(\bth_0)^{-1}$,
where $\I_N(\bth_0) = (\iota_{ij}(\bth_0))_{i,j=1,2}$ is the Gaussian
Fisher information matrix for $\bth_0$ and
\[
\iota_{kl}(\bth_0) = \frac{1}{2\s_0^{2(4 - k - l)}n}\tr\left\{\left(\frac{1}{p}XX^{\T}\right)^{k+l-2}\left(\frac{\eta_0^2}{p}XX^{\T}
                                          + I\right)^{2-k-l}\right\},
                                      \ \ k,l=1,2.
\]
Moreover, standard likelihood theory
\citep[e.g. Chapter 6 of][]{lehmann1998theory} implies that $\hat{\bth}$ is
asymptotically efficient in the Gaussian random-effects model.

\section{Discussion}

We have presented new uniform concentration and normal approximation
bounds for quadratic forms, and described some applications to variance
components estimation in linear random-effects models.  We expect that
the general results for quadratic forms, found in Section
\ref{sec:QF}, will be useful in a range of other
applications, such as variance components estimation in
non-standard random- and fixed-effects linear models, which arise in
genomics and other applications \citep{jiang2014high, dicker2015efficient};
hypothesis testing for variance components parameters in
high-dimensional models; and other
hypothesis testing problems, where the test statistics involve
quadratic forms in many random variables.  As discussed in Sections
\ref{sec:QF_napprox} and \ref{sec:RE}, many of the bounds in the paper
can be improved, at the expense of introducing some additional
complexity into the results. Furthermore, all of our results require sub-Gaussian
random variables.  It may be of interest to sharpen the results in the paper
and extend them to allow for heavier-tailed random variables with
sufficiently many moments.

\begin{appendices}
\section{Proof of Theorem \ref{thm:QF_conc}} \label{app:QF_conc}

The proof begins with a chaining construction.  Fix a positive integer
$M$ and define a regular grid on $[0,R]^K$
with $(2^M+1)^K$ points,
  $\U_M^K = \U_M \times \cdots \times \U_M$, where $\U_M =
  \{i2^{-M}R\}_{i = 0}^{2^M} \subseteq [0,R] \subseteq\R$.  For each $\u = (u_1,\ldots,u_K)^{\T} \in [0,R]^K$ and $j =
  1,\ldots,M$ define $\dot{\u}_j = (\dot{u}_{1j},\ldots,\dot{u}_{Kj})^{\T}$
  where $\dot{u}_{ij}$ is the smallest point in $\U_M$ that is at least as large as
  $u_i$; additionally, define $\dot{u}_0 = 0$.  

Next, consider the decomposition
\[
\bz^{\T}Q(\u)\bz - \E\{\bz^{\T}Q(\u)\bz\} 
  = \Delta_1(\u) + \Delta_2(\u) +\Delta_3(\u),  
\]
where
\begin{align*}
\Delta_1(\u) & = \bz^{\T}\{Q(\u) - Q(\dot{\u}_M)\}\bz -
               \E\left[\bz^{\T}\{Q(\u) - Q(\dot{\u}_M)\}\bz\right], \\ 
\Delta_2(\u) & = \bz^{\T}Q(\dot{\u}_0)\bz - \E\{\bz^{\T}Q(\dot{\u}_0)\bz\},
  \\
\Delta_3(\u) & = \sum_{j = 1}^M \bz^{\T}\{Q(\dot{\u}_j) -
              Q(\dot{\u}_{j-1})\}\bz -\sum_{j= 1}^M
               \E\left[\bz^{\T}\{Q(\dot{\u}_j) - Q(\dot{\u}_{j-1})\}\bz\right].
\end{align*}
Let $ r_1,r_2,r_3 >  0$ satisfy $r_1 + r_2 + r_3 = r$.  Then
\begin{equation}\label{p_bound}
\P\left[\sup_{\u \in [0,R]^K} \vert \bz^{\T}Q(\u)\bz -
  \E\{\bz^{\T}Q(\u)\bz\}\vert > r\right] \leq \sum_{i= 1}^3\P\left\{\sup_{\u \in [0,R]^K}
                                                \vert \Delta_i(\u) \vert
                                                > r_l \right\}.
\end{equation}
To prove the theorem, we bound each term on the right-hand side of
\eqref{p_bound}.

To bound the term in \eqref{p_bound} involving $\Delta_1(\u)$, observe that
\begin{align*}
\vert \Delta_1(\u) \vert & \leq \Vert \bz \Vert^2 \Vert Q(\u) -
Q(\dot{\u}_M)\Vert + \left\vert\tr\left[\E(\bz\bz^{\T})\{Q(\u) -
Q(\dot{\u}_M)\}\right]\right\vert \\
& \leq L \Vert \u - \dot{\u}_M\Vert \Vert V^{\T}V\Vert \left\{\Vert \bz \Vert^2 + \sum_{i
  = 1}^d \E(\zeta_i^2)\right\} \\
& \leq K^{1/2}L R 2^{-M} \Vert V^{\T}V\Vert\left(\Vert \bz \Vert^2 + 4d\g^2\right),
\end{align*}
where the second inequality follows from Von Neumann's trace
inequality \citep{mirsky1975trace} and the last inequality follows
from \eqref{subgauss}.  It follows that
\begin{align}\nonumber
\P\left\{\sup_{\u \in [0,R]^K}
                                                \vert \Delta_1(\u) \vert
                                                > r_1 \right\} & \leq
                                                                  \P\left\{\frac{K^{1/2}LR}{2^Mr_1} \Vert V^{\T}V\Vert
                                                                  \left(\Vert\bz
                                                                  \Vert^2
                                                                  +
                                                                  4d\g^2\right)
                                                                  > 1
                                                                  \right\}
  \\ \nonumber
& \leq \frac{K^{1/2}LR}{2^Mr_1} \Vert V^{\T}V\Vert
                                                                  \E\left(\Vert\bz
                                                                  \Vert^2
                                                                  +
                                                                  4d\g^2\right)
  \\ \label{p1_bound}
& \leq \frac{8K^{1/2}LRd\g^2}{2^Mr_1} \Vert V^{\T}V\Vert.
\end{align}

To bound the term in \eqref{p_bound} that depends on $\Delta_2(\u)$, we use the Hanson-Wright
inequality \citep[Theorem 1.1 of][]{rudelson2013hanson}, which implies
that there is an absolute constant $c > 0$, such that
\begin{align}\nonumber
\P\left\{\sup_{\u \in [0,R]^K} \vert \Delta_2(\u)\vert >
  r_2\right\} & = \P\left[ \vert\bz^{\T} Q(0) \bz - \E\{\bz^{\T}Q(0)\bz\}\vert >
  r_2\right] \\ \label{p2_bound}
& \leq 2\exp\left[-c\min\left\{\frac{r_2^2}{\g^4\Vert
  Q(0)\Vert_{\HS}^2},\frac{r_2}{\g^2\Vert Q(0)\Vert}\right\}\right]
  \\ \nonumber
& \leq 2\exp\left[-c\min\left\{\frac{r_2^2}{\g^4\Vert V^{\T}V\Vert^2\Vert T(0)
  \Vert_{\HS}^2 },\frac{r_2}{\g^2\Vert V^{\T}V\Vert\Vert T(0)\Vert}\right\}\right]
\end{align}
(specifically, the first inequality above follows from the
Hanson-Wright inequality; the second inequality follows from basic
bounds on matrix norms).

Finally, we bound the term in involving $\Delta_3(\u)$ in \eqref{p_bound}.  Let
$s_1,\ldots,s_K \geq 0$ satisfy $s_1 + \cdots + s_K = 1$.  Then
\begin{align*}
\P\Bigg\{&\sup_{\u \in [0,R]^K} \vert \Delta_3(\u)\vert >
  r_3\Bigg\} \\ & \leq \sum_{j = 1}^M \P\left\{\sup_{\u \in [0,R]^K} \left\vert\bz^{\T}\{Q(\dot{\u}_j) -
                 Q(\dot{\u}_{j-1})\}\bz - \E[\bz^{\T}\{Q(\dot{\u}_j) -
                 Q(\dot{\u}_{j-1})\}\bz]\right\vert > s_jr_3\right\}.
\end{align*}
By construction, for each $j = 1,\ldots, M$ and $i = 1,\ldots,K$, there is a $k =
1,\ldots,2^J$ such that $\dot{u}_{ij} =k2^{-j}R$.  Thus, for each $j =
1,\ldots,M$ and $i = 1,\ldots,K$, there are $2^{jK}$
possible pairs $(\dot{\u}_j,\dot{\u}_{j-1})$ and it follows that
\begin{align*}
\P\Bigg\{&\sup_{\u \in [0,R]^K}\left\vert\bz^{\T}\{Q(\dot{\u}_j) -
                 Q(\dot{\u}_{j-1})\}\bz - \E[\bz^{\T}\{Q(\dot{\u}_j) -
                 Q(\dot{\u}_{j-1})\}\bz]\right\vert >
  s_jr_3\Bigg\} \\
& \leq 2^{jK}\max_{(\dot{\u}_j,\dot{\u}_{j-1})} \P\Bigg\{ \left\vert\bz^{\T}\{Q(\dot{\u}_j) -
                 Q(\dot{\u}_{j-1})\}\bz - \E[\bz^{\T}\{Q(\dot{\u}_j) -
                 Q(\dot{\u}_{j-1})\}\bz]\right\vert >
  s_kr_3\Bigg\} \\
& \leq 2^{jK+1}\max_{(\dot{\u}_j,\dot{\u}_{j-1})} 
  \exp\left[-c\min\left\{\frac{s_j^2r_3^2}{\g^4\Vert Q(\dot{\u}_{j})
  - Q(\dot{\u}_{j-1})\Vert_{\HS}^2},\frac{s_jr_3}{\g^2\Vert Q(\dot{\u}_{j})
  - Q(\dot{\u}_{j-1})\Vert}\right\}\right] \\
& \leq
  2^{jK+1}\exp\left[-c\min\left\{\frac{4^js_j^2r_3^2}{\g^4\Vert
  V^{\T}V\Vert^2KL^2R^2 m},\frac{2^js_jr_3}{\g^2\Vert V^{\T}V\Vert K^{1/2}LR}\right\}\right],
\end{align*}
where we have used the Hanson-Wright inequality again in the third
line above.  We conclude that 
\begin{align*}
\P\Bigg\{\sup_{\u \in [0,R]^K}  &\vert \Delta_3(\u)\vert >
  r_3\Bigg\} \\ & \leq \sum_{j = 1}^M 2^{jK+1}\exp\left[-c\min\left\{\frac{4^js_j^2r_3^2}{\g^4\Vert
  V^{\T}V\Vert^2KL^2R^2 m},\frac{2^js_jr_3}{\g^2\Vert V^{\T}V\Vert K^{1/2}LR}\right\}\right].
\end{align*}
Now take $s_j = (1/3)\cdot(3/4)^j$, for $j = 1,\ldots,M-1$, and $s_M = 1 - (s_1 +
\cdots + s_{M-1}) > (1/3)\cdot(3/4)^M$.  Then 
\begin{align*}
\P\bigg\{&\sup_{\u \in [0,R]^K}  \vert \Delta_3(\u)\vert >
  r_3\bigg\}  \\ & \leq \sum_{j = 1}^M
                 2^{jK+1}\exp\left[-c\min\left\{\frac{(9/4)^jr_3^2}{9\g^4\Vert
                  V^{\T}V\Vert^2KL^2R^2m},\frac{(3/2)^jr_3}{3\g^2\Vert
                  V^{\T}V\Vert K^{1/2}LR}\right\}\right]
  \\
& = \sum_{k = 1}^K \exp\left[(jK+1)\log(2)
  -c\min\left\{\frac{(9/4)^jr_3^2}{9\g^4\Vert
  V^{\T}V\Vert^2KL^2R^2m},\frac{(3/2)^jr_3}{3\g^2\Vert V^{\T}V\Vert K^{1/2}LR}\right\}\right].
\end{align*}
If 
\begin{equation}\label{lb_epsilon}
\frac{r_3^2}{\g^4\Vert V^{\T}V\Vert^2KL^2R^2m} \geq \frac{225K^2}{\min\{c,c^2\}}, 
\end{equation}
then 
\begin{align*}
(jK+1)\log(2)&-  c\min\left\{\frac{(9/4)^jr_3^2}{9\g^4\Vert
                V^{\T}V\Vert^2KL^2R^2m},\frac{(3/2)^jr_3}{3\g^2\Vert
                V^{\T}V\Vert K^{1/2} LR}\right\} \\ &
  \leq -j\log(2) - c\min\left\{\frac{r_3^2}{9\g^4\Vert
                                              V^{\T}V\Vert^2KL^2R^2m},\frac{r_3}{3\g^2\Vert
                                              V^{\T}V\Vert K^{1/2}LR}\right\}.
\end{align*}
Hence, if \eqref{lb_epsilon} holds,
\begin{align} \label{p3_bound}
\P\Bigg\{\sup_{\u \in [0,R]^K}  & \vert \Delta_3(\u)\vert >
  r_3\Bigg\} \\ \nonumber
&  \leq \exp\left[-c \min\left\{\frac{r_3^2}{9\g^4\Vert
                                 V^{\T}V\Vert^2KL^2R^2m},\frac{r_3}{3\g^2\Vert
                                 V^{\T}V\Vert K^{1/2}LR}\right\}\right].
\end{align}

To finish the proof, we combine \eqref{p_bound}--\eqref{p2_bound} and \eqref{p3_bound}, and let
$K \to \infty$, to obtain
\begin{align*}
\P\Bigg[\sup_{\u \in [0,R]^K}  & \vert \bz^{\T}Q(\u)\bz -
  \E\{\bz^{\T}Q(\u)\bz\}\vert > r\Bigg] \\
& \leq 2\exp\left[-c\min\left\{\frac{r_2^2}{\g^4\Vert V^{\T}V\Vert^2\Vert T(0)
  \Vert_{\HS}^2},\frac{r_2}{\g^2 \Vert V^{\T}V\Vert\Vert T(0)\Vert
  }\right\}\right]\\ & \qquad + \exp\left[-c
                       \min\left\{\frac{r_3^2}{9\g^4\Vert
                       V^{\T}V\Vert^2KL^2R^2m},\frac{r_3}{3\g^2\Vert
                       V^{\T}V\Vert K^{1/2}LR}\right\}\right],
\end{align*}
whenever \eqref{lb_epsilon} holds.
The theorem follows by taking, say, $r_1=r_2 = r_3 = r/3$.  

\section{Proof of Theorem
  \ref{thm:QF_napprox}} \label{app:QF_napprox}

We follow the proof of Theorem 2.1 in \cite{reinert2009multivariate}, and use 
Stein's method with exchangeable pairs.   Let $f: \R^{2K} \to \R$ be a
three-times differentiable function.  By Lemma 2.6 in
\citep{reinert2009multivariate}, there is a 3-times differentiable
function $g: \R^{2K} \to \R$ satisfying the Stein identity
\begin{equation}\label{stein0}
\E \{f(\w)\}-
\E\{f(V^{1/2}\w)\} = \E\left\{\nabla^{\T} V \nabla
g(\w) - \w^{\T} \nabla g(\w)\right\}
\end{equation}
and
\[
\left|\frac{\partial^k
    g(\x)}{\prod_{j = 1}^k \partial x_{i_j}}\right| \leq \frac{1}{k}\left|\frac{\partial^k
    f(\x)}{\prod_{j = 1}^k \partial x_{i_j}}\right|
\]
for all $\x =(x_1,\ldots,x_{2K})^{\T} \in \R^{2K}$, $k= 1,2,3$, and $i_j
\in \{1,\ldots,k\}$.  To
prove the theorem, we bound 
\begin{equation}\label{stein}
S = \E\left\{\nabla^{\T} V \nabla
g(\w) - \w^{\T} \nabla g(\w)\right\}.
\end{equation}

Next, we use exchangeability.  
Let
$\bz' = (\zeta_1',\ldots,\zeta_{d}')^{\T}$ be an independent copy
of $\zeta$, and let $\i\in \{1,\ldots,d\}$ be an independent and
uniformly distributed random index.  Define the vector $\w' \in \R^{2K}$ exactly
as we defined $\w$, except that $\zeta_{\i}$ is replaced with
$\zeta_{\i}'$ throughout.  More precisely, let $\mathbf{e}_i \in \R^d$ be the
$i$-th standard basis vector in $\R^d$ and define
\begin{align*}
w_k' & = \{\bz + (\zeta_{\i}' - \zeta_{\i})\mathbf{e}_{\i}\}^{\T}Q_k \{\bz +
       (\zeta_{\i} ' - \zeta_{\i})\mathbf{e}_{\i}\}  = w_k + 2(\zeta_{\i} ' -
       \zeta_{\i})\mathbf{e}_{\i}^{\T}Q_k\bz + \mathbf{e}_{\i}^{\T}Q_k\mathbf{e}_{\i} (\zeta_{\i} ' - \zeta_{\i})^2, \\
\widecheck{w}_k' & = \{\bz + (\zeta_{\i} ' - \zeta_{\i})\mathbf{e}_{\i}\}^{\T}\widecheck{Q}_k
       \{\bz + (\zeta_{\i} ' - \zeta_{\i})\mathbf{e}_{\i}\} - \tr(Q_k)= \widecheck{w}_k + \mathbf{e}_{\i}^{\T}Q_k\mathbf{e}_{\i}\{(\zeta_{\i} ')^2 - \zeta_{\i}^2\},
\end{align*}
for $k = 1,...,K$.  Then $
\w' = (w_1',\widecheck{w}_1',...,w_K',\widecheck{w}_K')^{\T} \in \R^{2K}$.

Let's compute $\E(w_k' - w_k|\bz)$ and $\E(\widecheck{w}_k' -
\widecheck{w}_k|\bz)$. Since
\begin{align}\label{d1}
w_k' - w_k &  =   2(\zeta_{\i} ' - \zeta_{\i})\mathbf{e}_{\i}^{\T}Q_k\bz +
  \mathbf{e}_{\i}^{\T}Q_k\mathbf{e}_{\i} (\zeta_{\i} ' - \zeta_{\i})^2\\ 
\widecheck{w}_k' - \widecheck{w}_k 
& =  \mathbf{e}_{\i}^{\T}Q_k\mathbf{e}_{\i}\{(\zeta_{\i} ')^2 - \zeta_{\i}^2\},
\end{align}
it follows that
\begin{align*}
\E(w_k' - w_k|\bz) & = \E\left\{\left.2(\zeta_{\i} ' -
       \zeta_{\i})\sum_{j = 1}^{d} q_{\i j}^{(k)}\zeta_j + q_{\i\i}^{(k)}(\zeta_{\i} ' -
                     \zeta_{\i})^2\right|\bz\right\} \\
& = -\frac{2}{d}\sum_{i,j = 1}^{d} q_{ij}^{(k)} \zeta_i\zeta_j + \frac{1}{d}
  \sum_{i = 1}^{N} q_{ii}^{(k)}\zeta_i^2 + \frac{1}{N}\tr(Q_k) \\
& = -\frac{2}{d}w_k + \frac{1}{d}\widecheck{w}_k
\end{align*} 
and
\[
\E(\widecheck{w}_k' - \widecheck{w}_k|\bz)  =
                                     \E\left[\left. q_{\i\i}^{(k)}\{(\zeta_{\i} ')^2
                                     - \zeta_{\i}^2\}\right|\bz\right]  = - \frac{1}{d}\widecheck{w}_k.
\]
Thus, 
\begin{equation}\label{ediff}
\E(\w' - \w|\bz)   = -\L_K\w,
\end{equation}
where
\begin{align*}
\L_1 & = \left[\begin{array}{rr} \frac{2}{d} & -\frac{1}{d} \\ 0 &
                                                                   \frac{1}{d}\end{array}\right]
                                                                   \in
                                                                   \R^{2
                                                                   \times 2}
                                                                  , \
                                                                   \ 
\L_K  = \left[\begin{array}{cccc}\L_1 & 0 & \cdots & 0 \\ 0 & \L_1 &
                                                                    \cdots
                                                  & 0 \\ \vdots &
                                                                  \vdots
                                         & \ddots & \vdots \\ 0 & 0 &
                                                                      \cdots
                                                  & \L_1 \end{array}\right]
                                                                \in \R^{2K
  \times 2K}.
\end{align*}

Next, we will work our way back to the Stein identity
\eqref{stein} and take advantage of the identity we just
derived \eqref{ediff}.  Define
\[
G(\x',\x) = \frac{1}{2}(\x' - \x)^{\T}\L_K^{-\T}\{\nabla g(\x') +
\nabla g(\x)\}, \ \ \x,\x' \in \R^{2K}.
\]
By exchangeability, $\E\{G(\w',\w)\} = 0$.  Thus,
\begin{align}\nonumber
0 & = \frac{1}{2}\E\left[(\w' -\w)^{\T}\L_K^{-\T}\{\nabla g(\w') +
\nabla g(\w)\}\right] \\ \nonumber
& = \E\left\{(\w' -\w)^{\T}\L_K^{-\T}\nabla
  g(\w)\right\} + \frac{1}{2}\E\left[(\w' - \w)^{\T}\L_K^{-\T}\{\nabla g(\w') -
\nabla g(\w)\}\right] \\ \label{0decomp}
& = -\E\left\{\w^{\T}\nabla g(\w)\right\}+ \frac{1}{2}\E\left[(\w' - \w)^{\T}\L_K^{-\T}\{\nabla g(\w') -
\nabla g(\w)\}\right].
\end{align}
where we used (\ref{ediff}) in the last step.  Now we Taylor expand and
use some other basic manipulations to
get a direct connection between \eqref{stein} and \eqref{0decomp}.
Indeed, by Taylor's theorem,
\begin{align*}
(\w' - \w)^{\T}\L_K^{-\T}\{\nabla g(\w') & -
\nabla g(\w)\}  \\ & = (\w' -\w)^{\T}\L_K^{-\T}\nabla^2
g(\w) (\w' - \w)+  (\w' -\w)^{\T}\L_K^{-\T} \mathbf{r}^{(2)} \\
& = \tr\left[(\w' - \w)(\w' - \W)^{\T}\L_K^{-\T}\nabla^2g(\w)\right] +  (\w' -\w)^{\T}\L_K^{-\T} \mathbf{r}^{(2)},
\end{align*}
where $\mathbf{r}^{(2)} = (r_1^{(2)},...,r_{2K}^{(2)})^{\T}$, 
\[
r_k^{(2)} =  (\w' -\w)^{\T}R_k^{(2)}(\w' -\w),
\]
and each $R_k^{(2)} = (R_{ijk}^{(2)})$ is a $2K \times 2K$ matrix with
$\vert R_{ijk}^{(2)}\vert\leq (1/2)\vert f\vert_3$.  Thus, by \eqref{0decomp},
\begin{align}\label{stein1}
\E\left\{\w^{\T}\nabla g(\w)\right\} & = \frac{1}{2}\E\tr\left[(\w' - \w)(\w' - \w)^{\T}\L_K^{-\T}\nabla^2g(\w)\right] \\ \nonumber
& \qquad + \frac{1}{2}\E\left\{ (\w' - \w)^{\T}\L_K^{-\T} \mathbf{r}^{(2)}\right\}.
\end{align}
Since
\begin{equation}\label{exvar}
\E\left\{(\w' - \w)(\w' -
  \w)^{\T}\right\} = 2 \E\left\{\w(\w -
   \w')^{\T}\right\} 
 = 2\E\left(\w\w^{\T}\L_K^{\T}\right) = 2V\L_K^{\T},
\end{equation}
it follows that 
\begin{align}\nonumber
\E\left\{\nabla^{\T} V \nabla g(\w)\right\} &=
\frac{1}{2}\E\left[\nabla^{\T} \E\left\{(\w' - \w)(\w' -
  \w)^{\T}\right\} \L_K^{-\T} \nabla g(\w)\right] \\ \label{stein2}
& = \frac{1}{2}\E\tr\left[\E\left\{(\w' - \w)(\w' -
  \w)^{\T}\right\}\L_K^{-\T}\nabla^2g(\w)\right].
\end{align}
Combining \eqref{stein} and \eqref{stein1},\eqref{stein2} yields
\begin{align} \nonumber
S &  = \E\left\{\nabla^{\T} V \nabla
g(\w) - \w^{\T} \nabla
                      g(\w)\right\} \\ \nonumber
& = \frac{1}{2}\E\tr\left[\E\left\{(\w' - \w)(\w' -
  \w)^{\T}\right\}\L_K^{-\T}\nabla^2g(\w)\right] \\ \nonumber
& \qquad-  \frac{1}{2}\E\tr\left\{(\w' - \w)(\w' - \w)^{\T}\L_K^{-\T}\nabla^2g(\w)\right\}-
 \frac{1}{2}\E\left\{ (\w' - \w)^{\T}\L_K^{-\T} \mathbf{r}^{(2)}\right\}\\
& = \frac{1}{2}S_1 + \frac{1}{2}S_2, \label{stein_next}
\end{align}
where $S_1  = -(1/2)\E\tr\left\{T\L_K^{-\T}\nabla^2g(\w)\right\}$,  
$S_2  = -(1/2)\E\left\{ (\w' - \w)^{\T}\L_K^{-\T}
  \mathbf{r}^{(2)}\right\}$, 
and $T = \E\{(\w' - \w)(\w' - \w)^{\T}|\bz\} - \E\{(\w' - \w)(\w' - \w)^{\T}\}$.
Thus, in order to bound $S$ it suffices to bound $S_1,S_2$.  

First, we work with $S_1$.  Notice that
\begin{align*}
\left|\E\tr\left\{T\L_K^{-\T}\nabla^2g(\w)\right\}\right| & \leq \frac{ |f|_2 }{2}\Vert
\L_K^{-\T}\Vert_{\HS}\E(\Vert T\Vert_{\HS}) \\ &  =
\frac{|f|_2K^{1/2}}{2}\tr\{(\L_1^{\T}\L_1)^{-1}\}^{1/2}\E(\Vert
                                       T\Vert_{\HS}) \\ & 
\leq \frac{3}{5} K^{1/2}d|f|_2\E(\Vert T\Vert_{\HS}),
\end{align*}
where we have used the fact that $\tr\{(\L_1^{\T}\L_1)^{-1} = (3/2)d^2$.
Thus, 
\begin{equation}\label{s1b1}
|S_1| \leq \frac{3}{10} K^{1/2}d|f|_2\E(\Vert T\Vert_{\HS}).
\end{equation}
It requires a bit more work to bound $\E(\Vert T \Vert_{\HS})$ in \eqref{s1b1}.  

The matrix $T$ can be written as 
\[
T = \left[\begin{array}{cccc}
                                                 T_{11} & T_{12} &
                                                                   \cdots
                                                 & T_{1K} \\ T_{21} &
                                                                      T_{22}
                                                                 &
                                                                   \cdots
                                                 & T_{2K} \\ \vdots &
                                                                      \vdots
                                                                 &
                                                                   \ddots
                                                 & \vdots \\ T_{K1} &
                                                                      T_{K2}
                                                                 &
                                                                   \cdots
                                                 &
                                                   T_{KK} \end{array}\right],
                                               \mbox{ where } T_{kl} = \left[\begin{array}{cc} t_{11}^{kl} & t_{12}^{kl} \\
                 t_{21}^{kl} & t_{22}^{kl}\end{array}\right],
\] 
and
\begin{align}\label{tentry11}
t_{11}^{kl} & = \E\left\{(w_k' - w_k)(w_l' - w_l)|\bz\right\} -
              \E\left\{(w_k' - w_k)(w_l' - w_l)\right\}, \\ \label{tentry12}
t_{12}^{kl} & = \E\{(w_k' - w_k)(\widecheck{w}_l' - \widecheck{w}_l)|\bz\} - \E\{(w_k' -
              w_k)(\widecheck{w}_l' - \widecheck{w}_l)\}, \\ \label{tentry21}
t_{21}^{kl} & = \E\{(\widecheck{w}_k' - \widecheck{w}_k)(w_l' - w_l)|\bz\} - \E\{(\widecheck{w}_k' -
              \widecheck{w}_k)(w_l' - w_l)\}, \\ \label{tentry22}
t_{22}^{kl} & = \E\{(\widecheck{w}_k' - \widecheck{w}_k)(\widecheck{w}_l' - \widecheck{w}_l)|\bz\} - \E\{(\widecheck{w}_k' -
              \widecheck{w}_k)(\widecheck{w}_l' - \widecheck{w}_l)\}. 
\end{align}
We conclude that 
\[
\E(\Vert T\Vert_F) \leq \left[\sum_{k,l = 1}^K \sum_{i,j=1}^2
  \E\{(t_{ij}^{kl})^2\}\right]^{1/2} 
\]
and, furthermore, if we can control each of the terms $\E\{(t_{ij}^{kl})^2\}$, then
a bound on $\E(\Vert T\Vert_F)$ will follow.  Fortunately, Lemma
\ref{lemma:Tbd} from the Supplementary Material gives bounds for on
these moments.  Indeed, let $c(\g) =
4096(\g+1)^8$ and $q_{\max} = \max_{k = 1,\ldots,K} \Vert Q\Vert$.  It follows from Lemma \ref{lemma:Tbd} that
\begin{align*}
\E(\Vert T\Vert_F) & \leq \frac{Kq_{\max}^2}{d^{1/2}} 
                   \left[8\{108c(\g)^2 + 763c(\g) + 930\} +
                   4\{24c(\g)^2 + 69c(\g) + 1\} + c(\g) + 4\right]^{1/2} \\
& \leq \frac{Kq_{\max}^2}{d^{1/2}} \left\{960 c(\g)^2 +
  6381 c(\g) + 7448\right\}^{1/2} \\
& \leq \frac{Kq_{\max}^2}{d^{1/2}} \{65c(\g) + 104\}.
\end{align*}
Combining this bound on $\E(\Vert T\Vert_F)$ with \eqref{s1b1} yields
\begin{equation}\label{s1b2}
|S_1| \leq   4\{5c(\g) + 8\} K^{3/2}d^{1/2}\vert f\vert_2q_{\max}^2.
\end{equation}

Next, we bound $S_2$.   First consider the basic inequalities
\begin{align*}
\vert S_2 \vert & \leq\frac{1}{2} \Vert \L_K^{-\T}\Vert \E\left(\Vert
                  \w' - \w \Vert \Vert \mathbf{r}^{(2)}\Vert\right)
  \\
& \leq \frac{d}{4}\left\Vert \left[\begin{array}{cc} 1 & 0 \\ 1 &
                                     2 \end{array}\right]\right\Vert
                                                                        \E\left\{\Vert
                  \w' - \w \Vert^3 \left(\sum_{k = 1}^{2K} \Vert
                                                                  R_k^{(2)}\Vert^2\right)^{1/2} \right\}\\
& \leq \frac{5\cdot 2^{1/2}}{8} K^{3/2}d\vert f\vert_3 \E\left(\Vert\w' - \w\Vert^3\right).
\numberthis\label{s2b1}
\end{align*}
Now focus on bounding $\E(\Vert \w' - \w \Vert^3)$.  Each inequality
in the following chain is elementary: 
\begin{align*}
\E(\Vert \w' - \w \Vert^3) & = \E\left[\left\{\sum_{k = 1}^K (w_k' - w_k)^2
                             + \sum_{k = 1}^K (\widecheck{w}_k' -
                             \widecheck{w}_k)^2\right\}^{3/2}\right] \\
& = \E\Bigg[\left\{\sum_{k = 1}^K (2(\zeta_{\i}' -
                       \zeta_{\i})\mathbf{e}_{\i}^{\T}Q_k\bz +
                       \mathbf{e}_{\i}^{\T}Q_k\mathbf{e}_I(\zeta_{\i}' -
                       \zeta_{\i})^2)^2
                             \right. \\
& \qquad \qquad \left.+ \sum_{k = 1}^K (\mathbf{e}_{\i}^{\T}Q_k\mathbf{e}_I)^2\{(\zeta_{\i}')^2
                       - \zeta_{\i}^2\}^2\right\}^{3/2}\Bigg] \\
& \leq 2^{3/2}\E\left[\left\{8\sum_{k = 1}^K \{(\zeta_{\i}')^2 +
                       \zeta_{\i}^2\}(\mathbf{e}_I^{\T}Q_k\bz)^2 +
  9\sum_{k = 1}^K \Vert Q_k\Vert^2\{(\zeta_{\i}')^4
                       + \zeta_{\i}^4\}\right\}^{3/2}\right] \\
& = \frac{2^{3/2}}{d}\sum_{i = 1}^d \E\left[\left\{8\sum_{k = 1}^K \{(\zeta_i')^2 +
                       \zeta_i^2\}(\mathbf{e}_i^{\T}Q_k\bz)^2 +
  9\sum_{k = 1}^K \Vert Q_k\Vert^2\{(\zeta_i')^4
                       + \zeta_i^4\}\right\}^{3/2}\right] \\
& \leq \frac{150K^{1/2}}{d}\sum_{i = 1}^d \sum_{k = 1}^K
  \E\{(|\zeta_i'|^3 + |\zeta_i|^3)|\mathbf{e}_i^{\T}Q_k\bz|^3\} +
 \frac{300K^{1/2}}{d}\sum_{i = 1}^d \sum_{k = 1}^K \Vert Q_k
  \Vert^3\E(\zeta_i^6) \\
& \leq \frac{300K^{1/2}c(\g)^{1/2}}{d}\sum_{i = 1}^d \sum_{k = 1}^K \sqrt{\E\{(\mathbf{e}_i^{\T}Q_k\bz)^6\}} + 300K^{3/2}c(\g)q_{\max}^3.
\end{align*}
It remains to bound $\E\{(\mathbf{e}_i^{\T}Q_k\bz)^6\}$.  This is
accomplished by a version of Khintchine's inequality, given in
Corollary 5.12 of \citep{vershynin2010introduction}.  It implies that
there is an absolute constant $C_1 > 0$ such that
\[
\E\{(\mathbf{e}_i^{\T}Q_k\bz)^6\} \leq C_1^2
(\g+1)^6\left\{\sum_{j = 1}^d (q_{ij}^{(k)})^2\right\}^{3}.
\]
Thus,
\begin{align*}
\E(\Vert \w' - \w \Vert^3) & \leq \frac{300CK^{1/2}c(\g)^{1/2}(\g+1)^3}{d} \sum_{k
                             = 1}^K \sum_{i = 1}^d  
\left\{\sum_{j = 1}^d (q_{ij}^{(k)})^2\right\}^{3/2} + 300K^{3/2}c(\g)q_{\max}^3 \\
& \leq 300K^{3/2}\{C_1c(\g)^{1/2}(\g+1)^3 + c(\g)\}q_{\max}^3. 
\end{align*}
Combining this with \eqref{s2b1} yields
\begin{equation}\label{s2b2}
|S_2| \leq  266K^{3}q_{\max}^3\{C_1c(\g)^{1/2}(\g+1)^3 + c(\g)\}\vert f\vert_3 d.
\end{equation}
Finally, combining \eqref{stein0}--\eqref{stein}, \eqref{stein_next}, \eqref{s1b2}, and \eqref{s2b2},
we obtain
\begin{align*}
\left\vert\E\{f(\w)\} - \E\{f(V^{1/2}\z)\}\right\vert & \leq 2\{5c(\g)+8\}K^{3/2}d^{1/2}\vert
f\vert_2 q_{\max}^2\\
& \qquad + 133K^{3}\{C_1c(\g)^{1/2}(\g+1)^3 + c(\g)\}\vert f\vert_3 d q_{\max}^3\\
& \leq C (\g+1)^8\left(K^{3/2}d^{1/2}\vert
  f\vert_2q_{\max}^2 + K^3d\vert f
  \vert_3q_{\max}^3\right)
\end{align*}
for some absolute constant $C > 0$, which proves the theorem.

\section{Proof of Proposition \ref{prop:RE_conc}} \label{app:RE_conc}

The proof of Proposition \ref{prop:RE_conc} is based on several
lemmas, which are stated precisely and proved in the Supplementary
Material.  Several of these lemmas (Lemmas
\ref{lemma:s2_unif}, \ref{lemma:sld}, and \ref{lemma:llik_dev})
are basicallly corollaries of our uniform concentration bound for quadratic forms, Theorem
\ref{thm:QF_conc}.  

To prove the proposition, first let $r \geq 0$. 
Since $\hat{\s}^2 = \s_*^2(\hat{\eta}^2)$ and $\s_0^2 =
\s_0^2(\eta_0^2)$, it follows that
\begin{align*}
\left\{\Vert \hat{\bth} -
\bth_0\Vert > r\right\} & = \left\{(\hat{\eta}^2 - \eta_0^2)^2 + (\hat{\s}^2 -
                    \s_0^2)^2 > r^2\right\} \\
& \subseteq \left\{\vert\hat{\eta}^2 - \eta_0^2\vert >
  \frac{r}{\sqrt{2}}\right\} \cup \left\{\vert\hat{\s}^2 - \s_0^2\vert >
  \frac{r}{\sqrt{2}}\right\} \\
& = \left\{\vert\hat{\eta}^2 - \eta_0^2\vert >
  \frac{r}{\sqrt{2}}\right\} \cup \left\{\left\vert \s_*^2(\hat{\eta}^2)- \s_0^2(\eta_0^2)\right\vert >
  \frac{r}{\sqrt{2}}\right\} \\
& \subseteq \left\{\vert\hat{\eta}^2 - \eta_0^2\vert >
  \frac{r}{\sqrt{2}}\right\} \cup \left\{\sup_{0 \leq \eta^2 < \infty}
  \left\vert \s_*^2(\eta^2) - \s_0^2(\eta^2)\right\vert>
  \frac{r}{2\sqrt{2}}\right\} \\
& \qquad \cup \left\{\left\vert\s_0^2(\hat{\eta}^2)- \s_0^2(\eta_0^2)\right\vert >   \frac{r}{2\sqrt{2}}\right\}.
\end{align*}
Additionally, since
\[
\left\vert \s_0^2(\hat{\eta}^2)-
  \s_0^2(\eta_0^2)\right\vert   \leq \frac{\s_0^2}{n} \sum_{i = 1}^n
                                  \left\vert
                                  \frac{\eta_0^2\l_i+1}{\hat{\eta}^2\l_i
                                                + 1} -1\right\vert = \frac{\s_0^2}{n} \sum_{i = 1}^n
                                  \frac{\l_i \vert \hat{\eta}^2-\eta_0^2\vert}{\hat{\eta}^2\l_i +
  1} \leq \s_0^2\l_1\vert \hat{\eta}^2 - \eta_0^2\vert,
\]
we conclude that
\begin{align}\nonumber
\left\{\Vert \hat{\bth} -
\bth_0\Vert > r\right\} & \subseteq  \left\{\vert\hat{\eta}^2 - \eta_0^2\vert >
  \frac{r}{2\sqrt{2}(\s_0^2 + 1)(\l_1+1)}\right\} \\ \label{inclusion1}& \qquad \cup \left\{\sup_{0 \leq \eta^2 < \infty}
  \left\vert \s_*^2(\eta^2) - \s_0^2(\eta^2)\right\vert >
  \frac{r}{2\sqrt{2}}\right\}.
\end{align}
We bound the probability of the two events on
the right-hand side in \eqref{inclusion1}.  

Bounding the probability of the second event in \eqref{inclusion1} is
easy, thanks to Theorem \ref{thm:QF_conc} and Lemma \ref{lemma:s2_unif}.  By Lemma \ref{lemma:s2_unif}, there is a constant $0 < C_0 < \infty$ such that
\begin{equation}\label{Hbd1}
\P\left\{\left.\sup_{0 \leq \eta^2 < \infty}
  \left\vert \s_*^2(\eta^2) - \s_0^2(\eta^2)\right\vert >
  \frac{r}{2\sqrt{2}}\right\vert X\right\} \leq C_0
\exp\left\{\frac{-n\omega(\L)^2}{C_0\g^2(\g^2+1)}\cdot \frac{r^2}{r+1}\right\},
\end{equation}
where $\omega(\L)$ is defined in \eqref{omega}.

Bounding the probability of the first event on the right in \eqref{inclusion1}
takes more work.  In fact, we further decompose the event as follows: 
\begin{equation}\label{E1}
\left\{\vert\hat{\eta}^2 - \eta_0^2\vert >
  \frac{r}{2\sqrt{2}(\s_0^2 + 1)(\l_1+1)}\right\} \subseteq A^- \cup A^+,
\end{equation}
where
\begin{align*}
A^- & = \left\{ \frac{r}{2\sqrt{2}(\s_0^2 + 1)(\l_1+1)}<
      \vert\hat{\eta}^2 - \eta_0^2\vert \leq 
  \frac{\eta_0^2}{2\sqrt{2}(\s_0^2 + 1)(\l_1+1)}\right\} , \\
A^+ & = \left\{\vert\hat{\eta}^2 - \eta_0^2\vert >
  \frac{\eta_0^2}{2\sqrt{2}(\s_0^2 + 1)(\l_1+1)}\right\}. 
\end{align*}
To bound $\P(A^-)$, we use properties of the profile score function 
\begin{align}\nonumber
H_*(\eta^2) & = 2\s_*^2(\eta^2)\left\{\frac{d}{d\eta^2} \ell_*(\eta^2)
              \right\}\\  \label{score}
& = 
\frac{1}{n}\y^{\T}\left(\frac{1}{p}XX^{\T}\right)\left(\frac{\eta^2}{p}XX^{\T}
  + I\right)^{-2}\y  \\ \nonumber
& \qquad -
  \s_*^2(\eta^2)\frac{1}{n}\tr\left\{\left(\frac{1}{p}XX^{\T}\right)\left(\frac{\eta^2}{p}XX^{\T}
  + I\right)^{-1}\right\}.
\end{align}
In particular, let $A(r_1,r_2) = \{\mbox{There exists } \eta^2 > 0 \mbox{ such that }
r_1 <\vert
\eta^2 - \eta_0^2\vert \leq r_2 \mbox{ and } H_*(\eta^2) = 0\}$ and observe that
\[
A^- \subseteq A\left\{\frac{r}{2\sqrt{2}(\s_0^2+1)(\l_1+1)},\frac{\eta_0^2}{2\sqrt{2}(\s_0^2+1)(\l_1+1)}\right\}.
\]
Furthermore,
\[
A(r_1,r_2) \subseteq \left\{\sup_{0 \leq \eta^2 < \infty} \vert H_*(\eta^2) - H_0(\eta^2) \vert \geq \inf_{r_1 <\vert \eta^2 - \eta_0^2\vert
    \leq r_2} \vert H_0(\eta^2)\vert \right\},
\]
where $H_0(\eta^2) = \E\{H_*(\eta^2)\vert X\}$.   By Lemma
\ref{lemma:H_bd} in the Supplementary Material, if 
\[
\frac{r}{2\sqrt{2}(\s_0^2+1)(\l_1+1)}  < \vert \eta^2
  - \eta_0^2\vert \leq
  \frac{\eta_0^2}{2\sqrt{2}(\s_0^2+1)(\l_1+1)},
\]
then 
\[
\vert H_0(\eta^2) \vert 
> \frac{\s_0^2 r}{32\sqrt{2}(\s_0^2+1)(\l_1+1)^5(\eta_0^2 +1)^4}\vfrak(\L).
\]
Thus,
\[
   A^-                                                        \subseteq\left\{\sup_{0
                                                           \leq \eta^2
                                                           <\infty}
                                                           \vert
                                                           H_*(\eta^2)
                                                           -
                                                           H_0(\eta^2)\vert
                                                           > \frac{\s_0^2 r}{32\sqrt{2}(\s_0^2+1)(\l_1+1)^5(\eta_0^2 +1)^4}\vfrak(\L)\right\}.
\]
Now we can use Lemma \ref{lemma:sld}, which is an application of
Theorem \ref{thm:QF_conc}, to bound the probability of the right-hand
side above.  We conclude that there is a
constant $0 < C_1^- < \infty$ such that
\begin{equation} \label{Anegbd1}
\P(A^-\vert X)  \leq C_1^-
  \exp\bigg[-\frac{n\s_0^4}{C_1^-\g^2(\g^2+1)(\s_0^2+1)^2
               (\eta_0^2+1)^8(\l_1+1)^{16}}  
  \cdot\frac{\vfrak(\L)^2}{\vfrak(\L)+1} \cdot \frac{r^2}{r+1}\bigg].
\end{equation}

To bound $\P(A^+\vert X)$, we consider cases where $n = n_0$ and $n < n_0$
separately.  First assume that $n = n_0$.  Lemma \ref{lemma:llik_cty}
(a) from the Supplementary Material 
implies that
\begin{align*}
A^+ & \subseteq \left\{\ell_0(\eta_0^2) - \ell_0(\hat{\eta}^2) >
    \frac{\eta_0^4\chi(\eta_0^2,\L) \vfrak(\L)}{8(\s_0^2+1)^2(\l_1+1)^2(\eta_0^2 + 1)^2} \right\} \\
& \subseteq \left\{\ell_0(\eta_0^2) - \ell_*(\eta_0^2) >
    \frac{\eta_0^4\chi(\eta_0^2,\L) \vfrak(\L)}{16(\s_0^2+1)^2(\l_1+1)^2(\eta_0^2 + 1)^2}\right\} \\
& \qquad \cup
  \left\{\ell_0(\hat{\eta}^2) - \ell_*(\hat{\eta}^2) >
    \frac{\eta_0^4\chi(\eta_0^2,\L) \vfrak(\L)}{16(\s_0^2+1)^2(\l_1+1)^2(\eta_0^2 + 1)^2}\right\} \\
&\subseteq \left\{\sup_{0 \leq \eta^2 < \infty} \vert\ell_0(\eta^2) - \ell_*(\eta^2)\vert >
    \frac{\eta_0^4\chi(\eta_0^2,\L) \vfrak(\L)}{16(\s_0^2+1)^2(\l_1+1)^2(\eta_0^2 + 1)^2}\right\} .
\end{align*}
Next we apply Lemma \ref{lemma:llik_dev}, which depends on Theorem
\ref{thm:QF_conc}.  Lemma \ref{lemma:llik_dev} (a) implies that there
is a constant $0 < C_1^+ < \infty$ such that 
\begin{align}\nonumber
\P (A^+\vert X) & \leq \P\left\{\left.\sup_{0 \leq \eta^2 < \infty} \vert\ell_0(\eta^2) - \ell_*(\eta^2)\vert >
    \frac{\eta_0^4\chi(\eta_0^2,\L) \vfrak(\L)}{16(\s_0^2+1)^2(\l_1+1)^2(\eta_0^2 +
                                                               1)^2}\right\vert
                                                               X\right\}
  \\ \label{eta_bda}
& \leq
  C_1^+\exp\left[-\frac{n}{C_1^+}\cdot\frac{\s_0^4\eta_0^8\omega(\L)^2\chi(\eta_0^2,\L)^2}{\g^2(\g^2+1)(\s_0^2+1)^5(\eta_0^2+1)^4(\l_1+1)^6}\cdot
  \frac{\vfrak(\L)^2}{\{\vfrak(\L) + 1)\}^2} \right].
\end{align}
Part (a) of the proposition follows by combining \eqref{inclusion1}--\eqref{E1} and \eqref{Anegbd1}--\eqref{eta_bda}.

To prove part (b) of the proposition, assume that $n_0 < n$.  Since 
\[
\vfrak(\L) \geq \l_{n_0}^2\left(1 - \frac{n_0}{n}\right)\frac{n_0}{n},
\]
the inequality \eqref{Anegbd1} implies that
\begin{equation}\label{Anegbd2}
\P(A^-\vert X)  \leq C_2^-
  \exp\bigg[-\frac{n\s_0^4(1 - n_0/n)^2(n_0/n)^2}{C_2^-\g^2(\g^2+1)(\s_0^2+1)^2
               (\eta_0^2+1)^8(\l_1+1)^{16}(\l_{n_0}^{-1}+1)^4}
  \cdot \frac{r^2}{r+1}\bigg].
\end{equation}
Additionally, by Lemma
\ref{lemma:llik_cty} (b), 
\[
A^+
 \subseteq \left\{\sup_{0 \leq \eta^2 < \infty} \vert\ell_0(\eta^2) - \ell_*(\eta^2)\vert >
    \frac{\chi(\eta_0^2,\L) (1 - n_0/n)(n_0/n)\eta_0^4}{16(\s_0^2+1)^2(\l_1+1)^2(\eta_0^2 + 1)^2}\right\} .
\]
Hence, Lemma \ref{lemma:llik_dev} (b) implies that there is a constant $0 < C_2^+ <
\infty$ such that 
\begin{align}\label{eta_bdb}
\P (A^+\vert X) & \leq
                                                               \P\left\{\left.\sup_{0
                                                               \leq
                                                               \eta^2
                                                               <
                                                               \infty}
                                                               \vert\ell_0(\eta^2)
                                                               -
                                                               \ell_*(\eta^2)\vert
                                                               > \frac{\chi(\eta_0^2,\L) (1 - n_0/n)(n_0/n)\eta_0^4}{16(\s_0^2+1)^2(\l_1+1)^2(\eta_0^2 + 1)^2}\right\vert
                                                               X\right\}
  \\ \nonumber
& \leq C_2^+\exp\left[-\frac{n}{C_2^+} \cdot \frac{\s_0^4\eta_0^8\chi(\eta_0^2,\L)\omega(\L)^2}{\g^2(\g^2+1)(\s_0^2+1)^3(\eta_0^2+1)^4(\l_1+1)^4} \cdot \left(1 -
  \frac{n_0}{n}\right)^4\left(\frac{n_0}{n}\right)^2\right].
\end{align}
Part (b) follows from
\eqref{inclusion1}--\eqref{E1} and \eqref{Anegbd2}--\eqref{eta_bdb}.  This completes the
proof of Proposition \ref{prop:RE_conc}.

\section{Proof of Proposition
  \ref{prop:RE_napprox}} \label{app:RE_napprox}

Proposition \ref{prop:RE_napprox} is a direct application of Theorem
\ref{thm:QF_napprox}, in conjunction with some basic Taylor
expansions.  However, keeping track of all the quantities to be
bounded does require some effort.  
Let $f \in \C_b^3(\R^2)$.
By \eqref{taylor_score}, on the event that $\hat{\eta}^2 \neq 0$, 
\[
\hat{\bth} - \bth_0  = -\J(\bth_0)^{-1}S(\bth_0) - \J(\bth_0)^{-1}\{J(\bth_0) -
  \J(\bth_0)\}(\hat{\bth} - \bth_0) - \J(\bth_0)^{-1}\r,
\]
where $\r \in \R^2$ is a remainder term.  
Furthermore, by Taylor's theorem,
\[
\r =\frac{1}{2}\left[\begin{array}{c}
(\hat{\bth}
- \bth_0)^{\T}\left\{\frac{\partial^2}{\partial\bth^2}S_1(\bth^*)\right\}(\hat{\bth}
- \bth_0) \\(\hat{\bth}
- \bth_0)^{\T}\left\{\frac{\partial^2}{\partial\bth^2}S_2(\bth^*)\right\}(\hat{\bth}
- \bth_0) \end{array}\right]
\]
and $\bth^* \in \R^2$ is on the line segment connecting
$\hat{\bth}$ and $\bth_0$.  Thus, defining $\w = \sqrt{n}(\hat{\bth} -
\bth_0)$ and applying Taylor's theorem again,
\begin{align*}
f(\w) & = f\left\{-\sqrt{n}\J(\bth_0)^{-1}S(\bth_0)\right\} \\
& \qquad - \sqrt{n}\left[\r_f^{\T} \J(\bth_0)^{-1}\{J(\bth_0) -
  \J(\bth_0)\}(\hat{\bth} - \bth_0) +\r_f^{\T} \J(\bth_0)^{-1}\r\right],
\end{align*}
where $\r_f\in \R^2$ and $\Vert \r_f \Vert \leq \sqrt{2}\vert
f\vert_1$.  Now define the event
\[
A = \left\{\Vert \hat{\bth} - \bth_0\Vert \leq
  \s_0^2\frac{\log(n)}{2\sqrt{n}}\right\}
\] 
and let $\1_A$ denote the indicator of $A$.  Then
\begin{align*}
\E\{f(\w)\vert X\} & = \E\{f(\w)\1_A\vert X\} + \E\{f(\w)
                     \1_{A^c}\vert X\} \\
& =
  \E\left[f\left.\left\{-\sqrt{n}\J(\bth_0)^{-1}S(\bth_0)\right\}\right\vert
  X\right] -  \E\left[f\left.\left\{-\sqrt{n}\J(\bth_0)^{-1}S(\bth_0)\1_{A^c}\right\}\right\vert
  X\right] \\
& \qquad  -\sqrt{n}\E\left[\left.\r_f^{\T}\J(\bth_0)^{-1}\{J(\bth_0) -
  \J(\bth_0)\}(\hat{\bth} - \bth_0)\1_A + \r_f^{\T}\J(\bth_0)^{-1}\r\1_A \right\vert
  X\right] \\
& \qquad + \E\{f(\w)\1_{A^c}\vert X\}
\end{align*}
and it follows that
\begin{equation}\label{normal_decomp}
\left\vert\E\{f(\w)\vert X\} - \E\{f(\Psi^{1/2}\z_2)\vert X\}\right\vert \leq \Delta_1 +
\Delta_2  + \Delta_3 + \Delta_4,
\end{equation}
where 
\begin{align*}
\Delta_1 & = 
 \left\vert\E\left[\left.f\left\{-\sqrt{n}\J(\bth_0)^{-1}S(\bth_0)\right\}\right\vert
  X\right] - \E\{f(\Psi^{1/2}\z_2)\vert X\}\right\vert, \\
\Delta_2 & = \left\vert\E\{f(\w)\1_{A^c}\vert X\}\right\vert + \left\vert\E\left[\left.f\left\{-\sqrt{n}\J(\bth_0)^{-1}S(\bth_0)\right\}\1_{A^c}\right\vert
  X\right]\right\vert, \\
\Delta_3 & = \sqrt{n}\left\vert\E\left[\left.\r_f^{\T}\J(\bth_0)^{-1}\{J(\bth_0) -
  \J(\bth_0)\}(\hat{\bth} - \bth_0)\1_A\right\vert X\right]\right\vert, \\
\Delta_4 &= \sqrt{n}\left\vert\E\left\{\left.\r_f^{\T}
              \J(\bth_0)^{-1}\r\1_A \right\vert
  X\right\}\right\vert.
\end{align*}
To prove the theorem, we bound $\Delta_1,\Delta_2,\Delta_3,\Delta_4$ separately.

To bound $\Delta_1$, we use Theorem \ref{thm:QF_napprox} with $K = 2$,
$\x \mapsto f\{-\J(\bth_0)^{-1}\x\}$ in place of $f$, 
$\bz = (\sqrt{p}\bb^{\T}/\tau_0,\be^{\T}/\s_0)^{\T} \in \R^{n+p}$, and
\begin{align*}
Q_1 & = \frac{1}{2\s_0^4\sqrt{n}}\left[\begin{array}{c}
                                         \frac{\tau_0}{\sqrt{p}}X^{\T}
                                         \\ \s_0I \end{array}\right]\left(\frac{\eta_0^2}{p}XX^{\T}
      + I\right)^{-1}\left[\begin{array}{cc}
                             \frac{\tau_0}{\sqrt{p}}X &
                                                        \s_0I\end{array}\right],
  \\ 
Q_2 & = \frac{1}{2\s_0^2\sqrt{n}}\left[\begin{array}{c}
                                         \frac{\tau_0}{\sqrt{p}}X^{\T}
                                         \\
                                         \s_0I \end{array}\right]\left(\frac{1}{p}XX^{\T}\right)\left(\frac{\eta_0^2}{p}XX^{\T}
  + I\right)^{-2}\left[\begin{array}{cc}
                             \frac{\tau_0}{\sqrt{p}}X &
                                                        \s_0I\end{array}\right].
\end{align*}
Since 
\[
\Vert Q_1 \Vert ,  \ \Vert Q_2 \Vert \leq \frac{(\s_0^2 + 1)(\eta_0^2+1)(\l_1+1)^2}{2\s_0^2\sqrt{n}}, 
\]
Theorem \ref{thm:QF_napprox} implies that there is a constant $0 < C_0
< \infty$
such that
\begin{align}\nonumber
\Delta_1 & \leq \frac{C_0(\g+1)^8(\s_0^2 +
           1)^3(\eta_0^2+1)^3(\l_1+1)^6 }{\s_0^6} \cdot \frac{p+n}{n^{3/2}}\\ \label{D1bd0} &
                                                                    \qquad \cdot \left\{\Vert
  \J(\bth_0)\Vert^{-3} + 1\right\}(\vert f\vert_2 + 1)(\vert f\vert_3
  + 1).
\end{align}
Next, we bound $\Vert
  \J(\bth_0)\Vert^{-1}$.  First  observe that 
\[
\J(\bth) = \E\{J(\bth)\vert
X\} = \left[\begin{array}{cc} \frac{1}{2\s^4} -
                      \frac{\s_0^2}{\s^6n}\sum_{i = 1}^n
                      \frac{\eta_0^2\l_i + 1}{\eta^2\l_i + 1} &
                                                             -\frac{\s_0^2}{2\s^4n}\sum_{i
                                                             = 1}^n
                                                             \frac{\l_i(\eta_0^2\l_i+1)}{(\eta^2\l_i
                                                             + 1)^2}
                      \\ -\frac{\s_0^2}{2\s^4n}\sum_{i
                                                             = 1}^n
                                                             \frac{\l_i(\eta_0^2\l_i+1)}{(\eta^2\l_i
                                                             + 1)^2} &
                                                                      \frac{1}{2n}\sum_{i
                                                                      =
                                                                      1}^n
                                                                      \frac{\l_i^2}{(\eta^2\l_i
                                                                      +
                                                                      1)^2}
                                                                      -
                                                                      \frac{\s_0^2}{\s^2
                                                                      n}\sum_{i
                                                                      =
                                                                      1}^n \frac{\l_i^2(\eta_0^2\l_i+1)}{(\eta^2\l_i+1)^3}
\end{array}\right].
\]  
It follows that
\begin{align*}
\det\{\J(\bth_0)\} & = \frac{1}{4\s_0^4}\left\{\frac{1}{n}\sum_{i =
                      1}^n \frac{\l_i^2}{(\eta_0^2\l_i + 1)^2} -
                      \left(\frac{1}{n}\sum_{i = 1}^n
                      \frac{\l_i}{\eta_0^2\l_i+1}\right)^2\right\} \\
& = \frac{1}{8\s_0^4n^2}\sum_{i,j=1}^n
\frac{ (\l_i - \l_j)^2}{(\eta_0^2\l_i+1)^2(\eta_0^2\l_j+1)^2} \\
& \geq \frac{\vfrak(\L)}{4\s_0^4(\eta_0^2 + 1)^4(\l_1+ 1)^4}
\end{align*}
and, since each entry in $\J(\bth_0)$ is bounded in absolute value by $(\s_0^2 + 1)^2(\l_1+1)^2/(2\s_0^4)$,
we conclude that
\begin{equation}\label{Jinvbd}
\Vert \J(\bth_0)\Vert^{-1} \leq \frac{4}{\vfrak(\L)}(\s_0^2+1)^2(\eta_0^2+1)^4(\l_1+1)^6.
\end{equation}
Combining \eqref{D1bd0}--\eqref{Jinvbd}, there is a $0 < C_1 < \infty$ such that
\begin{equation}\label{D1bd}
\Delta_1 \leq C_1 \frac{(\g+1)^8(\s_0^2 + 1)^{9}(\eta_0^2 + 1)^{15}(\l_1+1)^{24}(\vert f\vert_2 + 1)(\vert f\vert_3
  + 1)}{\vfrak(\L)^3\s_0^6}\cdot \frac{p+n}{n^{3/2}}.
\end{equation}

Bounding $\Delta_2$ is straightforward.  Since $\vert f(\w) \vert \leq \vert
f \vert_0$, it follows that 
\begin{equation}\label{D2bd}
\Delta_2 \leq 2\vert f \vert_0 \P(A^c\vert X).
\end{equation}
Now we move on to $\Delta_3$.  In order to obtain the desired
bound, we need to do a little bit of preliminary work.  We begin by bounding $\E\left\{\left.\Vert
  J(\bth_0) - \J(\bth_0)\Vert^2\right\vert X\right\}$.  Let $J_{kl}(\bth)$ denote the $kl$-th
element of $J(\bth)$ and observe that 
\begin{align*}
J(\bth) & = \left[\begin{array}{cc} J_{11}(\bth) & J_{12}(\bth) \\
                    J_{21}(\bth) & J_{22}(\bth) \end{array}\right] \\
& =
                                   \left[\begin{array}{cc}
                                           \frac{1}{2\s^4} -
                                           \frac{1}{\s^6n}\sum_{i =
                                           1}^n
                                           \frac{\widecheck{y}_i^2}{\eta^2\l_i
                                           + 1} &
                                                                    -\frac{1}{2\s^4n}\sum_{i
                                                                    =
                                                                    1}^n
  \frac{\l_i\widecheck{y}_i^2 }{(\eta^2\l_i + 1)^2 }\\
                                           -\frac{1}{2\s^4n}\sum_{i =
                                           1}^n
                                           \frac{\l_i\widecheck{y}_i^2}{(\eta^2\l_i
                                           + 1)^2} &
                                                     \frac{1}{2n}\sum_{i
                                                     = 1}^n
                                                     \frac{\l_i^2}{(\eta^2
                                                     \l_i + 1)^2} -
                                                     \frac{1}{\s^2n}\sum_{i
                                                     = 1}^n
                                                     \frac{\l_i^2\widecheck{y}_i^2}{(\eta^2\l_i
                                                     + 1)^3}\end{array}\right],
\end{align*}
where $\widecheck{\y} = (\widecheck{y}_1,\ldots,\widecheck{y}_n)^{\T} = U^{\T}\y$.
Since the operator norm is bounded by the Hilbert-Schmidt norm,
\begin{equation}\label{vbound}
\E\left\{\left.\Vert J(\bth_0)- \J(\bth_0) \Vert^2\right\vert X\right\}
\leq \E\left\{\left.\Vert J(\bth_0)- \J(\bth_0) \Vert_{\HS}^2\right\vert
  X\right\} = \sum_{k,l=1}^2 \Var\left\{\left. J_{kl}(\bth_0)\right\vert
    X\right\}. 
\end{equation}
The variances on the right-hand side in \eqref{vbound} can be bounded
using Lemma \ref{lemma:vqf} from the Supplementary Material, since each term is the
variance of a quadratic form.  Indeed, 
\[
\Var\{J_{kl}(\bth_0)\vert X\} = \Var(\bz^{\T}Q_{kl}\bz\vert X),
\]
where $\bz = (\zeta_1,\ldots,\zeta_{n+p})^{\T} = (p^{1/2}\bb^{\T}/\tau_0,
\be^{\T}/\s_0)^{\T} \in \R^{n+p}$, $\tau_0^2 = \eta_0^2\s_0^2$, and
\begin{align*}
Q_{11} & = \frac{1}{\s_0^6n}\left[\begin{array}{c} (\tau_0/p^{1/2}) X^{\T}\\ \s_0
                  I\end{array}\right]
  \left(\frac{\eta_0^2}{p}XX^{\T} + I\right)^{-1}\left[\begin{array}{cc} (\tau_0/p^{1/2}) X & \s_0 I\end{array}\right],   \\ 
Q_{12} & = Q_{21} = \frac{1}{2\s_0^4n}\left[\begin{array}{c} (\tau_0/p^{1/2}) X^{\T}\\ \s_0 I\end{array}\right]\left(\frac{\eta_0^2}{p}XX^{\T}\right)\left(\frac{\eta_0^2}{p}XX^{\T}
  + I\right)^{-2}\left[\begin{array}{cc} (\tau_0/p^{1/2}) X & \s_0 I\end{array}\right],\\
Q_{22} & = \frac{1}{\s_0^2n}\left[\begin{array}{c} (\tau_0/p^{1/2}) X^{\T}\\ \s_0 I\end{array}\right]\left(\frac{\eta_0^2}{p}XX^{\T}\right)^2\left(\frac{\eta_0^2}{p}XX^{\T}
  + I\right)^{-3}\left[\begin{array}{cc} (\tau_0/p^{1/2}) X & \s_0 I\end{array}\right].
\end{align*}
By \eqref{subgbd}, $\E(\zeta_j^4) \leq
16\g^4(\eta_0^2+1)^2/(\eta_0^4\s_0^4)$.  Additionally, 
\[
\Vert Q_{11}\Vert,\Vert Q_{12}\Vert,\Vert Q_{22}\Vert \leq \frac{(\s_0^2+1)^2(\eta_0^2+1)(\l_1+1)}{\s_0^4n}.
\]
Thus, by Lemma \ref{lemma:vqf}, 
\[
\Var(\bz^{\T}Q_{kl}\bz\vert X ) \leq \frac{n+p}{n^2}\left\{\frac{16(\s_0^2+1)^6(\eta_0^2+1)^4(\g+1)^4(\l_1+1)^2}{\s_0^{12}\eta_0^4}\right\}, \ \ k,l=1,2.
\]
Combining this with \eqref{vbound}, we have 
\begin{equation}\label{deltaJ}
\E\left\{\left.\Vert J(\bth_0) - \J(\bth_0)\Vert^2\right\vert
  X\right\} \leq\frac{n+p}{n^2}\left\{\frac{64(\s_0^2+1)^6(\eta_0^2+1)^4(\g+1)^4(\l_1+1)^2}{\s_0^{12}\eta_0^4}\right\}.
\end{equation}
Thus, by \eqref{Jinvbd}, \eqref{deltaJ}, and the definition of $A$,
\begin{equation}\label{D3bd}
\Delta_3 \leq \frac{23\vert
  f\vert_1(\g+1)^2 (\s_0^2+1)^5(\eta_0^2+1)^6(\l_1 + 1)^7}{\s_0^4\eta_0^2\vfrak(\L)}\cdot\frac{\sqrt{n+p}}{n}\log(n).
\end{equation}

To  bound $\Delta_4$, we need a preliminary bound involving $\r$.  By some basic manipulations,
\begin{align*}
\E\left(\Vert\r\Vert\1_{A}\vert X\right) & \leq
                                         \frac{1}{2}\E\left[\left.\left\{
                                         \left\Vert
                                         \frac{\partial^2}{\partial\bth^2}
                                         S_1(\bth^*)\right\Vert
                                         + \left\Vert
                                         \frac{\partial^2}{\partial\bth^2}
                                         S_2(\bth^*)\right\Vert\right\}\Vert\hat{\bth}
                                         -
                                         \bth_0\Vert^2\1_{A}\right\vert
                                         X\right] \\
& \leq \frac{1}{\sqrt{2}}\left[\E\left\{\left. \left\Vert
                                         \frac{\partial^2}{\partial\bth^2}
                                         S_1(\bth^*)\right\Vert^2\1_{A}\right\vert
  X\right\} + \E\left\{\left. \left\Vert
                                         \frac{\partial^2}{\partial\bth^2}
                                         S_2(\bth^*)\right\Vert^2\1_{A}\right\vert
  X\right\}\right]^{1/2} \\
& \qquad \cdot \E\left(\left.\Vert \hat{\bth} -
  \bth_0\Vert^4\1_{A}\right\vert X\right)^{1/2}.
\end{align*}
Now we find the derivatives
\begin{align*}
\frac{\partial^2}{\partial \bth^2}S_1(\bth) & =
                                              \left[\begin{array}{cc}\frac{3}{\s^8n}\sum_{i
                                                      = 1}^n
                                                      \frac{\widecheck{y}_i^2}{\eta^2\l_i+1}-\frac{1}{\s^6}
                                                      &
                                                        \frac{1}{\s^6n}\sum_{i
                                                        = 1}^n
                                                        \frac{\l_i\widecheck{y}_i^2}{(\eta^2\l_i+1)^2}
                                                      \\
 \frac{1}{\s^6n}\sum_{i = 1}^n
  \frac{\l_i\widecheck{y}_i^2}{(\eta^2\l_i+1)^2} & \frac{1}{\s^4n}\sum_{i=
                                               1}^n
                                               \frac{\l_i^2\widecheck{y}_i^2}{(\eta^2\l_i
                                               + 1)^3}
                                                      \end{array}\right],\\
\frac{\partial^2}{\partial \bth^2}S_2(\bth) & =
                                              \left[\begin{array}{cc}
                                                      \frac{1}{\s^6n}\sum_{i
                                                      = 1}^n
                                                      \frac{\l_i\widecheck{y}_i^2}{(\eta^2\l_i+1)^2}
                                                      &
                                                        \frac{1}{\s^4n}\sum_{i=1}^n
                                                        \frac{\l_i^2\widecheck{y}_i^2}{(\eta^2\l_i+1)^3}
                                                      \\
                                                      \frac{1}{\s^4n}\sum_{i
                                                      =
                                                      1}^n\frac{\l_i^2\widecheck{y}_i^2}{(\eta^2\l_i+1)^3}
                                                      &
                                                        \frac{3}{\s^2n}\sum_{i
                                                        = 1}^n
                                                        \frac{\l_i^3\widecheck{y}_i^2}{(\eta^2\l_i+1)^4}
                                                        -\frac{1}{n}\sum_{i=1}^n \frac{\l_i^3}{(\eta^2\l_i+1)^3} \end{array}\right]
\end{align*}
and observe that on $A$,
\[
\left\Vert
                                         \frac{\partial^2}{\partial\bth^2}
                                         S_1(\bth^*)\right\Vert^2,
  \ \left\Vert
                                         \frac{\partial^2}{\partial\bth^2}
                                         S_2(\bth^*)\right\Vert^2
                                       \leq \frac{5888(\s_0^2+1)^8(\l_1+1)^6}{\s_0^{16}}\left(1+
                                         \frac{\Vert\y\Vert^4}{n^2}\right).
\]
Thus, 
\begin{align}\nonumber
\E\left(\Vert\r\Vert\1_{A}\vert X\right) & \leq \frac{77(\s_0^2 +
                                                1)^4(\l_1+1)^3}{\s_0^8}\left\{1
                                                +
     \frac{1}{n^2}                                           \E(\Vert\y\Vert^4\vert X
                                               )\right\}^{1/2}\E\left(\left.\Vert
                                                \hat{\bth} -
                                                \bth_0\Vert^4\1_{A}\right\vert
                                                X\right)^{1/2} \\\nonumber
& \leq \frac{77(\s_0^2 +
                                                1)^4(\l_1+1)^3}{\s_0^8}\left[1
                                                +
     \frac{4}{n^2}  \left\{\l_1^2\E(\Vert p^{1/2}\bb\Vert^4) +
  \E(\Vert\be\Vert^4)  \right\}
  \right]^{1/2}\\ \nonumber
& \qquad \cdot \E\left(\left.\Vert
                                                \hat{\bth} -
                                                \bth_0\Vert^4\1_{A}\right\vert
                                                X\right)^{1/2}\\ \nonumber
& \leq
  \frac{621(\g+1)^2(\s_0^2+1)^5(\eta_0^2+1)(\l_1+1)^4}{\s_0^8}\cdot
  \frac{p+n}{n} \\ \nonumber
& \qquad \cdot \E\left(\left.\Vert
                                                \hat{\bth} -
                                                \bth_0\Vert^4\1_{A}\right\vert
                                                X\right)^{1/2} \\ \label{rbd}
& \leq \frac{156(\g+1)^2(\s_0^2+1)^5(\eta_0^2+1)(\l_1+1)^4}{\s_0^4}\cdot
  \frac{p+n}{n^2} \log(n)^2.
\end{align}
Combining this bound with the definition of $\Delta_4$ yields
\begin{equation}\label{D4bd}
\Delta_4 \leq \frac{883\vert f\vert_1(\g+1)^2(\s_0^2+1)^7(\eta_0^2+1)^5(\l_1+1)^{10}}{\s_0^4 \vfrak(\L)}\cdot
  \frac{p+n}{n^{3/2}} \log(n)^2.
\end{equation}
Finally, the theorem follows by combining \eqref{normal_decomp}, \eqref{D1bd}--\eqref{D2bd},
\eqref{D3bd} and \eqref{D4bd}.

\bibliography{QF_ref}

\end{appendices}

\newpage
\vspace*{0.5in}

\begin{center}
{\Large \textbf{Supplementary material: \\ Flexible results for
    quadratic forms with \\ applications to
  variance components estimation}}
\end{center}

\vspace*{0.5in}

\setcounter{section}{0}
    \renewcommand{\thesection}{S\arabic{section}}

\setcounter{equation}{0}
    \renewcommand{\theequation}{S\arabic{equation}}

\setcounter{lemma}{0}
    \renewcommand{\thelemma}{S\arabic{lemma}}

\section{Proof of Proposition
  \ref{prop:RE_dconc}} \label{supp:RE_dconc}  To prove Proposition \ref{prop:RE_dconc}, we begin by retracing the steps
of the proof of Proposition \ref{prop:RE_conc}.  Following the proof
in of Proposition \ref{prop:RE_conc} in Appendix \ref{app:RE_conc}, we have
\begin{align*}
\left\{\Vert \tilde{\bth} - \bth_0 \Vert > r \right\} & \subseteq
                                                        \left\{\sup_{0
                                                        \leq \eta^2 <
                                                        \infty} \vert
                                                        \tilde{\s}_*^2(\eta^2)
                                                        -
                                                        \s_0^2(\eta^2)\vert
                                                        > \frac{r}{2\sqrt{2}}
                                                        \right\}  \\
  &
                                                                       \qquad 
                                                                       \cup  \left\{\sup_{0
                                                        \leq \eta^2 <
                                                        \infty} \vert
                                                        \tilde{H}_*(\eta^2)
                                                        -
                                                        H_0(\eta^2)\vert
                                                        >
                                                        \frac{\s_0^2r \vfrak(\L)}{32
              \sqrt{2}(\s_0^2+1)(\l_1+1)^5(\eta_0^2+1)^4}
                                                        \right\}
  \\  &
                                                                      \qquad \cup  \left\{\sup_{0
                                                        \leq \eta^2 <
                                                        \infty} \vert
                                                        \tilde{\ell}_*(\eta^2)
                                                        -
                                                        \ell_0(\eta^2)\vert
                                                        >
                                                      \frac{\eta_0^4
                                                        \chi(\eta_0^2,\L)\vfrak(\L)}{16
                                                       (\s_0^2 + 1)^2(\l_1+1)^2(\eta_0^2+1)^2}
                                                        \right\},
\end{align*}
where we have adopted the notation from Appendix \ref{app:RE_conc}, except
that a tilde indicates all of the $\y$'s in the corresponding quantity
are replaced by $\tilde{\y} = X\tilde{\bb} + \be$.  We further decompose the event $\{\Vert
\tilde{\bth} - \bth_0\Vert > r\}$ and obtain
\begin{equation}\label{incltilde}
\left\{\Vert \tilde{\bth} - \bth_0 \Vert > r \right\} \subseteq
E \cup (E_1 \cap E_{\circ}) \cup (E_2 \cap E_{\circ}) \cup (E_3 \cap
E_{\circ}\cap E_{\star})  \cup
E_{\circ}^c \cup E_{\star}^c
\end{equation}
where 
\begin{align*}
E & = \left\{\sup_{0
                                                        \leq \eta^2 <
                                                        \infty} \vert
                                                        \s_*^2(\eta^2)
                                                        -
                                                        \s_0^2(\eta^2)\vert
                                                        > \frac{r}{4\sqrt{2}}
                                                        \right\}  \\
  &
                                                                       \qquad 
                                                                       \cup  \left\{\sup_{0
                                                        \leq \eta^2 <
                                                        \infty} \vert
                                                        H_*(\eta^2)
                                                        -
                                                        H_0(\eta^2)\vert
                                                        >
                                                        \frac{\s_0^2r \vfrak(\L)}{64
              \sqrt{2}(\s_0^2+1)(\l_1+1)^5(\eta_0^2+1)^4}
                                                        \right\}
  \\  &
                                                                      \qquad \cup  \left\{\sup_{0
                                                        \leq \eta^2 <
                                                        \infty} \vert
                                                        \ell_*(\eta^2)
                                                        -
                                                        \ell_0(\eta^2)\vert
                                                        >
                                                      \frac{\eta_0^4
                                                        \chi(\eta_0^2,\L)\vfrak(\L)}{32
                                                       (\s_0^2 + 1)^2(\l_1+1)^2(\eta_0^2+1)^2}
                                                        \right\}, \\
E_1 & = \left\{\sup_{0
                                                        \leq \eta^2 <
                                                        \infty} \vert
                                                        \tilde{\s}_*^2(\eta^2)
                                                        -
                                                        \s_*^2(\eta^2)\vert
                                                        > \frac{r}{4\sqrt{2}}
                                                        \right\},  \\
  E_2&
                                                                       =
                                                                       \left\{\sup_{0
                                                        \leq \eta^2 <
                                                        \infty} \vert
                                                        \tilde{H}_*(\eta^2)
                                                        -
                                                        H_*(\eta^2)\vert
                                                        >
                                                        \frac{\s_0^2r \vfrak(\L)}{64
              \sqrt{2}(\s_0^2+1)(\l_1+1)^5(\eta_0^2+1)^4}
                                                        \right\},
  \\ E_3 &
                                                                     = \left\{\sup_{0
                                                        \leq \eta^2 <
                                                        \infty} \vert
                                                        \tilde{\ell}_*(\eta^2)
                                                        -
                                                        \ell_*(\eta^2)\vert
                                                        >
                                                      \frac{\eta_0^4
                                                        \chi(\eta_0^2,\L)\vfrak(\L)}{32
                                                       (\s_0^2 + 1)^2(\l_1+1)^2(\eta_0^2+1)^2}
                                                        \right\}, \\
E_{\circ} & = \left\{\frac{\s_0^2}{4} \leq \frac{1}{n}\Vert \y \Vert^2
            \leq 4\s_0^2(\eta_0^2\l_1 + 1)\right\}, \\
E_{\star} & = \left\{\sup_{0 \leq \eta^2 < \infty} \vert
            \s_*^2(\eta^2) - \tilde{\s}_*^2(\eta^2)\vert \leq \frac{\s_0^2}{8}\right\}.
\end{align*}
To prove the proposition, we bound the probability of the various events on the right-hand side
of \eqref{incltilde}.

First, from the proof of Proposition \ref{prop:RE_conc}, it follows that there
is a constant $0 < C < \infty$ such that
\begin{align}\label{Ebd1}
\P(E\vert X) & \leq C\exp\left[-\frac{n}{C}\cdot
               \frac{\kappa(\s_0^2,\eta_0^2,\L)}{\g^2(\g^2+1)}\cdot
               \frac{\vfrak(\L)^2}{\{\vfrak(\L)+1\}^2}\cdot
               \frac{r^2}{(r+1)^2}\right], \mbox{ if }  n_0=n, \\ \label{Ebd2}
\P(E\vert X) & \leq C\exp\left[-\frac{n}{C}\cdot
               \frac{\kappa(\s_0^2,\eta_0^2,\L)}{\g^2(\g^2+1)}\cdot
               \left(1 - \frac{n_0}{n}\right)^4\left(\frac{n_0}{n}\right)^2\cdot
               \frac{r^2}{(r+1)^2}\right], \mbox{ if }  n_0<n.
\end{align}
To bound $\P(E_1\cap E_{\circ}\vert X)$, note the inequalities
\begin{align*}
\vert\tilde{\s}_*^2(\eta^2) - \s_*^2(\eta^2)\vert & = \left\vert
                                                    \frac{1}{n}\tilde{\y}^{\T}\left(\frac{\eta^2}{p}XX^{\T}
                                                    +
                                                    I\right)^{-1}\tilde{\y}
                                                    - \frac{1}{n}\y^{\T}\left(\frac{\eta^2}{p}XX^{\T}
                                                    +
                                                    I\right)^{-1}\y\right\vert
  \\
& \leq  \left\vert \frac{1}{n}(\bb - \tilde{\bb})^{\T}X^{\T}\left(\frac{\eta^2}{p}XX^{\T}
                                                    +
                                                    I\right)^{-1}X(\bb
  - \tilde{\bb})\right\vert  \\
& \qquad +\left\vert \frac{2}{n}(\bb - \tilde{\bb})^{\T}X^{\T}\left(\frac{\eta^2}{p}XX^{\T}
                                                    +
                                                    I\right)^{-1}\y
  \right\vert \\ 
& \leq \frac{\l_1 p}{n}\Vert \bb - \tilde{\bb}\Vert^2 +
  \frac{2 \l_1^{1/2} p^{1/2}}{n}\Vert \y\Vert \Vert \bb - \tilde{\bb}\Vert.
\end{align*}
Thus, on $E_{\circ}$, 
\[
\vert\tilde{\s}_*^2(\eta^2) - \s_*^2(\eta^2)\vert \leq 4\left(\frac{p
    + n}{n}\right)(\l_1+1)(\s_0^2 +
1)^{1/2}(\eta_0^2 + 1)^{1/2}\Vert \bb - \tilde{\bb}\Vert\left(1 + \Vert \bb - \tilde{\bb}\Vert\right)
\]
and it follows that
\begin{align}\nonumber
E_1 \cap E_{\circ}  
& \subseteq \left\{\Vert \bb - \tilde{\bb} \Vert \left(1  + \Vert \bb
  - \tilde{\bb}\Vert\right) >
  \frac{r}{16\sqrt{2}(\l_1+1)(\s_0^2+1)^{1/2}(\eta_0^2+1)^{1/2}}\left(\frac{n}{n+p}\right)\right\}
  \\ \label{E1incl1}
& \subseteq \left\{\Vert \bb - \tilde{\bb}\Vert^2 >
  \frac{1}{1024(\l_1+1)^2(\s_0^2+1)(\eta_0^2+1)}\left(\frac{n}{n+p}\right)^2\frac{r^2}{1 + r}\right\}.
\end{align}
Hence, there is a constant $0 < C_1 < \infty$ such that
\begin{equation} \label{E1bd1}
 \P(E_1 \cap E_{\circ} \vert X) \leq \P \left\{\left.\Vert \bb -
   \tilde{\bb}\Vert >
   \frac{1}{C_1}\cdot\kappa(\s_0^2,\eta_0^2,\L)\cdot\frac{n}{p+n}\cdot\frac{r}{1
     + r^{1/2}}\right\vert X\right\}
\end{equation}

Next, to bound $\P(E_2 \cap E_{\circ}\vert X)$, 
\begin{align*}
\vert \tilde{H}_*(\eta^2) & - H_*(\eta^2)\vert \\ 
& \leq \left\vert
                                               \frac{1}{n}(\tilde{\y}
  -\y)^{\T}\left(\frac{1}{p}XX^{\T}\right)
                                               \left(\frac{\eta^2}{p}XX^{\T}
                                               +
  I\right)^{-2}(\tilde{\y} - \y) \right\vert \\
& \qquad + \left\vert
                                               \frac{2}{n}(\tilde{\y}
  -\y)^{\T}\left(\frac{1}{p}XX^{\T}\right)
                                               \left(\frac{\eta^2}{p}XX^{\T}
                                               +
  I\right)^{-2}\y \right\vert + \l_1\vert \s_*^2(\eta^2) -
  \tilde{\s}^2_*(\eta^2)\vert \\
& \leq \frac{\l_1^2 p}{n}\Vert \bb - \tilde{\bb}\Vert^2 + \frac{2 \l_1^{3/2} p^{1/2}}{n}
  \Vert \bb - \tilde{\bb} \Vert\Vert\y\Vert +  \l_1\vert \s_*^2(\eta^2) -
  \tilde{\s}^2_*(\eta^2)\vert \\
& \leq \frac{2 \l_1^2 p }{n}\Vert \bb - \tilde{\bb}\Vert^2 +
  \frac{4 \l_1^{3/2} p^{1/2}}{n}\Vert\y\Vert\Vert \bb -
  \tilde{\bb}\Vert.  
\end{align*}
Hence, on $E_{\circ}$, 
\[
\vert\tilde{H}_*(\eta^2) - H_*(\eta^2)\vert  \leq 4\left(\frac{p
    + n}{n}\right)(\l_1+1)^2(\s_0^2 +
1)^{1/2}(\eta_0^2 + 1)^{1/2}\Vert \bb - \tilde{\bb}\Vert\left(1 + \Vert \bb - \tilde{\bb}\Vert\right)
\]
and
\begin{align*}
\bigg\{\sup_{0
                                                        \leq \eta^2 <
                                                        \infty} &  \vert
                                                        \tilde{H}_*(\eta^2)
                                                        -
                                                        H_*(\eta^2)\vert
                                                        >
                                                        \frac{\s_0^2r \vfrak(\L)}{64
              \sqrt{2}(\s_0^2+1)(\l_1+1)^5(\eta_0^2+1)^4}
                                                        \bigg\} \cap
                                                                  E_{\circ}\\ 
& \subseteq \left\{\Vert \bb - \tilde{\bb}\Vert\left(1 + \Vert \bb -
  \tilde{\bb}\Vert\right) > \frac{\s_0^2\vfrak(\L)
  r}{256\sqrt{2}(\s_0^2+1)^{3/2}(\eta_0^2+1)^{9/2}(\l_1+1)^7}\left(\frac{n}{p+n}\right)\right\}
  \\ \label{Htildebd}
& \subseteq \left\{\Vert \bb - \tilde{\bb}\Vert^2 >
  \frac{\s_0^4}{262144(\s_0^2+1)^3(\eta_0^2+1)^9(\l_1+1)^{14}}\left(\frac{n}{p+n}\right)^2\frac{\vfrak(\L)^2}{1+\vfrak(\L)}
  \cdot \frac{r^2}{1+r}\right\}.
\end{align*}
Consequently, there is a constant $0 < C_2 < \infty$ such that
\begin{equation}\label{E2bd1}
\P(E_2 \cap E_{\circ}\vert X) \leq \P\left\{\Vert \tilde{\bb} -
  \bb\Vert >
  \frac{1}{C_2}\cdot \kappa(\s_0^2,\eta_0^2,\L)\cdot \frac{n}{n+p} \cdot \frac{\vfrak(\L)}{1
  + \vfrak(\L)^{1/2}}\cdot\frac{r}{1+r^{1/2}}\right\}.
\end{equation}

Now we bound $\P(E_3 \cap E_{\circ} \cap E_{\star}\vert X)$.  Note
that on the event $E_{\circ}$, 
\[
\s_*^2(\eta^2) = \frac{1}{n}\y^{\T}\left(\frac{\eta^2}{p}XX^{\T} +
  I\right)^{-1}\y \geq \frac{1}{n(\eta^2\l_1 + 1)}\Vert\y\Vert^2 \geq
\frac{\s_0^2}{4(\eta^2\l_1+1)}.
\]
It follows that if we are on the event $E_{\circ} \cap E_{\star}$,
then 
\begin{align*}
\vert \tilde{\ell}_*(\eta^2) - \ell_*(\eta^2)\vert & \leq
                                                     \frac{1}{2}\left\vert
                                                     \log\left\{
                                                     \frac{\tilde{\s}_*^2(\eta^2)}{\s_*^2(\eta^2)}\right\}\right\vert
  \\
& \leq \frac{1}{2}\left\vert \frac{\s_*^2(\eta^2) -
  \tilde{\s}_*^2(\eta^2)}{\s_*^2(\eta)}\right\vert + \frac{1}{2}\left\vert \frac{\s_*^2(\eta^2) -
  \tilde{\s}_*^2(\eta^2)}{\tilde{\s}_*^2(\eta)}\right\vert \\
& \leq \frac{6}{\s_0^2}(\eta^2 \l_1+1)\vert \s_*^2(\eta^2) -
  \tilde{\s}_*^2(\eta^2)\vert \\
& \leq \frac{6p}{\s_0^2n}\Vert \bb - \tilde{\bb}\Vert^2
  + \frac{12 \l_1^{1/2} p^{1/2}}{\s_0^2\l_{n_0}^{1/2}n}\Vert \y \Vert\Vert
  \bb - \tilde{\bb}\Vert \\
& \leq \frac{6p}{\s_0^2n}\Vert \bb - \tilde{\bb}\Vert^2
  + \frac{48\l_1^{1/2}p^{1/2}}{\l_{n_0}^{1/2}n}(\eta_0^2\l_1+1)\Vert
  \bb - \tilde{\bb}\Vert \\
& \leq  \frac{48(\s_0^2+1) (\eta_0^2+1)(\l_1+1)^{3/2}(\l_{n_0}^{-1}+1)^{1/2}}{\s_0^2}\cdot\frac{p}{n}\Vert
  \bb - \tilde{\bb}\Vert\left(1 + \Vert
  \bb - \tilde{\bb}\Vert\right).
\end{align*}
Thus, 
\begin{align*}
E_3 \cap E_{\circ} \cap E_{\star} 
& \subseteq \left\{\Vert
  \bb - \tilde{\bb}\Vert\left(1 + \Vert
  \bb - \tilde{\bb}\Vert\right) >
  \frac{\eta_0^4\s_0^2\chi(\eta_0^2,\L)\vfrak(\L)n/(p+n)}{1536(\s_0^2+1)^3(\l_1+1)^{7/2}(\eta_0^2+1)^3}\right\}
  \\
& \subset \left\{\Vert \bb - \tilde{\bb} \Vert >
  \frac{\eta_0^4\s_0^2\chi(\eta_0^2,\L)n/(p+n)}{1536(\s_0^2+1)^3(\l_1+1)^{7/2}(\eta_0^2+1)^3}\cdot\frac{\vfrak(\L)}{1
  + \vfrak(\L)^{1/2}}\right\} 
\end{align*}
and there exists a constant $0 < C_3 < \infty$ such that
\begin{equation}\label{E3bd1}
\P(E_3 \cap E_{\circ} \cap E_{\star}\vert X) \leq
\P\left\{\left.\Vert \tilde{\bb} - \bb\Vert >
    \frac{1}{C_3}\cdot \kappa(\s_0^2,\eta_0^2,\L) \cdot \frac{n}{p+n}\cdot
    \frac{\vfrak(\L)}{1 + \vfrak(\L)^{1/2}}\right\vert X\right\}.
\end{equation}

Finally, it remains to bound $\P(E_{\circ}^c\vert X)$ and
$\P(E_{\star}^c\vert X)$.  A bound on the former follows directly from
Theorem \ref{thm:QF_conc}, which implies that  there is a constant $0 <
C_{\circ} < \infty$ such that 
\begin{equation}\label{Ecircbd1}
\P(E_{\circ}^c\vert X) \leq
C_{\circ}\exp\left\{-\frac{n}{C_{\circ}}\cdot \frac{\kappa(\s_0^2,\eta_0^2,\L)}{\g^2(\g^2+1)}\right\}.
\end{equation}  
Bounding $\P(E_{\star}^c\vert X)$ is also easy: Just replace
$r/(4\sqrt{2})$ with $\s_0^2/8$ in the definition of $E_1$ and use
\eqref{E1incl1} to obtain
\begin{equation}\label{Estarbd1}
\P(E_{\star}^c\vert X) \leq \P\left\{\Vert \bb - \tilde{\bb} \Vert > \frac{1}{C_{\star}}\cdot
  \kappa(\s_0^2,\eta_0^2,\L)\cdot \frac{n}{p+n}\right\}.
\end{equation}
The proposition follows by combining \eqref{incltilde}--\eqref{Ebd2}
and \eqref{E1bd1}--\eqref{Estarbd1}.

\section{Supporting lemmas}\label{supp:lemmas}

\begin{lemma}\label{lemma:Tid}
Let $t_{ij}^{kl}$ be as in \eqref{tentry11}--\eqref{tentry22}, from the proof of Theorem
\ref{thm:QF_napprox}.   Additionally, let $\mu_i^{(m)} = \E(\zeta_i^m)$. Then
\begin{align}\nonumber
t_{11}^{kl} & =\frac{4}{d} \sum_{m =1}^d \sum_{1 \leq i \neq j \leq d}
  \zeta_i\zeta_j(1 + \zeta_m^2)q_{mi}^{(k)}q_{mj}^{(l)} + \frac{2}{d} \sum_{1 \leq i \neq j \leq d}
  \zeta_j(\mu_i^{(3)} - 3\zeta_i - \zeta_i^3)(q_{ij}^{(k)}q_{ii}^{(l)} +
  q_{ii}^{(k)}q_{ij}^{(l)}) \\ \label{Tid11}
& \quad + \frac{4}{d}
  \sum_{1 \leq i \neq j \leq d} \{(\zeta_i^2\zeta_j^2 - 1) +
  (\zeta_j^2-1)\}q_{ij}^{(k)}q_{ij}^{(l)} \\ \nonumber
& \qquad + \frac{1}{d}\sum_{i=
  1}^d\{(\zeta_i^4-\mu_i^{(4)}) - 2(\zeta_i^2 - 1)\}q_{ii}^{(k)}q_{ii}^{(l)} , \\ \label{Tid12}
t_{12}^{kl} & = \frac{2}{d}\sum_{1
  \leq i \neq j \leq d} \zeta_j(\mu_i^{(3)} - \zeta_i +
  \zeta_i^3)q_{ij}^{(k)}q_{ii}^{(l)} + \frac{1}{d}\sum_{i = 1}^d\{(\zeta_i^4 - \mu_i^{(4)}) -2(\zeta_i^2 - 1)\}q_{ii}^{(k)}q_{ii}^{(l)},\\ \label{Tid21}
t_{21}^{kl} & = \frac{2}{d}\sum_{1 \leq i \neq j \leq d}
              \zeta_j(\mu_i^{(3)} - \zeta_i + \zeta_i^3)q_{ii}^{(k)}q_{ij}^{(l)}+ \frac{1}{d} \sum_{i = 1}^d \{(\zeta_i^4 - \mu_i^{(4)}) -
  2(\zeta_i^2 - 1)\}q_{ii}^{(k)}q_{ii}^{(l)}, \\ \label{Tid22}
t_{22}^{kl} & = \frac{1}{d}\sum_{i = 1}^d \{(\zeta_i^4 - \mu_i^{(4)})
              - 2(\zeta_i^2 - 1)\} q_{ii}^{(k)}q_{ii}^{(l)}.
\end{align}
\end{lemma}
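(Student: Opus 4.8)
The identities \eqref{Tid11}--\eqref{Tid22} follow from a direct expansion of the conditional second moments appearing in \eqref{tentry11}--\eqref{tentry22}. My plan is: (i) compute each of $\E\{(w_k'-w_k)(w_l'-w_l)\mid\bz\}$, $\E\{(w_k'-w_k)(\widecheck w_l'-\widecheck w_l)\mid\bz\}$ and $\E\{(\widecheck w_k'-\widecheck w_k)(\widecheck w_l'-\widecheck w_l)\mid\bz\}$ explicitly; (ii) obtain the unconditional expectations in \eqref{tentry11}--\eqref{tentry22} by applying $\E$ to these very same expressions, using independence and the moment conditions $\E(\zeta_i)=0$, $\E(\zeta_i^2)=1$; and (iii) subtract. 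To organize the algebra, fix $\i$ and $\bz$ and write $A_k = \sum_{j=1}^d q_{\i j}^{(k)}\zeta_j$ and $B_k = q_{\i\i}^{(k)}$, so that by \eqref{d1} one has $w_k'-w_k = 2(\zeta_\i'-\zeta_\i)A_k + B_k(\zeta_\i'-\zeta_\i)^2$ and $\widecheck w_k'-\widecheck w_k = B_k\{(\zeta_\i')^2-\zeta_\i^2\}$.

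The one nontrivial ingredient is a short list of conditional moments, all obtained by binomial expansion together with $\E(\zeta_\i')=0$, $\E((\zeta_\i')^2)=1$, $\E((\zeta_\i')^3)=\mu_\i^{(3)}$, $\E((\zeta_\i')^4)=\mu_\i^{(4)}$: namely $\E\{(\zeta_\i'-\zeta_\i)^2\mid\bz,\i\}=1+\zeta_\i^2$, $\E\{(\zeta_\i'-\zeta_\i)^3\mid\bz,\i\}=\mu_\i^{(3)}-3\zeta_\i-\zeta_\i^3$, $\E\{(\zeta_\i'-\zeta_\i)^4\mid\bz,\i\}=\mu_\i^{(4)}-4\mu_\i^{(3)}\zeta_\i+6\zeta_\i^2+\zeta_\i^4$, $\E\{(\zeta_\i'-\zeta_\i)\{(\zeta_\i')^2-\zeta_\i^2\}\mid\bz,\i\}=\mu_\i^{(3)}-\zeta_\i+\zeta_\i^3$, $\E\{(\zeta_\i'-\zeta_\i)^2\{(\zeta_\i')^2-\zeta_\i^2\}\mid\bz,\i\}=\mu_\i^{(4)}-2\mu_\i^{(3)}\zeta_\i-\zeta_\i^4$, and $\E\{\{(\zeta_\i')^2-\zeta_\i^2\}^2\mid\bz,\i\}=\mu_\i^{(4)}-2\zeta_\i^2+\zeta_\i^4$. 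Substituting the representations of $w_k'-w_k$ and $\widecheck w_k'-\widecheck w_k$ in terms of $A_k,B_k$, multiplying out, and inserting these moment formulas produces the conditional expectations as explicit functions of $\i$; averaging over the uniform index then replaces every occurrence of $\i$ by $\tfrac1d\sum_{m=1}^d$.

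It remains to rewrite the resulting double sums coming from $A_kA_l$, $A_kB_l$, $B_kA_l$, $B_kB_l$ (and their $\widecheck{}$ analogues) in the stated form and to subtract the unconditional means. Each such double sum is split into its genuinely off-diagonal part (distinct running indices, each also distinct from $m$) and its purely diagonal part ($i=j=m$). The off-diagonal contributions -- terms like $\zeta_i\zeta_j(1+\zeta_m^2)q_{mi}^{(k)}q_{mj}^{(l)}$ with $i\ne j$, or $\zeta_j(\mu_i^{(3)}-3\zeta_i-\zeta_i^3)q_{ij}^{(k)}q_{ii}^{(l)}$ with $i\ne j$ -- already have zero unconditional mean by independence, so they survive the centering verbatim and supply the off-diagonal lines of \eqref{Tid11}--\eqref{Tid22}. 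The diagonal contributions collapse to a single sum $\tfrac1d\sum_{i=1}^d q_{ii}^{(k)}q_{ii}^{(l)}(\cdots)$ where $(\cdots)$ is a polynomial in $\zeta_i$ with coefficients involving $\mu_i^{(3)}$ and $\mu_i^{(4)}$; collecting terms, the $\mu_i^{(3)}\zeta_i$ contributions cancel and, after subtracting the mean, the bracket reduces to $(\zeta_i^4-\mu_i^{(4)})-2(\zeta_i^2-1)$, which is exactly the diagonal term in each identity. The main obstacle is purely bookkeeping: there is no probabilistic subtlety beyond independence and the moment conditions, but one must keep careful track of the binomial coefficients and of the several case splits (each running index against $m$ and against the others) and perform the term-by-term subtraction of the unconditional expectation consistently. (The cases $t_{12}^{kl}$, $t_{21}^{kl}$, $t_{22}^{kl}$ are markedly lighter than $t_{11}^{kl}$, since $\widecheck w_k'-\widecheck w_k$ contributes only the diagonal entry $q_{\i\i}^{(k)}$ and no factor $A_k$.)
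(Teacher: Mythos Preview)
Your approach is correct and matches the paper's: both proofs expand the conditional products $\E\{(w_k'-w_k)(w_l'-w_l)\mid\bz\}$ etc.\ using exactly the moment identities you list, then subtract the unconditional mean and regroup. The only organizational difference is that the paper does not obtain the unconditional expectation by applying $\E$ to the conditional formula; instead it invokes the exchangeable-pairs identity \eqref{exvar} (already derived in the proof of Theorem~\ref{thm:QF_napprox}) to rewrite $\E\{(\w'-\w)(\w'-\w)^{\T}\}=2V\L_K^{\T}$, which yields closed forms like $\E\{(w_k'-w_k)(w_l'-w_l)\}=\tfrac{2}{d}\{2\E(w_kw_l)-\E(w_k\widecheck w_l)\}$ and reduces the centering to computing a few covariances of quadratic forms. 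Your direct route works equally well. One caution: your stated dichotomy ``all running indices distinct from each other and from $m$'' versus ``$i=j=m$'' is too coarse---for instance, the third line of \eqref{Tid11} arises from the $i=j\neq m$ part of the $A_kA_l$ sum (after relabeling), and the diagonal bracket in the last line collects contributions from several coincidence patterns, not just $i=j=m$. The bookkeeping therefore requires a finer case split than you describe, though the principle (off-diagonal pieces are mean-zero, diagonal pieces contribute the centered polynomial) is correct.
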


\begin{proof}
Let $\w_k = (w_k,\check{w}_k)^{\T}
\in \R^2$ and
write 
\[
V = \left[\begin{array}{cccc} V_{11} & V_{12} & \cdots & V_{1K} \\
            V_{21}& V_{22} & \cdots & V_{2K} \\
\vdots & \vdots & \ddots & \vdots \\ V_{K1} & V_{K2} & \cdots& V_{KK} \end{array}\right],
\]
where
\[
V_{kl} = 
\left[\begin{array}{cc}\E(w_kw_l) & \E(w_k\check{w}_l) \\ \E(\check{w}_kw_l) &
                                                                   \E(\check{w}_k\check{w}_l) \end{array}\right]
                                                               = \E(\w_k\w_l^{\T}).  
\]
Then \eqref{exvar} implies that
\[
\E\{(\w' - \w)(\w' - \w)^{\T}\} = 2\left[\begin{array}{cccc} V_{11}\L_1^{\T} & V_{12}\L_1^{\T} & \cdots & V_{1K}\L_1^{\T} \\
            V_{21}\L_1^{\T}& V_{22}\L_1^{\T} & \cdots & V_{2K}\L_1^{\T} \\
\vdots & \vdots & \ddots & \vdots \\ V_{K1}\L_1^{\T} & V_{K2}\L_1^{\T} & \cdots& V_{KK}\L_1^{\T} \end{array}\right].
\]
Then 
\begin{align} \label{t11simp}
t_{11}^{kl} & = \E\{(w_k' - w_k)(w_l' - w_l)|\bz\} -
              \frac{2}{d}\{2\E(w_kw_l) - \E(w_k\check{w}_l)\}, \\ \label{t12simp}
t_{12}^{kl} & = \E\{(w_k' - w_k)(\check{w}_k' - \check{w}_k)|\bz\} -
              \frac{2}{d}\E(w_k\check{w}_l),  \\\label{t21simp}
t_{21}^{kl} & = \E\{(\check{w}_k' - \check{w}_k)(w_l' - w_l)|\bz\} -
              \frac{2}{d}\{2\E(\check{w}_kw_l) - \E(\check{w}_k\check{w}_l)\}, \\ \label{t22simp}
t_{22}^{kl} & = \E\{(\check{w}_k' - \check{w}_k)(\check{w}_l' - \check{w}_l)|\bz\} - \frac{2}{d}\E(\check{w}_k\check{w}_l).
\end{align}
Proving each of \eqref{Tid11}--\eqref{Tid22} is now a straightforward,
yet tedious calculation, though \eqref{Tid12}--\eqref{Tid22} are equivalent.  We begin with \eqref{Tid22} (the simplest
identity) and work our way up to \eqref{Tid11}. 

We have:
\begin{align*}
t_{22}^{kl} & = \E\{(\check{w}_k' - \check{w}_k)(\check{w}_l' - \check{w}_l)|\bz\} - \frac{2}{d}\E(\check{w}_k \check{w}_l) \\ 
& = \E\left[\{(\zeta_{\i}')^2 -
  \zeta_{\i}^2\}^2(\mathbf{e}_{\i}^{\T}Q_k\mathbf{e}_{\i})(\mathbf{e}_{\i}^{\T}Q_l\mathbf{e}_{\i})|\bz\right]
  \\ & \qquad -
\frac{2}{d}\E\left\{(\bz^{\T}Q_k^{\mathrm{D}}\bz)(\bz^{\T}Q_l^{\mathrm{D}}\bz)\right\}  + 
              \frac{2}{d}\E(\bz^{\T}Q_k^{\mathrm{D}}\bz)\E (\bz^{\T}Q_l^{\mathrm{D}}\bz) \\
& = \E\left[\{(\zeta_{\i}')^2 -
  \zeta_{\i}^2\}^2(\mathbf{e}_{\i}^{\T}Q_k\mathbf{e}_{\i})(\mathbf{e}_{\i}^{\T}Q_l\mathbf{e}_{\i})|\bz\right]
 - \frac{2}{d}\E\left[\sum_{i,j=1}^{d}
  \zeta_i^2\zeta_j^2q_{ii}^{(k)}q_{jj}^{(l)}\right]+\frac{2}{d}\tr(Q_k)\tr(Q_l) \\
& = \frac{1}{d}\sum_{i = 1}^{d}\E\left[\{(\zeta_i')^2 -
  \zeta_i^2\}^2|\bz\right]q_{ii}^{(k)}q_{ii}^{(l)} -
  \frac{2}{d}\sum_{i = 1}^d (\mu_i^{(4)}-1)q_{ii}^{(k)}q_{ii}^{(l)}  \\
& =  \frac{1}{d} \sum_{i = 1}^d \mu_i^{(4)}q_{ii}^{(k)}q_{ii}^{(l)} - \frac{2}{d}\sum_{i= 1}^{d}
  \zeta_i^2q_{ii}^{(k)}q_{ii}^{(l)} + \frac{1}{d}\sum_{i = 1}^{d}
  \zeta_i^4q_{ii}^{(k)}q_{ii}^{(l)} - 
  \frac{2}{d}\sum_{i = 1}^d (\mu_i^{(4)}-1)q_{ii}^{(k)}q_{ii}^{(l)} \\
& = \frac{1}{d}\sum_{i = 1}^d \{(\zeta_i^4 - \mu_i^{(4)}) - 2(\zeta_i^2
  - 1)\}q_{ii}^{(k)}q_{ii}^{(l)}.
\end{align*}
Thus, \eqref{Tid22}.  Next, we prove \eqref{Tid12}:
\begin{align*}
t_{12}^{kl} & = \E\{(w_k'-w_k)(\check{w}_k' -\check{w}_k)|\bz\} - \frac{2}{d}
              \E(w_k\check{w}_l) \\
& = \E\left[2(\zeta_{\i}' - \zeta_{\i})\{(\zeta_{\i}')^2 -
  \zeta_{\i}^2\}(\mathbf{e}_{\i}^{\T}Q_k\bz)(\mathbf{e}_{\i}^{\T}Q_l\mathbf{e}_{\i})|\bz\right]
  \\
& \qquad 
  + \E\left[(\zeta_{\i}' - \zeta_{\i})^2\{(\zeta_{\i}')^2 -
  \zeta_{\i}^2\}(\mathbf{e}_{\i}^{\T}Q_k\mathbf{e}_{\i})(\mathbf{e}_{\i}^{\T}Q_l\mathbf{e}_{\i})|\bz\right]
  \\
& \qquad 
  - \frac{2}{d} \E\{(\bz^{\T}Q_k\bz)(\bz^{\T}Q_l^{\mathrm{D}}\bz)\} +
  \frac{2}{d}\E(\bz^{\T}Q_k\bz)\E(\bz^{\T}Q_l^{\mathrm{D}}\bz) \\
& = \frac{2}{d}\sum_{i = 1}^d  \E\left[(\zeta_i' - \zeta_i)\{(\zeta_i')^2 -
  \zeta_i^2\}(\mathbf{e}_i^{\T}Q_k\bz)(\mathbf{e}_i^{\T}Q_l\mathbf{e}_i)|\bz\right]
  \\
& \qquad + \frac{1}{d}\sum_{i = 1}^d \E\left[(\zeta_i' - \zeta_i)^2\{(\zeta_i')^2 -
  \zeta_i^2\}(\mathbf{e}_i^{\T}Q_k\mathbf{e}_i)(\mathbf{e}_i^{\T}Q_i\mathbf{e}_i)|\bz\right]
  \\
& \qquad -\frac{2}{d}\sum_{i,j,m=1}^d
  \E(\zeta_i\zeta_j\zeta_m^2)q_{ij}^{(k)}q_{mm}^{(l)} +
  \frac{2}{d}\sum_{i,j=1}^d q_{ii}^{(k)}q_{jj}^{(l)} \\
& = \frac{2}{d}\sum_{i = 1}^d \E\left[\{(\zeta_i')^3 -
  (\zeta_i')^2\zeta_i - \zeta_i'\zeta_i^2 +
  \zeta_i^3\}(\mathbf{e}_i^{\T}Q_k\bz)(\mathbf{e}_i^{\T}Q_l\mathbf{e}_i)|\bz\right]
  \\ 
& \qquad + \frac{1}{d}\sum_{i = 1}^d \E\left[\{(\zeta_i')^4 - 2(\zeta_i')^3\zeta_i +
    2\zeta_i'\zeta_i^3 - \zeta_i^4\}(\mathbf{e}_i^{\T}Q_k\mathbf{e}_i)(\mathbf{e}_i^{\T}Q_l\mathbf{e}_i)|\bz\right]\\
  & \qquad - 
    \frac{2}{d}\sum_{i = 1}^d (\mu_i^{(4)} -
    1)q_{ii}^{(k)}q_{jj}^{(l)} \\
& = \frac{2}{d}\sum_{i,j = 1}^d \zeta_j(\mu_i^{(3)} - \zeta_i +
  \zeta_i^3)q_{ij}^{(k)}q_{ii}^{(l)} + \frac{1}{d}\sum_{i = 1}^d (\mu_i^{(4)} -
  2\mu_i^{(3)}\zeta_i - \zeta_i^4)q_{ii}^{(k)}q_{ii}^{(l)} \\& \qquad - 
    \frac{2}{d}\sum_{i = 1}^d (\mu_i^{(4)} -
    1)q_{ii}^{(k)}q_{jj}^{(l)}  \\
& = \frac{2}{d}\sum_{i = 1}^d (\zeta_i\mu_i^{(3)} - \zeta_i^2 +
  \zeta_i^4)q_{ii}^{(k)}q_{ii}^{(l)} + \frac{2}{d}\sum_{1 \leq i \neq
  j \leq d} \zeta_j(\mu_i^{(3)} - \zeta_i +
  \zeta_i^3)q_{ij}^{(k)}q_{ii}^{(l)}  \\
& \qquad + \frac{1}{d}\sum_{i = 1}^d (\mu_i^{(4)} -
  2\mu_i^{(3)}\zeta_i - \zeta_i^4)q_{ii}^{(k)}q_{ll}^{(l)} \\& \qquad - 
    \frac{2}{d}\sum_{i = 1}^d (\mu_i^{(4)} -
    1)q_{ii}^{(k)}q_{jj}^{(l)}  \\
& = \frac{2}{d}\sum_{1
  \leq i \neq j \leq d} \zeta_j(\mu_i^{(3)} - \zeta_i +
  \zeta_i^3)q_{ij}^{(k)}q_{ii}^{(l)} + \frac{1}{d}\sum_{i = 1}^d\{(\zeta_i^4 - \mu_i^{(4)}) -2(\zeta_i^2 - 1)\}q_{ii}^{(k)}q_{ii}^{(l)}.
\end{align*}
The identity \eqref{Tid21} follows immediately from \eqref{Tid12} by
symmetry.  Finally, we prove \eqref{Tid11}:
\begin{align*}
t_{11}^{kl} & = \E\{(w_k' - w_k)(w_l' - w_l)|\bz\} -
              \frac{4}{d}\E(w_kw_l) + \frac{2}{d}\E(w_k\check{w}_l) \\
& = \E\left[4(\zeta_{\i}' -
  \zeta_{\i})^2(\mathbf{e}_{\i}^{\T}Q_k\bz)(\mathbf{e}_{\i}^{\T}Q_l\bz)|\bz\right]
  + \E\left[2(\zeta_{\i}' -
  \zeta_{\i})^3(\mathbf{e}_{\i}^{\T}Q_k\bz)(\mathbf{e}_{\i}^{\T}Q_l\mathbf{e}_{\i})|\bz\right]
  \\
& \qquad + \E\left[2(\zeta_{\i}' -
  \zeta_{\i})^3(\mathbf{e}_{\i}^{\T}Q_k\mathbf{e}_{\i})(\mathbf{e}_{\i}^{\T}Q_l\bz)|\bz\right]
  + \E\left[(\zeta_{\i}' - \zeta_{\i})^4
  (\mathbf{e}_{\i}^{\T}Q_k\mathbf{e}_{\i})(\mathbf{e}_{\i}^{\T}Q_l\mathbf{e}_{\i})|\bz\right]
  \\
& \qquad - \frac{4}{d}\E\{(\bz^{\T}Q_k\bz)(\bz^{\T}Q_l\bz)\} +
  \frac{4}{d}\E(\bz^{\T}Q_k\bz)\E(\bz^{\T}Q_l\bz) \\
& \qquad + \frac{2}{d}\E\{(\bz^{\T}Q_k\bz)(\bz^{\T}Q_l^{\mathrm{D}}\bz)\} -
  \frac{2}{d}\E(\bz^{\T}Q_k\bz)\E(\bz^{\T}Q_l^{\mathrm{D}}\bz) \\
& = \frac{4}{d}\sum_{i  =1}^{d}\E\left[(\zeta_i' -
  \zeta_i)^2(\mathbf{e}_i^{\T}Q_k\bz)(\mathbf{e}_i^{\T}Q_l\bz)|\bz\right]
  + \frac{2}{d}\sum_{i =1}^{d}\E\left[(\zeta_i' -
  \zeta_i)^3(\mathbf{e}_i^{\T}Q_k\bz)(\mathbf{e}_i^{\T}Q_l\mathbf{e}_i)|\bz\right]
  \\
& \qquad + \frac{2}{d}\sum_{i = 1}^{d} \E\left[(\zeta_i' -
  \zeta_i)^3(\mathbf{e}_i^{\T}Q_k\mathbf{e}_i)(\mathbf{e}_i^{\T}Q_l\bz)|\bz\right]
  + \frac{1}{d}\sum_{i= 1}^{d} \E\left[(\zeta_i' - \zeta_i)^4
  (\mathbf{e}_i^{\T}Q_k\mathbf{e}_i)(\mathbf{e}_i^{\T}Q_l\mathbf{e}_i)|\bz\right]
  \\ & \qquad -\frac{4}{d}\sum_{i,j,m,n=1}^d
       \E(\zeta_i\zeta_j\zeta_m\zeta_n)q_{ij}^{(k)}q_{mn}^{(l)} +
       \frac{2}{d}\sum_{i,j,m=1}^d
       \E(\zeta_i\zeta_j\zeta_m^2)q_{ij}^{(k)}q_{mm}^{(l)} +
       \frac{2}{d}\sum_{i,j=1}^d q_{ii}^{(k)}q_{jj}^{(l)} \\
& = \frac{4}{d}\sum_{i  =1}^{d}\E\left[\{(\zeta_i')^2 -
  2\zeta_i'\zeta_i +
  \zeta_i^2\}(\mathbf{e}_i^{\T}Q_k\bz)(\mathbf{e}_i^{\T}Q_l\bz)|\bz\right]
  \\
& \qquad 
  + \frac{2}{d}\sum_{i =1}^{d}\E\left[\{(\zeta_i')^3 -
  3(\zeta_i')^2\zeta_i + 3\zeta_i'\zeta_i^2 -
  \zeta_i^3\}\{(\mathbf{e}_i^{\T}Q_k\bz)(\mathbf{e}_i^{\T}Q_l\mathbf{e}_i)
  + (\mathbf{e}_i^{\T}Q_k\mathbf{e}_i)(\mathbf{e}_i^{\T}Q_l\bz)\}|\bz\right]
  \\
& \qquad 
  + \frac{1}{d}\sum_{i= 1}^{d} \E\left[\{(\zeta_i')^4 -
  4(\zeta_i')^3\zeta_i + 6(\zeta_i')^2\zeta_i^2 -4\zeta_i'\zeta_i^3 + \zeta_i^4\}
  (\mathbf{e}_i^{\T}Q_k\mathbf{e}_i)(\mathbf{e}_i^{\T}Q_l\mathbf{e}_i)|\bz\right]
  \\
& \qquad - \frac{4}{d}\sum_{1 \leq m\neq n \leq d}\sum_{i,j=1}^d
  \E(\zeta_i\zeta_j\zeta_m\zeta_n)q_{ij}^{(k)}q_{mn}^{(l)} -\frac{2}{d}
  \sum_{i,j,m = 1}^d
  \E(\zeta_i\zeta_j\zeta_m^2)q_{ij}^{(k)}q_{mm}^{(l)} +
  \frac{2}{d}\sum_{i,j = 1}^d q_{ii}^{(k)}q_{jj}^{(l)} \\ 
& = \frac{4}{d}\sum_{i  =1}^{d} (1+\zeta_i^2)(\mathbf{e}_i^{\T}Q_k\bz)(\mathbf{e}_i^{\T}Q_l\bz)
  \\
& \qquad 
  + \frac{2}{d}\sum_{i =1}^{d} (\mu_i^{(3)} -
  3\zeta_i -
  \zeta_i^3)\{(\mathbf{e}_i^{\T}Q_k\bz)(\mathbf{e}_i^{\T}Q_l\mathbf{e}_i)
  + (\mathbf{e}_i^{\T}Q_k\mathbf{e}_i)(\mathbf{e}_i^{\T}Q_l\bz)\}
  \\
& \qquad 
  + \frac{1}{d}\sum_{i= 1}^{d} (\mu_i^{(4)} -
  4\mu_i^{(3)}\zeta_i + 6\zeta_i^2 + \zeta_i^4)
  (\mathbf{e}_i^{\T}Q_k\mathbf{e}_i)(\mathbf{e}_i^{\T}Q_l\mathbf{e}_i)
  \\
& \qquad - \frac{8}{d}\sum_{1 \leq i \neq j \leq d} q_{ij}^{(k)}q_{ij}^{(l)} -
  \frac{2}{d}\sum_{i = 1}^d (\mu_i^{(4)} - 1)q_{ii}^{(k)}q_{ii}^{(l)}
  \\
& = \frac{4}{d} \sum_{i,j,m=1}^d \zeta_i\zeta_j(1 +
  \zeta_m^2)q_{mi}^{(k)}q_{mj}^{(l)} + \frac{2}{d}\sum_{i,j=1}^d \zeta_j(\mu_i^{(3)} - 3\zeta_i -
  \zeta_i^3)(q_{ij}^{(k)}q_{ii}^{(l)} + q_{ii}^{(k)}q_{ij}^{(l)}) \\
& \qquad + \frac{1}{d}\sum_{i = 1}^d (\mu_i^{(4)} -
  4\mu_i^{(3)}\zeta_i + 6\zeta_i^2 +
  \zeta_i^4)q_{ii}^{(k)}q_{ii}^{(l)} - \frac{8}{d}\sum_{i,j = 1}^d q_{ij}^{(k)}q_{ij}^{(l)} -
  \frac{2}{d}\sum_{i = 1}^d (\mu_i^{(4)} - 5)q_{ii}^{(k)}q_{ii}^{(l)}
  \\
& = \frac{4}{d} \sum_{m =1}^d \sum_{1 \leq i \neq j \leq d}
  \zeta_i\zeta_j(1 + \zeta_m^2)q_{mi}^{(k)}q_{mj}^{(l)} + \frac{2}{d} \sum_{1 \leq i \neq j \leq d}
  \zeta_j(\mu_i^{(3)} - 3\zeta_i - \zeta_i^3)(q_{ij}^{(k)}q_{ii}^{(l)} +
  q_{ii}^{(k)}q_{ij}^{(l)}) \\
& \qquad + \frac{4}{d}
  \sum_{1 \leq i \neq j \leq d} \{(\zeta_i^2\zeta_j^2 - 1) + (\zeta_j^2-1)\}q_{ij}^{(k)}q_{ij}^{(l)} + \frac{1}{d}\sum_{i=
  1}^d\{(\zeta_i^4-1) - 2(\zeta_i^2 - 1)\}q_{ii}^{(k)}q_{ii}^{(l)}. 
\end{align*}
The identity \eqref{Tid11} and the lemma follow.
\end{proof}

\begin{lemma}\label{lemma:Tbd}
Assume that the conditions of Theorem
\ref{thm:QF_napprox}  and Lemma \ref{lemma:Tid} hold and let $c(\g) = 4096
(\g + 1)^8$.  Then
\begin{align}\label{Tbd11}
\E\{(t_{11}^{kl})^2\} & \leq \frac{8\{108c(\g)^2 + 763c(\g) + 930\}}{d} \Vert
Q_k\Vert^2 \Vert Q_l\Vert^2 \\ \label{Tbd12}
\E\{(t_{12}^{kl})^2\} & \leq \frac{2\{24c(\g)^2 + 69c(\g) + 1\} }{d}\Vert Q_k\Vert^2\Vert Q_l\Vert^2,\\ \label{Tbd21}
\E\{(t_{21}^{kl})^2\} & \leq \frac{2\{24c(\g)^2 + 69c(\g) + 1\} }{d}\Vert Q_k\Vert^2\Vert Q_l\Vert^2,\\ \label{Tbd22}
\E\{(t_{22}^{kl})^2\} & \leq \frac{c(\g)+4}{d}\Vert Q_k\Vert^2 \Vert Q_l\Vert^2.
\end{align}
\end{lemma}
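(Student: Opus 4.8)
The plan is to work directly from the explicit identities \eqref{Tid11}--\eqref{Tid22} of Lemma \ref{lemma:Tid}, which express each $t_{ij}^{kl}$ as a sum of at most four terms, each of the shape $\frac{c}{d}\sum_{\text{indices}} \xi\cdot(\text{product of two entries of }Q_k,Q_l)$, where $\xi$ is a \emph{centered} random variable built from the independent $\zeta_i$ (for instance $\zeta_i\zeta_j(1+\zeta_m^2)$ with $i\neq j$, or $\zeta_j(\mu_i^{(3)}-3\zeta_i-\zeta_i^3)$ with $i\neq j$, or $(\zeta_i^4-\mu_i^{(4)})-2(\zeta_i^2-1)$). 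By Minkowski's inequality it suffices to bound the $L^2$ norm of each of these terms separately and then combine; this is exactly why the constants in \eqref{Tbd11}--\eqref{Tbd22} are sums of a few contributions rather than single numbers, and why $t_{22}^{kl}$ (a single sum over one index) gives the cleanest bound.

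For a single term I would expand the square as a double sum over two independent copies of the index tuple and take expectations. Because the $\zeta_i$ are independent with mean $0$ and variance $1$, a product of centered factors over two tuples has nonvanishing expectation only when the ``free'' indices of one tuple are matched by indices of the other; this collapses the double sum to a small set of surviving index patterns. Each such pattern contributes a sum of products of matrix entries, which I would recognize as a Hilbert--Schmidt-type quantity: $\sum_{i,j}(q_{ij}^{(k)})^2=\Vert Q_k\Vert_{\HS}^2$, $\sum_{i,j}\big((Q_kQ_l)_{ij}\big)^2=\Vert Q_kQ_l\Vert_{\HS}^2$, $\sum_i (q_{ii}^{(k)})^2\le\Vert Q_k\Vert_{\HS}^2$, and so on, and bound these via $\Vert Q_kQ_l\Vert_{\HS}^2\le\Vert Q_k\Vert^2\Vert Q_l\Vert_{\HS}^2$ together with $\Vert Q_k\Vert_{\HS}^2=\tr(Q_k^2)\le\Vert Q_k\Vert\tr(Q_k)\le d\Vert Q_k\Vert^2$ (using $Q_k\succeq 0$). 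The squared prefactors $\frac{1}{d^2}$ (or $\frac{4}{d^2}$, $\frac{16}{d^2}$) against the single factor $d$ from $\Vert Q_k\Vert_{\HS}^2\le d\Vert Q_k\Vert^2$ produce the overall $\frac{1}{d}$ in the lemma. The moments of the random factors --- $\E(\zeta_i^2)=1$, together with $\mu_i^{(3)}$, $\mu_i^{(4)}$, $\E(\zeta_i^6)$, $\E(\zeta_i^8)$ --- are all controlled by the sub-Gaussian bound \eqref{subgauss}: $\E|\zeta_i|^r\le(\g\sqrt r)^r$, so e.g. $\E(\zeta_i^8)\le 4096\g^8\le c(\g)$; products of such moments are dominated by $c(\g)^2$, while the pieces in which the remaining $\zeta$-factors are trivially $\E(\zeta_i^2)=1$ contribute only $c(\g)$ or absolute constants --- this accounts for the $c(\g)^2$, $c(\g)$, and pure-constant terms in \eqref{Tbd11}--\eqref{Tbd22}.

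The main obstacle will be the bookkeeping for $\E\{(t_{11}^{kl})^2\}$: \eqref{Tid11} has four terms, and the first, $\frac{4}{d}\sum_m\sum_{i\neq j}\zeta_i\zeta_j(1+\zeta_m^2)q_{mi}^{(k)}q_{mj}^{(l)}$, is a genuine triple sum, so squaring it forces a case analysis over how $m'$ relates to $m$ and how $\{i',j'\}$ overlaps $\{i,j\}$ and $\{m,m'\}$; one must also handle the cross terms between the four constituent terms of $t_{11}^{kl}$ (many, but not all, vanish by mean-zero/parity arguments). The analogous computations for $t_{12}^{kl}$ and $t_{21}^{kl}$ involve only double sums, and $t_{22}^{kl}$ is essentially immediate, so the real effort is a careful but routine enumeration for \eqref{Tbd11}, after which \eqref{Tbd12}--\eqref{Tbd22} follow by the same method with strictly fewer cases.
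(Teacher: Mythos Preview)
Your plan is correct and matches the paper's proof: expand each identity from Lemma~\ref{lemma:Tid}, square it (the paper uses $(a_1+\cdots+a_m)^2\le m\sum a_i^2$ rather than Minkowski, which is why the leading factors are $8$, $2$, $2$, $1$), exploit independence so that only index-coincidence patterns survive, rewrite the surviving sums as traces of matrix products, and bound those by $d\Vert Q_k\Vert^2\Vert Q_l\Vert^2$.

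One ingredient the paper uses that you did not list: for several of the surviving patterns (notably in the $t_{12}^{kl}$ computation and in the $D_3$ piece of $t_{11}^{kl}$) the relevant matrix is the Hadamard product $Q_k\circ Q_l$, and the paper invokes the operator-norm inequality $\Vert Q_k\circ Q_l\Vert\le\Vert Q_k\Vert\,\Vert Q_l\Vert$ for positive semidefinite matrices \citep{horn1990analog}. Your entrywise route via $|q_{ij}^{(l)}|\le\Vert Q_l\Vert$ (which holds because $Q_l\succeq 0$) together with $\Vert Q_k\Vert_{\HS}^2\le d\Vert Q_k\Vert^2$ would handle the same terms, so there is no gap---but if you want to reproduce the \emph{exact} constants in \eqref{Tbd11}--\eqref{Tbd22} you will need to track the paper's trace identities (e.g.\ $\tr\{(M_4-I)(Q_kQ_l)\circ(Q_kQ_l)\}$, $\bm_3^\T(Q_k\circ Q_l)^2\bm_3$) rather than bound everything crudely by $\Vert Q_k\Vert_{\HS}^2$ at the first opportunity.
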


\begin{proof}
First note that \eqref{subgauss} implies
$\E(|\zeta_i|^m) \leq c(\g)$,  $m = 1,\ldots,8$.
This moment
bound for the $\zeta_i$ will be used repeatedly below.  Each bound in the lemma follows from a direct calculations.  However,
\eqref{Tbd11} is substantially more involved than the others; we save
this bound until the end.  First, we derive \eqref{Tbd22}.
We have
\begin{align*}
\E\{(t_{22}^{kl})^2\} & = \frac{1}{d^2}\sum_{i = 1}^d \E\left[\{(\zeta_i^4
                        - \mu_i^{(4)}) - 2(\zeta_i^2 -
                        1)\}^2\right](q_{ii}^{(k)}q_{ii}^{(l)})^2 \\
& \leq \frac{1}{d^2}\sum_{i = 1}^d
  \E\left\{(\zeta_i^4 -
  2\zeta_i^2)^2\right\}(q_{ii}^{(k)}q_{ii}^{(l)})^2\\
& = \frac{1}{d^2}\sum_{i = 1}^d
  (\mu_i^{(8)} - 4\mu_i^{(6)} + 4)(q_{ii}^{(k)}q_{ii}^{(l)})^2\\ 
& \leq \frac{c(\g)+4}{d}\Vert Q_k \Vert^2 \Vert Q_l\Vert^2.
\end{align*}
Next, we prove \eqref{Tbd12}.  Let $\bm_3 = (\mu_1^{(3)},\ldots,\mu_d^{(3)})^{\T} \in \R^d$ and, for
matrices $A,B$, let $A \circ B$ denote their Hadamard product.  Then
\begin{align*}
\E\{(t_{12}^{kl})^2\} & \leq \frac{8}{d^2}\sum_{1 \leq i \neq j \leq
                        d} \sum_{1 \leq m \neq n \leq d}
                        \E\left\{\zeta_j\zeta_n(\mu_i^{(3)} - \zeta_i +
                        \zeta_i^3)(\mu_m^{(3)} - \zeta_m +
                        \zeta_m^3)\right\}q_{ij}^{(k)}q_{ij}^{(l)}q_{mn}^{(k)}q_{mn}^{(l)}
  \\
& \qquad + \frac{2}{d^2}\sum_{i = 1}^d\E\left[\{(\zeta_i^4 -
  \mu_i^{(4)}) - 2(\zeta_i^2 -
  1)\}^2\right](q_{ii}^{(k)}q_{ii}^{(l)})^2 \\
& \leq \frac{8}{d^2} \sum_{1\leq i \neq j \leq d} \E\left\{\zeta_j (\mu_j^{(3)} - \zeta_j +
                        \zeta_j^3)\right\}\E\left\{\zeta_i (\mu_i^{(3)} - \zeta_i +
                        \zeta_i^3)\right\}(q_{ij}^{(k)}q_{ij}^{(l)})^2\\
& \qquad +
  \frac{8}{d^2} \sum_{1 \leq i \neq j\leq d} 
  \E\left\{\zeta_j^2(\mu_i^{(3)} - \zeta_i +
                        \zeta_i^3)^2\right\}(q_{ij}^{(k)}q_{ij}^{(l)})^2\\
& \qquad +
  \frac{8}{d^2} \sum_{1 \leq i \neq j \neq m \leq d} 
  \E\left\{\zeta_j^2(\mu_i^{(3)} - \zeta_i +
                        \zeta_i^3)(\mu_m^{(3)} - \zeta_m +
                        \zeta_m^3)\right\}q_{ij}^{(k)}q_{ij}^{(l)}q_{mj}^{(k)}q_{mj}^{(l)}\\
& \qquad + \frac{2}{d^2}\sum_{i = 1}^d
  \E\left\{(\zeta_i^4 -
  2\zeta_i^2)^2\right\}(q_{ii}^{(k)}q_{ii}^{(l)})^2\\
& =\frac{32}{d^2} \sum_{1 \leq i \neq j \neq m \leq d}
  \mu_i^{(3)}
  \mu_j^{(3)}q_{im}^{(k)}q_{mi}^{(l)}q_{jm}^{(k)}q_{mj}^{(l)}\\ & \qquad
                                                                +
  \frac{8}{d^2}\sum_{1 \leq i \neq j \leq d} \{\mu_i^{(6)}  +
                                                                \mu_i^{(4)}\mu_j^{(4)}
                                                                -
                                                                3\mu_i^{(4)}
                                                                -
                                                                \mu_j^{(4)}
                                                                +
                                                                3(\mu_i^{(3)})^2
                                                                + 2\}(q_{ij}^{(k)}q_{ij}^{(l)})^2 \\
& \qquad +  \frac{2}{d^2}\sum_{i = 1}^d
  (\mu_i^{(8)} - 4\mu_i^{(6)} + 4)(q_{ii}^{(k)}q_{ii}^{(l)})^2 \\ 
& = \frac{32}{d^2} \sum_{i,j,m=1}^d
  \mu_i^{(3)}
  \mu_j^{(3)}q_{im}^{(k)}q_{mi}^{(l)}q_{jm}^{(k)}q_{mj}^{(l)} \\
& \qquad - \frac{64}{d^2}\sum_{i,j=1}^d   \mu_i^{(3)}
  \mu_j^{(3)}q_{ii}^{(k)}q_{ii}^{(l)}q_{ji}^{(k)}q_{ij}^{(l)}
\\ & \qquad + \frac{8}{d^2}\sum_{i,j=1}^d \{\mu_i^{(6)}  +
                                                                \mu_i^{(4)}\mu_j^{(4)}
                                                                -
                                                                3\mu_i^{(4)}
                                                                -
                                                                \mu_j^{(4)}
                                                                -
                                                                (\mu_i^{(3)})^2
                                                                + 2\}(q_{ij}^{(k)}q_{ij}^{(l)})^2 \\
& \qquad +  \frac{2}{d^2}\sum_{i = 1}^d
  \{\mu_i^{(8)} - 8\mu_i^{(6)} - 4(\mu_i^{(4)})^2 + 16\mu_i^{(4)} +
  20(\mu_i^{(3)})^2 -  4\}(q_{ii}^{(k)}q_{ii}^{(l)})^2\\ 
& \leq \frac{32}{d^2} \bm_3^{\T}(Q_k\circ Q_l)^2\bm_3  -
  \frac{64}{d^2}\bm_3^{\T} Q_k^{\mathrm{D}}Q_l^{\mathrm{D}}(Q_k\circ
  Q_l)\bm_3 \\
& \qquad + \frac{8\{c(\g) + c(\g)^2 + 2\}}{d^2}\tr\{(Q_k\circ Q_l)^2\} +
  \frac{2\{17c(\g)+20c(\g)^2\}}{d} \Vert Q_k\Vert^2\Vert Q_l\Vert^2\\
& \leq \frac{32c(\g)}{d}\Vert Q_k\circ Q_l\Vert^2 + \frac{64c(\g)}{d} \Vert
  Q_k\Vert \Vert Q_l\Vert \Vert Q_k \circ Q_l\Vert \\
& \qquad + \frac{8\{c(\g) + c(\g)^2 + 2\}}{d}\Vert Q_k\circ Q_l\Vert^2 +
  \frac{2\{17c(\g)+20c(\g)^2\}}{d} \Vert Q_k\Vert^2\Vert Q_l\Vert^2\\
& \leq \frac{2\{24c(\g)^2 + 69c(\g) + 1\} }{d}\Vert Q_k\Vert^2\Vert Q_l\Vert^2,
\end{align*}
where we have used the fact that $\Vert Q_k \circ Q_l \Vert \leq \Vert
Q_k\Vert \Vert Q_l\Vert$ (Theorem 3.1 of \citep{horn1990analog}).
Thus, we have proved \eqref{Tbd12}; \eqref{Tbd21} follows immediately
by symmetry.   Finally, we bound the second moment of
$t_{11}^{kl}$.  Observe that
\begin{equation}\label{t11terms}
\E\{(t_{11}^{kl})^2\}  \leq D_1 + D_2 + D_3 + D_4,
\end{equation}
where
\begin{align*}
D_1 & = \frac{64}{d^2} \sum_{m,n=1}^d \sum_{\substack{1 \leq i \neq j
      \leq d \\ 1 \leq u \neq v \leq d}}
  \E\left\{\zeta_i\zeta_j\zeta_u\zeta_v(1 + \zeta_m^2)(1+\zeta_n^2)\right\}q_{im}^{(k)}q_{mj}^{(l)}q_{un}^{(k)}q_{nv}^{(l)},
                       \\
D_2 & =  \frac{16}{d^2}\sum_{1 \leq i \neq j \leq d}
                        \sum_{1 \leq m \neq n\leq d}
                        \E\left\{\zeta_j\zeta_n(\mu_i^{(3)} - 3\zeta_i
                        - \zeta_i^3)(\mu_m^{(3)} - 3\zeta_m
                        - \zeta_m^3) \right\}\\
& \qquad \qquad \cdot (q_{ij}^{(k)}q_{ii}^{(l)} +
  q_{ii}^{(k)}q_{ij}^{(l)}) (q_{mn}^{(k)}q_{mm}^{(l)} +
  q_{mm}^{(k)}q_{mn}^{(l)}), \\
D_3 & = \frac{64}{d^2}\sum_{1 \leq i \neq j \leq d} \sum_{1 \leq m
  \neq n \leq d} \E\left[\{(\zeta_i^2\zeta_j^2 - 1) + (\zeta_j^2 -
  1)\}\{(\zeta_m^2\zeta_n^2 - 1)+(\zeta_n^2 - 1)\}\right] q_{ij}^{(k)}q_{ij}^{(l)}q_{mn}^{(k)}q_{mn}^{(l)},
  \\
D_4 & =  \frac{4}{d^2} \sum_{i=1}^d \E\left[\{(\zeta_i^4 -
  \mu_i^{(4)}) - 2(\zeta_i^2 -
  1)\}^2\right](q_{ii}^{(k)}q_{ii}^{(l)})^2. 
\end{align*}
We bound $D_1,D_2,D_3,D_4$ separately.  First we consider $D_1$, the
most complicated term.  Define the diagonal matrix $M_k =
\diag(\mu_1^{(k)},\ldots,\mu_d^{(k)})$ and observe that
\begin{align*}
D_1 & = \frac{64}{d^2}\sum_{m,n=1}^d \sum_{1 \leq i \neq j \leq d}
      \E\left\{\zeta_i^2\zeta_j^2(1 +
      \zeta_m^2)(1+\zeta_n^2) \right\} (q_{im}^{(k)}q_{mj}^{(l)}q_{in}^{(k)}q_{nj}^{(l)}
      + q_{im}^{(k)}q_{mj}^{(l)}q_{jn}^{(k)}q_{ni}^{(l)}) \\
& = \frac{64}{d^2} \sum_{1 \leq i\neq j\neq m \neq n \leq d} \E\left\{\zeta_i^2\zeta_j^2(1 +
      \zeta_m^2)(1+\zeta_n^2) \right\} (q_{im}^{(k)}q_{mj}^{(l)}q_{in}^{(k)}q_{nj}^{(l)}
      + q_{im}^{(k)}q_{mj}^{(l)}q_{jn}^{(k)}q_{ni}^{(l)}) \\
& \qquad + \frac{64}{d^2} \sum_{1 \leq i \neq j \neq m \leq d} \E\left\{\zeta_i^2\zeta_j^2(1 +
      \zeta_i^2)(1+\zeta_m^2) \right\} (q_{ii}^{(k)}q_{ij}^{(l)}q_{im}^{(k)}q_{mj}^{(l)}
      + q_{ii}^{(k)}q_{ij}^{(l)}q_{jm}^{(k)}q_{mi}^{(l)}) \\
& \qquad + \frac{64}{d^2} \sum_{1 \leq i \neq j \neq m \leq d}\E\left\{\zeta_i^2\zeta_j^2(1 +
      \zeta_j^2)(1+\zeta_m^2) \right\} (q_{ij}^{(k)}q_{jj}^{(l)}q_{im}^{(k)}q_{mj}^{(l)}
      + q_{ij}^{(k)}q_{jj}^{(l)}q_{jm}^{(k)}q_{mi}^{(l)}) \\
& \qquad + \frac{64}{d^2} \sum_{1 \leq i \neq j \neq m \leq d}\E\left\{\zeta_i^2\zeta_j^2(1 +
      \zeta_m^2)(1+\zeta_i^2) \right\} (q_{im}^{(k)}q_{mj}^{(l)}q_{ii}^{(k)}q_{ij}^{(l)}
      + q_{im}^{(k)}q_{mj}^{(l)}q_{ji}^{(k)}q_{ii}^{(l)}) \\
& \qquad + \frac{64}{d^2}\sum_{1 \leq i \neq j \neq m \leq d} \E\left\{\zeta_i^2\zeta_j^2(1 +
      \zeta_m^2)(1+\zeta_j^2) \right\} (q_{im}^{(k)}q_{mj}^{(l)}q_{ij}^{(k)}q_{jj}^{(l)}
      + q_{im}^{(k)}q_{mj}^{(l)}q_{jj}^{(k)}q_{ji}^{(l)}) \\
& \qquad + \frac{64}{d^2}\sum_{1 \leq i \neq j \leq d} \E\left\{\zeta_i^2\zeta_j^2(1+\zeta_i^2)^2 \right\} (q_{ii}^{(k)}q_{ij}^{(l)}q_{ii}^{(k)}q_{ij}^{(l)}
      + q_{ii}^{(k)}q_{ij}^{(l)}q_{ji}^{(k)}q_{ii}^{(l)}) \\
& \qquad + \frac{64}{d^2} \sum_{1 \leq i \neq j \leq d} \E\left\{\zeta_i^2\zeta_j^2(1 +
      \zeta_i^2)(1+\zeta_j^2) \right\} (q_{ii}^{(k)}q_{ij}^{(l)}q_{ij}^{(k)}q_{jj}^{(l)}
      + q_{ii}^{(k)}q_{ij}^{(l)}q_{jj}^{(k)}q_{ji}^{(l)}) \\
& \qquad + \frac{64}{d^2} \sum_{1 \leq i\neq j \leq d} \E\left\{\zeta_i^2\zeta_j^2(1 +
      \zeta_j^2)(1+\zeta_i^2) \right\} (q_{ij}^{(k)}q_{jj}^{(l)}q_{ii}^{(k)}q_{ij}^{(l)}
      + q_{ij}^{(k)}q_{jj}^{(l)}q_{ji}^{(k)}q_{ii}^{(l)}) \\
& \qquad + \frac{64}{d^2} \sum_{1 \leq i \neq j \leq d} \E\left\{\zeta_i^2\zeta_j^2(1+\zeta_j^2)^2 \right\} (q_{ij}^{(k)}q_{jj}^{(l)}q_{ij}^{(k)}q_{jj}^{(l)}
      + q_{ij}^{(k)}q_{jj}^{(l)}q_{jj}^{(k)}q_{ji}^{(l)}) \\
& = \frac{256}{d^2} \sum_{1 \leq i \neq j \neq m \neq n \leq d}  (q_{im}^{(k)}q_{mj}^{(l)}q_{in}^{(k)}q_{nj}^{(l)}
      + q_{im}^{(k)}q_{mj}^{(l)}q_{jn}^{(k)}q_{ni}^{(l)}) \\
& \qquad + \frac{128}{d^2}\sum_{1 \leq i \neq j \neq m \leq d} (1 + \mu_i^{(4)}) (q_{ii}^{(k)}q_{ij}^{(l)}q_{im}^{(k)}q_{mj}^{(l)}
      + q_{ii}^{(k)}q_{ij}^{(l)}q_{jm}^{(k)}q_{mi}^{(l)}) \\
& \qquad + \frac{128}{d^2} \sum_{1 \leq i \neq j \neq m \leq d} (1 + \mu_j^{(4)}) (q_{ij}^{(k)}q_{jj}^{(l)}q_{im}^{(k)}q_{mj}^{(l)}
      + q_{ij}^{(k)}q_{jj}^{(l)}q_{jm}^{(k)}q_{mi}^{(l)}) \\
& \qquad + \frac{128}{d^2} \sum_{1 \leq i \neq j \neq m \leq d} (1 + \mu_i^{(4)}) (q_{im}^{(k)}q_{mj}^{(l)}q_{ii}^{(k)}q_{ij}^{(l)}
      + q_{im}^{(k)}q_{mj}^{(l)}q_{ji}^{(k)}q_{ii}^{(l)}) \\
& \qquad + \frac{128}{d^2}\sum_{1 \leq i \neq j \neq m \leq d} (1 + \mu_j^{(4)})(q_{im}^{(k)}q_{mj}^{(l)}q_{ij}^{(k)}q_{jj}^{(l)}
      + q_{im}^{(k)}q_{mj}^{(l)}q_{jj}^{(k)}q_{ji}^{(l)}) \\
& \qquad + \frac{64}{d^2}\sum_{1 \leq i \neq j \leq d} (\mu_i^{(6)} +
  2\mu_i^{(4)} + 1)(q_{ii}^{(k)}q_{ij}^{(l)}q_{ii}^{(k)}q_{ij}^{(l)}
      + q_{ii}^{(k)}q_{ij}^{(l)}q_{ji}^{(k)}q_{ii}^{(l)}) \\
& \qquad + \frac{64}{d^2} \sum_{1 \leq i \neq j \leq d} (\mu_i^{(4)} +
  1)(\mu_j^{(4)} + 1)(q_{ii}^{(k)}q_{ij}^{(l)}q_{ij}^{(k)}q_{jj}^{(l)}
      + q_{ii}^{(k)}q_{ij}^{(l)}q_{jj}^{(k)}q_{ji}^{(l)}) \\
& \qquad + \frac{64}{d^2} \sum_{1 \leq i\neq j \leq d} (\mu_i^{(4)} +
  1)(\mu_j^{(4)} + 1)(q_{ij}^{(k)}q_{jj}^{(l)}q_{ii}^{(k)}q_{ij}^{(l)}
      + q_{ij}^{(k)}q_{jj}^{(l)}q_{ji}^{(k)}q_{ii}^{(l)}) \\
& \qquad + \frac{64}{d^2} \sum_{1 \leq i \neq j \leq d}  (\mu_j^{(6)} +
  2\mu_j^{(4)} + 1)(q_{ij}^{(k)}q_{jj}^{(l)}q_{ij}^{(k)}q_{jj}^{(l)}
      + q_{ij}^{(k)}q_{jj}^{(l)}q_{jj}^{(k)}q_{ji}^{(l)}) \\
& = \frac{256}{d^2} \sum_{m,n=1}^d \sum_{1 \leq i \neq j \leq d}   (q_{im}^{(k)}q_{mj}^{(l)}q_{in}^{(k)}q_{nj}^{(l)}
      + q_{im}^{(k)}q_{mj}^{(l)}q_{jn}^{(k)}q_{ni}^{(l)})  \\
& \qquad + \frac{128}{d^2}\sum_{1 \leq i \neq j \neq m \leq d} (\mu_i^{(4)}-1) (q_{ii}^{(k)}q_{ij}^{(l)}q_{im}^{(k)}q_{mj}^{(l)}
      + q_{ii}^{(k)}q_{ij}^{(l)}q_{jm}^{(k)}q_{mi}^{(l)}) \\
& \qquad + \frac{128}{d^2} \sum_{1 \leq i \neq j \neq m \leq d} (\mu_j^{(4)}-1) (q_{ij}^{(k)}q_{jj}^{(l)}q_{im}^{(k)}q_{mj}^{(l)}
      + q_{ij}^{(k)}q_{jj}^{(l)}q_{jm}^{(k)}q_{mi}^{(l)}) \\
& \qquad + \frac{128}{d^2} \sum_{1 \leq i \neq j \neq m \leq d} (\mu_i^{(4)}-1) (q_{im}^{(k)}q_{mj}^{(l)}q_{ii}^{(k)}q_{ij}^{(l)}
      + q_{im}^{(k)}q_{mj}^{(l)}q_{ji}^{(k)}q_{ii}^{(l)}) \\
& \qquad + \frac{128}{d^2}\sum_{1 \leq i \neq j \neq m \leq d} (\mu_j^{(4)}-1)(q_{im}^{(k)}q_{mj}^{(l)}q_{ij}^{(k)}q_{jj}^{(l)}
      + q_{im}^{(k)}q_{mj}^{(l)}q_{jj}^{(k)}q_{ji}^{(l)}) \\
& \qquad + \frac{64}{d^2}\sum_{1 \leq i \neq j \leq d} (\mu_i^{(6)} +
  2\mu_i^{(4)} -3)(q_{ii}^{(k)}q_{ij}^{(l)}q_{ii}^{(k)}q_{ij}^{(l)}
      + q_{ii}^{(k)}q_{ij}^{(l)}q_{ji}^{(k)}q_{ii}^{(l)}) \\
& \qquad + \frac{64}{d^2} \sum_{1 \leq i \neq j \leq d}
  (\mu_i^{(4)}\mu_j^{(4)} + \mu_i^{(4)} + \mu_j^{(4)} - 3)(q_{ii}^{(k)}q_{ij}^{(l)}q_{ij}^{(k)}q_{jj}^{(l)}
      + q_{ii}^{(k)}q_{ij}^{(l)}q_{jj}^{(k)}q_{ji}^{(l)}) \\
& \qquad + \frac{64}{d^2} \sum_{1 \leq i\neq j \leq d}  (\mu_i^{(4)}\mu_j^{(4)} + \mu_i^{(4)} + \mu_j^{(4)} - 3)(q_{ij}^{(k)}q_{jj}^{(l)}q_{ii}^{(k)}q_{ij}^{(l)}
      + q_{ij}^{(k)}q_{jj}^{(l)}q_{ji}^{(k)}q_{ii}^{(l)}) \\
& \qquad + \frac{64}{d^2} \sum_{1 \leq i \neq j \leq d}  (\mu_j^{(6)} +
  2\mu_j^{(4)} -3)(q_{ij}^{(k)}q_{jj}^{(l)}q_{ij}^{(k)}q_{jj}^{(l)}
      + q_{ij}^{(k)}q_{jj}^{(l)}q_{jj}^{(k)}q_{ji}^{(l)}) \\
& =  \frac{256}{d^2} \sum_{m,n=1}^d \sum_{1 \leq i \neq j \leq d}   (q_{im}^{(k)}q_{mj}^{(l)}q_{in}^{(k)}q_{nj}^{(l)}
      + q_{im}^{(k)}q_{mj}^{(l)}q_{jn}^{(k)}q_{ni}^{(l)}) \\
& \qquad + \frac{256}{d^2}\sum_{1\leq i \neq j \neq m \leq d}^d
  (\mu_i^{(4)} - 1)(q_{ii}^{(k)}q_{ij}^{(l)}q_{im}^{(k)}q_{mj}^{(l)} +
  q_{ii}^{(k)}q_{ij}^{(l)}q_{jm}^{(k)}q_{mi}^{(l)} \\ & \qquad \qquad +
  q_{ji}^{(k)}q_{ii}^{(l)}q_{jm}^{(k)}q_{mi}^{(l)} +
                                                        q_{ji}^{(k)}q_{ii}^{(l)}q_{im}^{(k)}q_{mj}^{(l)})
  \\
& \qquad + \frac{64}{d^2}\sum_{1 \leq i \neq j \leq d} (\mu_i^{(6)} +
  2\mu_i^{(4)} - 3)(q_{ii}^{(k)}q_{ij}^{(l)}q_{ii}^{(k)}q_{ij}^{(l)} +
  q_{ii}^{(k)}q_{ij}^{(l)}q_{ji}^{(k)}q_{ii}^{(l)} \\
& \qquad \qquad +
  q_{ji}^{(k)}q_{ii}^{(l)}q_{ji}^{(k)}q_{ii}^{(l)} +
  q_{ji}^{(k)}q_{ii}^{(l)}q_{ii}^{(k)}q_{ij}^{(l)}) \\
& \qquad + \frac{64}{d^2} \sum_{1 \leq i \neq j \leq d}
  (\mu_i^{(4)}\mu_j^{(4)} + \mu_i^{(4)} + \mu_j^{(4)} -
  3)(q_{ii}^{(k)}q_{ij}^{(l)}q_{jj}^{(k)}q_{ji}^{(l)}
  +2q_{ii}^{(k)}q_{ij}^{(l)}q_{ij}^{(k)}q_{jj}^{(l)} \\
& \qquad \qquad + q_{ij}^{(k)}q_{jj}^{(l)}q_{ji}^{(k)}q_{ii}^{(l)}) \\
& = \frac{256}{d^2} \sum_{i,j,m,n=1}^d (q_{im}^{(k)}q_{mj}^{(l)}q_{in}^{(k)}q_{nj}^{(l)}
      + q_{im}^{(k)}q_{mj}^{(l)}q_{jn}^{(k)}q_{ni}^{(l)})  - \frac{512}{d^2} \sum_{i,j,m=1}^d
  q_{ij}^{(k)}q_{ji}^{(l)}q_{im}^{(k)}q_{mi}^{(l)} \\
& \qquad + \frac{256}{d^2}\sum_{i,j,m=1}^d
  (\mu_i^{(4)} - 1)(q_{ii}^{(k)}q_{ij}^{(l)}q_{im}^{(k)}q_{mj}^{(l)} +
  q_{ii}^{(k)}q_{ij}^{(l)}q_{jm}^{(k)}q_{mi}^{(l)} \\ & \qquad \qquad +
  q_{ji}^{(k)}q_{ii}^{(l)}q_{jm}^{(k)}q_{mi}^{(l)} +
                                                        q_{ji}^{(k)}q_{ii}^{(l)}q_{im}^{(k)}q_{mj}^{(l)})
  \\
& \qquad - \frac{256}{d^2}\sum_{i,j=1}^d (\mu_i^{(4)} -
  1) \left\{(q_{ii}^{(k)}q_{ij}^{(l)})^2 +
  (q_{ji}^{(k)}q_{ii}^{(l)})^2\right\}
  \\
& \qquad - \frac{256}{d^2}\sum_{i,j=1}^d (\mu_i^{(4)} - 1)(q_{ii}^{(k)}q_{ij}^{(l)}q_{ij}^{(k)}q_{jj}^{(l)} +
  q_{ii}^{(k)}q_{ij}^{(l)}q_{jj}^{(k)}q_{ji}^{(l)} \\ & \qquad \qquad +
  q_{ji}^{(k)}q_{ii}^{(l)}q_{jj}^{(k)}q_{ji}^{(l)} +
                                                        q_{ji}^{(k)}q_{ii}^{(l)}q_{ij}^{(k)}q_{jj}^{(l)})
  \\
& \qquad -\frac{1536}{d^2}\sum_{i,j=1}^d (\mu_i^{(4)} - 1)q_{ii}^{(k)}q_{ii}^{(l)}q_{ij}^{(k)}q_{ji}^{(l)} 
  \\
& \qquad + \frac{2048}{d^2}\sum_{i = 1}^d (\mu_i^{(4)} -
  1)(q_{ii}^{(k)}q_{ii}^{(l)})^2 \\
& \qquad + \frac{64}{d^2}\sum_{i,j=1}^d (\mu_i^{(6)} +
  2\mu_i^{(4)} - 3)(q_{ii}^{(k)}q_{ij}^{(l)}q_{ii}^{(k)}q_{ij}^{(l)} +
  q_{ii}^{(k)}q_{ij}^{(l)}q_{ji}^{(k)}q_{ii}^{(l)} \\
& \qquad \qquad +
  q_{ji}^{(k)}q_{ii}^{(l)}q_{ji}^{(k)}q_{ii}^{(l)} +
  q_{ji}^{(k)}q_{ii}^{(l)}q_{ii}^{(k)}q_{ij}^{(l)}) \\
& \qquad + \frac{64}{d^2} \sum_{i,j=1}^d
  (\mu_i^{(4)}\mu_j^{(4)} + \mu_i^{(4)} + \mu_j^{(4)} -
  3)(q_{ii}^{(k)}q_{ij}^{(l)}q_{jj}^{(k)}q_{ji}^{(l)}
  +2q_{ii}^{(k)}q_{ij}^{(l)}q_{ij}^{(k)}q_{jj}^{(l)} \\
& \qquad \qquad + q_{ij}^{(k)}q_{jj}^{(l)}q_{ji}^{(k)}q_{ii}^{(l)}) \\
& \qquad - \frac{256}{d^2}\sum_{i = 1}^d \left\{\mu_i^{(6)} +
  (\mu_i^{(4)})^2 + 4\mu_i^{(4)} -
  6\right\}(q_{ii}^{(k)}q_{ii}^{(l)})^2 \\ 
&= \frac{256}{d^2} \sum_{i,j,m,n=1}^d (q_{im}^{(k)}q_{mj}^{(l)}q_{in}^{(k)}q_{nj}^{(l)}
      + q_{im}^{(k)}q_{mj}^{(l)}q_{jn}^{(k)}q_{ni}^{(l)})  - \frac{512}{d^2} \sum_{i,j,m=1}^d
  q_{ij}^{(k)}q_{ji}^{(l)}q_{im}^{(k)}q_{mi}^{(l)} \\
& \qquad + \frac{256}{d^2}\sum_{i,j,m=1}^d
  (\mu_i^{(4)} - 1)(q_{ii}^{(k)}q_{ij}^{(l)}q_{im}^{(k)}q_{mj}^{(l)} +
  q_{ii}^{(k)}q_{ij}^{(l)}q_{jm}^{(k)}q_{mi}^{(l)} \\ & \qquad \qquad +
  q_{ji}^{(k)}q_{ii}^{(l)}q_{jm}^{(k)}q_{mi}^{(l)} +
                                                        q_{ji}^{(k)}q_{ii}^{(l)}q_{im}^{(k)}q_{mj}^{(l)})
  \\
& \qquad + \frac{64}{d^2}\sum_{i,j=1}^d (\mu_i^{(6)}- 2\mu_i^{(4)} +
  1) \left\{(q_{ii}^{(k)}q_{ij}^{(l)})^2 +
  (q_{ji}^{(k)}q_{ii}^{(l)})^2\right\}
  \\
& \qquad + \frac{128}{d^2}\sum_{i,j=1}^d (\mu_i^{(6)} -
  10\mu_i^{(4)} +9)
  q_{ii}^{(k)}q_{ii}^{(l)}q_{ij}^{(k)} q_{ji}^{(l)} 
  \\
& \qquad + \frac{64}{d^2} \sum_{i,j=1}^d
  (\mu_i^{(4)}\mu_j^{(4)} - 3\mu_i^{(4)} + \mu_j^{(4)} +1)(q_{ii}^{(k)}q_{ij}^{(l)}q_{ij}^{(k)}q_{jj}^{(l)} +
  q_{ii}^{(k)}q_{ij}^{(l)}q_{jj}^{(k)}q_{ji}^{(l)} \\ & \qquad \qquad +
  q_{ji}^{(k)}q_{ii}^{(l)}q_{jj}^{(k)}q_{ji}^{(l)} +
                                                        q_{ji}^{(k)}q_{ii}^{(l)}q_{ij}^{(k)}q_{jj}^{(l)})\\
& \qquad - \frac{256}{d^2}\sum_{i = 1}^d \left\{\mu_i^{(6)} +
  (\mu_i^{(4)})^2 - 4\mu_i^{(4)} +
  2\right\}(q_{ii}^{(k)}q_{ii}^{(l)})^2 \\
& = \frac{256}{d^2} \left\{\tr(Q_kQ_lQ_lQ_k) +
  \tr(Q_kQ_lQ_kQ_l)\right\} - \frac{512}{d^2} \tr\left\{(Q_kQ_l)\circ
  (Q_kQ_l)\right\} \\
& \qquad + \frac{256}{d^2}\left[\tr\left\{(M_4 -
  I)Q_k^{\mathrm{D}}Q_lQ_lQ_k\right\} + \tr\left\{(M_4 -
  I)Q_k^{\mathrm{D}}Q_lQ_kQ_l\right\}\right] \\
& \qquad  + \frac{256}{d^2}\left[\tr\left\{(M_4 -
  I)Q_l^{\mathrm{D}}Q_kQ_kQ_l\right\} + \tr\left\{(M_4 -
  I)Q_l^{\mathrm{D}}Q_kQ_lQ_k\right\}\right] \\ 
& \qquad + \frac{64}{d^2}\left[ \tr\left\{(M_6 - 2M_4 + 1)
  Q_k^{\mathrm{D}}Q_lQ_lQ_l^{\mathrm{D}}\right\} + \tr\left\{(M_6 - 2M_4 + 1)
  Q_l^{\mathrm{D}}Q_kQ_kQ_l^{\mathrm{D}} \right\}\right] \\ 
& \qquad + \frac{128}{d^2} \tr\left\{(M_6 - 10M_4 +
  9)Q_k^{\mathrm{D}}Q_kQ_l^{\mathrm{D}}Q_l\right\} \\ 
& \qquad + \frac{64}{d^2}\tr\left\{M_4Q_k^{\mathrm{D}}Q_l M_4Q_l^{\mathrm{D}} Q_k
  -3M_4Q_k^{\mathrm{D}}Q_lQ_l^{\mathrm{D}}Q_k +
  Q_k^{\mathrm{D}}Q_l M_4Q_l^{\mathrm{D}} Q_k
  +Q_k^{\mathrm{D}}Q_lQ_l^{\mathrm{D}}Q_k  \right\}  \\
& \qquad +  
\frac{64}{d^2}\tr\left\{M_4Q_k^{\mathrm{D}}Q_lM_4Q_k^{\mathrm{D}}Q_l
  -3M_4Q_k^{\mathrm{D}}Q_lQ_k^{\mathrm{D}}Q_l+
 Q_k^{\mathrm{D}}Q_lM_4Q_k^{\mathrm{D}}Q_l+
  Q_k^{\mathrm{D}}Q_lQ_k^{\mathrm{D}}Q_l\right\} \\
& \qquad + \frac{64}{d^2}\tr\left\{M_4Q_l^{\mathrm{D}}Q_kM_4Q_k^{\mathrm{D}}Q_l
  -3M_4Q_l^{\mathrm{D}}Q_kQ_k^{\mathrm{D}}Q_l+
 Q_l^{\mathrm{D}}Q_kM_4Q_k^{\mathrm{D}}Q_l+
  Q_l^{\mathrm{D}}Q_kQ_k^{\mathrm{D}}Q_l\right\} \\ 
& \qquad + \frac{64}{d^2}\tr\left\{M_4Q_l^{\mathrm{D}}Q_kM_4Q_l^{\mathrm{D}}Q_k
  -3M_4Q_l^{\mathrm{D}}Q_kQ_l^{\mathrm{D}}Q_k+
 Q_l^{\mathrm{D}}Q_kM_4Q_l^{\mathrm{D}}Q_k+
  Q_l^{\mathrm{D}}Q_kQ_l^{\mathrm{D}}Q_k\right\} \\ 
& \qquad - \frac{256}{d^2}\tr\left\{(M_6 + M_4^2 - 4M_4 +
  2)(Q_k^{\mathrm{D}}Q_l^{\mathrm{D}})^2\right\} \\
& \leq \frac{512}{d}\Vert Q_k\Vert^2\Vert Q_l\Vert^2 +
  \frac{512}{d} \Vert Q_k\Vert^2\Vert Q_l\Vert^2 +
  \frac{1024\{c(\g)+1\}}{d}\Vert Q_k\Vert^2\Vert Q_l\Vert^2 \\
& \qquad + \frac{128\{3c(\g)+1\}}{d}\Vert Q_k\Vert^2\Vert Q_l\Vert^2 +
  \frac{128\{11c(\g) + 9\}}{d}\Vert Q_k\Vert^2\Vert Q_l\Vert^2 \\
& \qquad  +
  \frac{256\{c(\g)^2 +4c(\g) + 1\}}{d}\Vert Q_k\Vert^2\Vert Q_l\Vert^2 +
  \frac{256\{c(\g)^2 + 5c(\g) + 2\}}{d}\Vert Q_k\Vert^2\Vert Q_l\Vert^2 \\ \numberthis\label{I1}
& = \frac{512\{c(\g)^2 + 10c(\g) + 8\}}{d}\Vert Q_k\Vert^2\Vert Q_l\Vert^2.
\end{align*}
To bound $D_2$, 
\begin{align*}
D_2 & = \frac{16}{d^2} \sum_{1 \leq i \neq j \leq d}
  \E\left\{\zeta_j^2(\mu_i^{(3)} - 3\zeta_i -
  \zeta_i^3)^2\right\}(q_{ij}^{(k)}q_{ii}^{(l)} +
  q_{ii}^{(k)}q_{ij}^{(l)})^2 \\
& \qquad + \frac{16}{d^2} \sum_{1 \leq i \neq j \leq d}
  \E\left\{\zeta_j(\mu_j^{(3)} - 3\zeta_j -
  \zeta_j^3)\right\}\E\left\{\zeta_i(\mu_i^{(3)} - 3\zeta_i -
  \zeta_i^3)\right\} \\
& \qquad \qquad \cdot (q_{ij}^{(k)}q_{ii}^{(l)} +
  q_{ii}^{(k)}q_{ij}^{(l)})(q_{ji}^{(k)}q_{jj}^{(l)} +
  q_{jj}^{(k)}q_{ji}^{(l)}) \\
& = \frac{16}{d^2} \sum_{1 \leq i \neq j \leq d}
  \left\{\mu_i^{(6)} + 6\mu_i^{(4)} - (\mu_i^{(3)})^2 + 9\right\}(q_{ij}^{(k)}q_{ii}^{(l)} +
  q_{ii}^{(k)}q_{ij}^{(l)})^2  \\
& \qquad + \frac{16}{d^2} \sum_{1 \leq i \neq j \leq d} (\mu_i^{(4)}
  +3)(\mu_j^{(4)} + 3) (q_{ij}^{(k)}q_{ii}^{(l)} +
  q_{ii}^{(k)}q_{ij}^{(l)})(q_{ji}^{(k)}q_{jj}^{(l)} +
  q_{jj}^{(k)}q_{ji}^{(l)}) \\
& = \frac{16}{d^2} \sum_{i,j=1}^d
  \left\{\mu_i^{(6)} + 6\mu_i^{(4)} - (\mu_i^{(3)})^2 + 9\right\}(q_{ij}^{(k)}q_{ii}^{(l)} +
  q_{ii}^{(k)}q_{ij}^{(l)})^2  \\
& \qquad + \frac{16}{d^2} \sum_{i,j=1}^d (\mu_i^{(4)}
  +3)(\mu_j^{(4)} + 3) (q_{ij}^{(k)}q_{ii}^{(l)} +
  q_{ii}^{(k)}q_{ij}^{(l)})(q_{ji}^{(k)}q_{jj}^{(l)} +
  q_{jj}^{(k)}q_{ji}^{(l)}) \\
& \qquad - \frac{16}{d^2}\sum_{i = 1}^d \left\{\mu_i^{(6)} -
  (\mu_i^{(4)})^2 - (\mu_i^{(3)})^2 \right\}
  (q_{ii}^{(k)}q_{ii}^{(l)})^2 \\
& \leq \frac{16\{7c(\g) + 9\}}{d^2} \sum_{i,j=1}^d (q_{ij}^{(k)}q_{ii}^{(l)})^2 +
  \frac{32\{7c(\g) + 9\}}{d^2} \sum_{i,j=1}^d q_{ij}^{(k)}q_{ii}^{(l)}  q_{ii}^{(k)}q_{ij}^{(l)} + \frac{16\{7c(\g) + 9\}}{d^2} \sum_{i,j=1}^d ( q_{ii}^{(k)}q_{ij}^{(l)})^2 \\
& \qquad + \frac{16}{d^2} \sum_{i,j=1}^d  (\mu_i^{(4)}
  +3)(\mu_j^{(4)} +
  3)q_{ji}^{(k)}q_{ii}^{(l)}q_{ij}^{(k)}q_{jj}^{(l)} 
  + \frac{16}{d^2} \sum_{i,j=1}^d  (\mu_i^{(4)}
  +3)(\mu_j^{(4)} +
  3)q_{ji}^{(k)}q_{ii}^{(l)}q_{jj}^{(k)}q_{ji}^{(l)} \\
& \qquad + \frac{16}{d^2} \sum_{i,j=1}^d  (\mu_i^{(4)}
  +3)(\mu_j^{(4)} +
  3)q_{ii}^{(k)}q_{ij}^{(l)}q_{ij}^{(k)}q_{jj}^{(l)} + \frac{16}{d^2} \sum_{i,j=1}^d  (\mu_i^{(4)}
  +3)(\mu_j^{(4)} +
  3) q_{ii}^{(k)}q_{ij}^{(l)}q_{jj}^{(k)}q_{ji}^{(l)} \\
& \qquad +  \frac{32c(\g)^2}{d}\Vert Q_k \Vert^2 \Vert Q_l\Vert^2 \\
& = \frac{16\{7c(\g)+9\}}{d^2} \left[\tr
  \{Q_l^{\mathrm{D}}Q_kQ_kQ_l^{\mathrm{D}}\} +
  2\tr(Q_k^{\mathrm{D}}Q_kQ_lQ_l^{\mathrm{D}}) + \tr
  \{Q_k^{\mathrm{D}}Q_lQ_lQ_k^{\mathrm{D}}\} \right] \\
& \qquad + \frac{16}{d^2}\left[\tr\left\{(M_4 + 3I)Q_l^{\mathrm{D}}Q_k (M_4 + 3I)
 Q_l^{\mathrm{D}} Q_k\right\} + \tr\left\{(M_4 + 3I)
  Q_l^{\mathrm{D}} Q_k   (M_4+3I)Q_k^{\mathrm{D}}Q_l\right\}\right] \\
& \qquad + \frac{16}{d^2}\left[\tr\left\{(M_4 +
  3I)Q_k^{\mathrm{D}}Q_l(M_4 + 3I) Q_l^{\mathrm{D}}Q_k\right\} +
  \tr\left\{(M_4 + 3I)Q_k^{\mathrm{D}}Q_l(M_4 +
  3I)Q_k^{\mathrm{D}}Q_l\right\}\right] \\
& \qquad +  \frac{32c(\g)^2}{d}\Vert Q_k \Vert^2 \Vert Q_l\Vert^2 \\
& \leq \frac{64\{7c(\g)+9\}}{d} \Vert Q_k\Vert^2\Vert Q_l\Vert^2 +
  \frac{64\{c(\g) + 3\}^2}{d}\Vert Q_k\Vert^2\Vert Q_l\Vert^2 +
  \frac{32c(\g)^2}{d}\Vert Q_k \Vert^2 \Vert Q_l\Vert^2 \\ \numberthis\label{I2}
& = \frac{32\{3c(\g)^2 + 26c(\g) + 36\}}{d}\Vert Q_k \Vert^2 \Vert Q_l\Vert^2.
\end{align*}
Next, we bound $D_3$: 
\begin{align*}
D_3 & = \frac{64}{d^2}\sum_{1 \leq i \neq j \neq m \leq
      d}\E\left[\{(\zeta_i^2\zeta_j^2 - 1) + (\zeta_j^2 -
      1)\}\{(\zeta_i^2\zeta_m^2 - 1) + (\zeta_m^2 -
      1)\}\right]q_{ij}^{(k)}q_{ij}^{(l)}q_{im}^{(k)}q_{im}^{(l)} \\
& \qquad + \frac{64}{d^2} \sum_{1 \leq i \neq j \neq m \leq d} \E\left[\{(\zeta_i^2\zeta_j^2 - 1) + (\zeta_j^2 -
      1)\}\{(\zeta_j^2\zeta_m^2 - 1) + (\zeta_m^2 -
  1)\}\right]q_{ij}^{(k)}q_{ij}^{(l)}q_{jm}^{(k)}q_{jm}^{(l)} \\
& \qquad + \frac{64}{d^2} \sum_{1 \leq i \neq j \neq m \leq d} \E\left[\{(\zeta_i^2\zeta_j^2 - 1) + (\zeta_j^2 -
      1)\}\{(\zeta_m^2\zeta_i^2 - 1) + (\zeta_i^2 -
  1)\}\right]q_{ij}^{(k)}q_{ij}^{(l)}q_{mi}^{(k)}q_{mi}^{(l)} \\
& \qquad + \frac{64}{d^2} \sum_{1 \leq i \neq j \neq m \leq d} \E\left[\{(\zeta_i^2\zeta_j^2 - 1) + (\zeta_j^2 -
      1)\}\{(\zeta_m^2\zeta_j^2 - 1) + (\zeta_j^2 -
  1)\}\right]q_{ij}^{(k)}q_{ij}^{(l)}q_{mj}^{(k)}q_{mj}^{(l)} \\
& \qquad + \frac{64}{d^2} \sum_{1 \leq i \neq j \leq d} \E\left[\{(\zeta_i^2\zeta_j^2 - 1) + (\zeta_j^2 -
      1)\}\{(\zeta_i^2\zeta_j^2 - 1) + (\zeta_j^2 -
  1)\}\right](q_{ij}^{(k)}q_{ij}^{(l)})^2 \\
& \qquad + \frac{64}{d^2} \sum_{1 \leq i \neq j \leq d} \E\left[\{(\zeta_i^2\zeta_j^2 - 1) + (\zeta_j^2 -
      1)\}\{(\zeta_j^2\zeta_i^2 - 1) + (\zeta_i^2 -
  1)\}\right](q_{ij}^{(k)}q_{ij}^{(l)})^2 \\
& = \frac{64}{d^2} \sum_{1 \leq i \neq j \neq m \leq d}
  (\mu_i^{(4)} - 1) q_{ij}^{(k)}q_{ij}^{(l)}q_{im}^{(k)}q_{im}^{(l)}
  \\
& \qquad + \frac{64}{d^2} \sum_{1 \leq i \neq j \neq m \leq d} (2\mu_j^{(4)} -
  2) q_{ij}^{(k)}q_{ij}^{(l)}q_{jm}^{(k)}q_{jm}^{(l)} \\
&  \qquad + \frac{64}{d^2} \sum_{1 \leq i \neq j \neq m \leq d}
  (\mu_i^{(4)} - 1) q_{ij}^{(k)}q_{ij}^{(l)}q_{mi}^{(k)}q_{mi}^{(l)}
  \\
& \qquad + \frac{64}{d^2} \sum_{1 \leq i \neq j \neq m \leq d}
  (4\mu_j^{(4)} - 4) q_{ij}^{(k)}q_{ij}^{(l)}q_{mj}^{(k)}q_{mj}^{(l)}
  \\
& \qquad + \frac{64}{d^2} \sum_{1 \leq i \neq j \leq d} \{(\mu_i^{(4)}
  + 3)\mu_j^{(4)} -4\}(q_{ij}^{(k)}q_{ij}^{(l)})^2 \\
& \qquad + \frac{64}{d^2} \sum_{1 \leq i \neq j \leq d}
  (\mu_i^{(4)}\mu_j^{(4)} + \mu_i^{(4)} + \mu_j^{(4)} - 3)
  (q_{ij}^{(k)}q_{ij}^{(l)})^2 \\
& = \frac{512}{d^2} \sum_{1 \leq i \neq j \neq m \leq d} (\mu_i^{(4)}
  - 1) q_{ij}^{(k)}q_{ij}^{(l)}q_{im}^{(k)}q_{im}^{(l)} \\
& \qquad +
  \frac{64}{d^2}\sum_{1 \leq i \neq j \leq d} (2\mu_i^{(4)}\mu_j^{(4)}
  + 4\mu_j^{(4)} + \mu_i^{(4)} - 7)(q_{ij}^{(k)}q_{ij}^{(l)})^2 \\
& = \frac{512}{d^2} \sum_{i,j,m=1}^d (\mu_i^{(4)} -
  1)q_{ij}^{(k)}q_{ij}^{(l)}q_{im}^{(k)}q_{im}^{(l)} \\
& \qquad - \frac{512}{d^2} \sum_{i,j=1}^d (\mu_i^{(4)} -
  1)q_{ij}^{(k)}q_{ij}^{(l)}q_{ii}^{(k)}q_{ii}^{(l)} \\
& \qquad - \frac{512}{d^2} \sum_{i,j=1}^d (\mu_i^{(4)} -
  1)(q_{ij}^{(k)}q_{ij}^{(l)})^2 \\
& \qquad - \frac{512}{d^2} \sum_{i,j=1}^d  (\mu_i^{(4)} -
  1)q_{ii}^{(k)}q_{ii}^{(l)}q_{ij}^{(k)}q_{ij}^{(l)} \\
& \qquad + \frac{1024}{d^2}\sum_{i = 1}^d (\mu_i^{(4)} -
  1)(q_{ii}^{(k)}q_{ii}^{(l)})^2 \\
& \qquad + \frac{64}{d^2} \sum_{i,j=1}^d  (2\mu_i^{(4)}\mu_j^{(4)}
  + 4\mu_j^{(4)} + \mu_i^{(4)} - 7)(q_{ij}^{(k)}q_{ij}^{(l)})^2 \\
& \qquad - \frac{64}{d^2}\sum_{i = 1}^d \left\{2(\mu_i^{(4)})^2 +
  5\mu_i^{(4)} - 7\right\}(q_{ii}^{(k)}q_{ii}^{(l)})^2 \\
& = \frac{512}{d^2}\sum_{i,j,m=1}^d (\mu_i^{(4)} -
  1)q_{ij}^{(k)}q_{ij}^{(l)}q_{im}^{(k)}q_{im}^{(l)} -
  \frac{1024}{d^2} \sum_{i,j = 1}^d (\mu_i^{(4)} -
  1)q_{ii}^{(k)}q_{ii}^{(l)}q_{ij}^{(k)}q_{ij}^{(l)} \\
& \qquad + \frac{64}{d^2}\sum_{i,j=1}^d (2\mu_i^{(4)}\mu_j^{(4)} -
  4\mu_j^{(4)} + \mu_i^{(4)} +1)
  (q_{ij}^{(k)}q_{ij}^{(l)})^2 \\
& \qquad - \frac{64}{d^2} \sum_{i = 1}^d \left\{2(\mu_i^{(4)})^2 -
  11\mu_i^{(4)} +9\right\}(q_{ii}^{(k)}q_{ii}^{(l)})^2 \\
& = \frac{512}{d^2} \tr\left\{(M_4 - I)(Q_kQ_l)\circ(Q_kQ_l)\right\} -
  \frac{1024}{d^2} \tr\left\{(M_4 -
  I)Q_k^{\mathrm{D}}Q_l^{\mathrm{D}}Q_kQ_l\right\} \\
& \qquad + \frac{128}{d^2} \tr\left\{M_4(Q_k\circ Q_l)M_4  (Q_k\circ
  Q_l) - 2(Q_k\circ Q_l)M_4(Q_k\circ Q_l)\right\} \\
& \qquad + \frac{64}{d^2}\tr\left\{M_4(Q_k \circ Q_l)^2 +(Q_k \circ
  Q_l)^2 \right\} - \frac{64}{d^2} \tr\left\{(2M_4^2 - 11M_4 +
  9)(Q_k^{\mathrm{D}}Q_l^{\mathrm{D}})^2\right\} \\
& \leq \frac{128\{2c(\g)^2+20c(\g) + 17\}}{d}\Vert Q_k\Vert^2 \Vert Q_l\Vert^2 \numberthis\label{I3}.
\end{align*}
It remains to bound $D_4$, but this is easy.  By \eqref{Tid22},
\begin{equation} \label{I4}
D_4 \leq \frac{4\{c(\g)+4\}}{d}\Vert Q_k\Vert^2\Vert Q_l\Vert^2.  
\end{equation}
Combining \eqref{t11terms} and \eqref{I1}--\eqref{I4} yields
\[
\E\{(t_{11}^{kl})^2\} \leq \frac{8\{108c(\g)^2 + 763c(\g) + 930\}}{d} \Vert
Q_k\Vert^2 \Vert Q_l\Vert^2.
\]
The lemma follows. 
\end{proof}

\begin{lemma}\label{lemma:s2_unif}
 Assume that the random variables $\b_1,\ldots,\b_p,\e_1,\ldots,\e_n$
 satisfy \eqref{subgbd}.  
Additionally, define
\begin{equation}\label{omega}
\omega(\L) = \frac{1}{(\l_1+1)^2(\l_{n_0}^{-1}+1)^2}.
\end{equation}
\begin{itemize}
\item[(a)] There is an absolute constant $0 < C < \infty$ such that
\[
\P\left\{\sup_{0 \leq \eta^2 < \infty}  \left\vert \s_*^2(\eta^2) -
     \s_0^2(\eta^2)\right\vert > 
  r\Bigg\vert X\right\} \leq C\exp\left[-\frac{n}{C}\min\left\{\frac{r^2}{\g^4}\omega(\L)^2,\frac{r}{\g^2}\omega(\L)\right\}\right]
\]
for all $r \geq 0$.
\item[(b)] Assume that $n_0 = n$.  There is an absolute constant $0 <
  C < \infty$ such that 
\[
\P\left\{\sup_{0 \leq \eta^2 < \infty}  \eta^2\left\vert \s_*^2(\eta^2) -
     \s_0^2(\eta^2)\right\vert > 
  r\Bigg\vert X\right\}\leq
C\exp\left[-\frac{n}{C}\min\left\{\frac{r^2}{\g^4} \omega(\L)^2,\frac{r}{\g^2}\omega(\L)\right\}\right]
\]
for all $r \geq 0$.  
\end{itemize}
\end{lemma}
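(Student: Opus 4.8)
Both parts will be deduced from the uniform Hanson--Wright bound, Theorem~\ref{thm:QF_conc}, applied with $K=1$, once $\s_*^2(\eta^2)$ (and, for part (b), $\eta^2\s_*^2(\eta^2)$) is exhibited as a centered quadratic form in sub-Gaussian coordinates. The plan is to set $\bz=(p^{1/2}\b_1,\ldots,p^{1/2}\b_p,\e_1,\ldots,\e_n)^\T\in\R^{p+n}$, whose entries are independent, mean zero, with $\Vert\zeta_i\Vert_{\psi_2}\leq\g$ by \eqref{subgbd}. Writing $W=[\,p^{-1/2}X\ \ I_n\,]$ and using the eigendecomposition $p^{-1}XX^\T=U\L U^\T$, one has $\y=W\bz$ and
\[
\s_*^2(\eta^2)=\tfrac1n\bz^\T W^\T\Bigl(\tfrac{\eta^2}{p}XX^\T+I\Bigr)^{-1}W\bz=\bz^\T Q(\eta^2)\bz,\qquad Q(\eta^2)=VT(\eta^2)V^\T,
\]
with $V=W^\T U$ and $T(\eta^2)=\tfrac1n\diag\{(\eta^2\l_i+1)^{-1}\}_{i=1}^n$. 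Since $Q(\eta^2)$ is nonrandom given $X$, we get $\s_0^2(\eta^2)=\E\{\bz^\T Q(\eta^2)\bz\mid X\}$, so $\s_*^2(\eta^2)-\s_0^2(\eta^2)$ is exactly the object controlled by Theorem~\ref{thm:QF_conc}; moreover $V^\T V=\L+I_n$, hence $\Vert V^\T V\Vert=\l_1+1$.

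To replace the unbounded range $\eta^2\in[0,\infty)$ by a compact box I would reparametrize by $u=\eta^2/(1+\eta^2)$, so that $(\eta^2\l_i+1)^{-1}$ becomes $\widetilde t_i(u)=(1-u)/\{u(\l_i-1)+1\}$, which extends continuously to $u=1$ (value $0$ if $\l_i>0$, value $1$ if $\l_i=0$). By continuity $\sup_{\eta^2\geq0}|\s_*^2(\eta^2)-\s_0^2(\eta^2)|=\sup_{u\in[0,1]}|\cdots|$, so Theorem~\ref{thm:QF_conc} applies with $R=1$, $d=p+n$, $m=n$ and diagonal functions $\tfrac1n\widetilde t_i$. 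A short computation gives $\widetilde t_i'(u)=-\l_i/\{u(\l_i-1)+1\}^2$; since $u(\l_i-1)+1$ is a convex combination of $\l_i$ and $1$ on $[0,1]$, this is bounded in modulus by $\l_1$ when $\l_i\geq1$, by $\l_i^{-1}\leq\l_{n_0}^{-1}$ when $0<\l_i<1$, and vanishes when $\l_i=0$, so one may take $L=\max(\l_1,\l_{n_0}^{-1})/n$ in \eqref{lipschitz}. Also $\widetilde T(0)=\tfrac1n I_n$, so $\Vert\widetilde T(0)\Vert=\tfrac1n$ and $\Vert\widetilde T(0)\Vert_{\HS}^2=\tfrac1n$.

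Substituting these quantities into Theorem~\ref{thm:QF_conc}, the two denominators collapse: using $\Vert V^\T V\Vert=\l_1+1$, both $\Vert\widetilde T(0)\Vert_{\HS}^2+KL^2R^2m$ times $\Vert V^\T V\Vert^2$, and $\Vert\widetilde T(0)\Vert+K^{1/2}LR$ times $\Vert V^\T V\Vert$, are at most absolute multiples of $1/(n\,\omega(\L)^2)$ and $1/(n\,\omega(\L))$ respectively (this is just the elementary inequalities $(\l_1+1)^2\le(\l_1+1)^4(\l_{n_0}^{-1}+1)^4$ etc.). Hence the exponent in Theorem~\ref{thm:QF_conc} is at most $-\tfrac1C\min\{nr^2\omega(\L)^2/\g^4,\ nr\omega(\L)/\g^2\}$ with $C$ absolute, which is the bound asserted in part (a). The side condition $r^2\geq C\g^4\Vert V^\T V\Vert^2K^3L^2R^2m\asymp\g^4/(n\,\omega(\L)^2)$ only excludes values of $r$ for which $C\exp\{-\tfrac nC r^2\omega(\L)^2/\g^4\}\geq1$, so after enlarging the absolute constant the claim is trivial there. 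For part (b), when $n_0=n$, I would run the same argument with $\eta^2\s_*^2(\eta^2)=\bz^\T VT^{(b)}(\eta^2)V^\T\bz$, $T^{(b)}(\eta^2)=\tfrac1n\diag\{\eta^2(\eta^2\l_i+1)^{-1}\}$; under $u=\eta^2/(1+\eta^2)$ the entries become $\tfrac1n\cdot u/\{u(\l_i-1)+1\}$, well defined on $[0,1]$ since every $\l_i>0$, with derivative bounded by $(\l_{n_0}^{-1}+1)^2$ and $T^{(b)}(0)=0$, and the identical computation yields the same $\omega(\L)$ bound. The main obstacle is precisely this bookkeeping: checking that the wasteful Lipschitz bound together with $\Vert V^\T V\Vert$, $\Vert T(0)\Vert$ and $\Vert T(0)\Vert_{\HS}$ combine, after taking the minimum of the two exponents, to exactly the $\omega(\L)$ of \eqref{omega} --- the delicate point being the small-eigenvalue regime $\l_i<1$ that produces the $\l_{n_0}^{-1}$ factor --- and disposing cleanly of the $r$-range restriction of Theorem~\ref{thm:QF_conc}.
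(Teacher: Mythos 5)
Your proposal is correct and takes essentially the same route as the paper: write $\s_*^2(\eta^2)$ (resp.\ $\eta^2\s_*^2(\eta^2)$) as a quadratic form $\bz^{\T}VT(\cdot)V^{\T}\bz$ with $V^{\T}V=\L+I$, compactify the range of $\eta^2$, and invoke Theorem \ref{thm:QF_conc} with $K=1$, disposing of the side condition on $r$ by enlarging the absolute constant. The only difference is cosmetic: you compactify with the single substitution $u=\eta^2/(1+\eta^2)$, whereas the paper splits $[0,\infty)$ into $[0,1]$ and $[1,\infty)$ and inverts $\eta^2\mapsto\eta^{-2}$ on the second piece; both yield Lipschitz constants of order $\max(\l_1,\l_{n_0}^{-1})$ (resp.\ $(\l_{n_0}^{-1}+1)^2$ for part (b)) and hence the same $\omega(\L)$.
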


\begin{proof}
Define $\bz = (p^{1/2}\bb^{\T},\be^{\T})^{\T} \in
\R^{p+n}$ and $t_i(\eta^2) = (\eta^2\l_i+1)^{-1}$.
 Then
\[
\s_*^2(\eta^2)  = \frac{1}{n}\bz^{\T}Q(\eta^2)\bz,
\] 
where $Q(\eta^2) = VT(\eta^2)V^{\T}$, $T(\eta^2) =
\diag\{t_1(\eta^2),\ldots,t_n(\eta^2)\}$ and
\[
V = \left[\begin{array}{c} p^{-1/2}X^{\T} \\ I \end{array}\right]U.  
\]
Thus, $\s_*^2(\eta^2)$ can be expressed as a quadratic form and we can
apply Theorem \ref{thm:QF_conc}with $K = 1$.  

We prove part (b) of the lemma first.  Assume that $n_0 = n$ and notice
\begin{align}\nonumber
\P\Bigg\{\sup_{0 \leq \eta^2 < \infty} \eta^2\big\vert \s_*^2(\eta^2) &-
    \s_0^2(\eta^2)\big\vert  > 
  r\Bigg\vert X\Bigg\}  \\ \nonumber & = \P\left[\left.\sup_{0 \leq \eta^2 < \infty} \eta^2\vert\bz^{\T}Q(\eta^2)\bz -
  \E\{\bz^{\T}Q(\eta^2)\bz\vert X\}\vert > ns\right\vert X\right]  \\ \label{pbbd1}
                         & \leq P_1^- + P_1^+,
\end{align}
where
\begin{align*}
P_1^- & = \P\left[\left.\sup_{0 \leq \eta^2 \leq 1} \eta^2\vert\bz^{\T}Q(\eta^2)\bz -
  \E\{\bz^{\T}Q(\eta^2)\bz\vert X\}\vert > nr\right\vert X\right], \\
P_1^+ & = \P\left[\left.\sup_{1 \leq \eta^2 < \infty} \eta^2\vert\bz^{\T}Q(\eta^2)\bz -
  \E\{\bz^{\T}Q(\eta^2)\bz\vert X\}\vert > nr\right\vert X\right].
\end{align*}
We'll apply Theorem \ref{thm:QF_conc} twice,
to $P_1^-$ and $P_1^+$ separately.  In order to apply Theorem
\ref{thm:QF_conc} to $P_1^-$, we need to derive a Lipschitz bound,
as in \eqref{lipschitz}.  For $0 \leq u^2,v^2 \leq 1$, 
\[
\vert u^2t_i(u^2) - v^2t_i(v^2) \vert = \left\vert\frac{u^2}{u^2\l_i + 1} -
                                  \frac{v^2}{v^2\l_i + 1}\right\vert 
                                  \leq  \vert u^2 - v^2\vert.
\]
Additionally, for $\eta^2 \geq 0$, we have the bounds 
\begin{equation}\label{other_bds}
\Vert VV^{\T} \Vert \leq \l_1 + 1, \ \ \ \Vert \eta^2T(\eta^2) \Vert
\leq \frac{\eta^2}{\eta^2\l_{n_0}+1} , \
\ \ \Vert \eta^2T(\eta^2)\Vert_{\HS}^2 \leq
\left(\frac{\eta^2}{\eta^2\l_{n_0}+1}\right)^2 n,
\end{equation}
where we have used the fact that $\l_n = \l_{n_0} > 0$.  
Thus, Theorem \ref{thm:QF_conc} implies that there is a constant
$0 < C_1^- < \infty$ such that
\begin{equation}\label{pbminus}
P_1^- \leq C_1^- \exp\left[-\frac{n}{C_1^-}\min\left\{\frac{r^2}{\g^4(\l_1+1)^2},\frac{r}{\g^2(\l_1+1)}\right\}\right],
\end{equation}
whenever $r^2 \geq C_1^-\g^4(\l_1+1)^2n^{-1}$.

We can't immediately apply Theorem \ref{thm:QF_conc} to bound $P_1^+$,
because the supremum inside the probability is over a non-compact
interval.  However, observe that $P_1^+$ can be rewritten as
\[
P_1^+ = \P\left[\left.\sup_{0 < \eta^2 \leq 1} \eta^{-2}\vert
  \bz^{\T}Q(\eta^{-2})\bz - \E\{\bz^{\T}Q(\eta^{-2})\bz\vert X\}\vert
  > nr \right\vert X\right].
\]
Now we can apply Theorem \ref{thm:QF_conc} as soon as we derive the
required Lipschitz bound.  For $0 < u^2,v^2 \leq 1$,
\[
\vert u^{-2}t_i(u^{-2}) - v^{-2}t_i(v^{-2})\vert = \left\vert
  \frac{1}{\l_i + u^2} - \frac{1}{\l_i + v^2}\right\vert \leq
\l_{n_0}^{-2}\vert u^2 - v^2\vert,
\]
where we have again used the fact that $\l_n = \l_{n_0} > 0$.  
Combining this with \eqref{other_bds} and Theorem \ref{thm:QF_conc}
implies that these exists a constant $0 < C_1^+ < \infty$ such
that
\begin{equation}\label{pbplus}
P_1^+ \leq C_1^+
\exp\left[-\frac{n}{C_1^+}\min\left\{\frac{r^2}{\g^4(\l_1+1)^2(\l_{n_0}^{-1}+1)^4},\frac{r}{\g^2(\l_1+1)(\l_{n_0}^{-1}+1)^2}\right\}\right],
\end{equation}
whenever $r^2 \geq C_1^+\g^4(\l_1+1)^2(\l_{n_0}+1)^4n^{-1}$.  Part (b) of
the lemma follows by combining \eqref{pbbd1} and
\eqref{pbminus}--\eqref{pbplus}.

To prove part (a), drop the assumption that $n_0 = n$.  Our proof
strategy is the same as in part (b), but the
proof is easier because  we
don't need to worry about whether or not $\l_n = 0$.  Briefly, observe that for all
$\eta^2 \geq 0$, $\Vert T(\eta^2) \Vert \leq 1, \ \ \Vert
T(\eta^2)\Vert_{\HS}^2 \leq n$.  Additionally, 
if $0 \leq
u^2,v^2 \leq 1$, then
$\vert t_i(u^2) - t_i(v^2)\vert  \leq \l_1\vert u^2 - v^2 \vert$ and
$\vert t_i(u^{-2}) - t_i(v^{-2}) \vert  \leq \l_{n_0}^{-1}\vert u^2 - v^2\vert$.
Proceeding just as in the proof of part (b), it follows that there are constants
$0 < C_2^-,C_2^+ < \infty$ such that 
\begin{align*}
\P\Bigg\{\sup_{0 \leq \eta^2 \leq1} \big\vert \s_*^2(\eta^2) &-
    \s_0^2(\eta^2)\big\vert  > 
  r\Bigg\vert X\Bigg\}  \\
& \leq C_2^-\exp\left[-\frac{n}{C_2^-}\min\left\{\frac{r^2}{\g^4(\l_1+1)^4},\frac{r}{\g^2(\l_1+1)^2}\right\}\right],
  \\
\P\Bigg\{\sup_{1 \leq \eta^2 < \infty} \big\vert \s_*^2(\eta^2) &-
    \s_0^2(\eta^2)\big\vert  > 
  r\Bigg\vert X\Bigg\} \\
& \leq C_2^+\exp\left[-\frac{n}{C_2^+}\min\left\{\frac{r^2}{\g^4(\l_1+1)^2(\l_{n_0}^{-1}+1)^2},\frac{r}{\g^2(\l_1+1)(\l_{n_0}^{-1}+1)}\right\}\right], 
\end{align*}
whenever $r^2 \geq C_2^-\g^4(\l_1+1)^4n^{-1}$ and $r \geq
C_2^+\g^4(\l_1+1)^2(\l_{n_0}^{-1}+1)^2n^{-1}$, respectively.  This implies part (a) of the
lemma.  
\end{proof}

\begin{lemma}\label{lemma:H_bd}
Let $H_0(\eta^2) = \E\{H_*(\eta^2)\vert X\}$, where $H_*(\eta^2)$ is
given in \eqref{score}.  For $\eta^2 \geq 0$,
\begin{equation}\label{hid}
H_0(\eta^2) = \frac{\s_0^2}{2n^2} \sum_{i,j=1}^n
  \frac{(\eta_0^2 - \eta^2)(\l_i -
  \l_j)^2}{(\eta^2\l_i+1)^2(\eta^2\l_j+1)^2} ,
\end{equation}
where $H_0(\eta^2) = \E\{H_*(\eta^2)\vert X\}$.
Hence, 
\begin{equation}\label{hbd}
\vert H_0(\eta^2)\vert \geq \frac{\s^2_0\vert\eta_0^2 -
  \eta^2\vert}{(\eta^2 \l_1 + 1)^4}\vfrak(\L).  
\end{equation}
\end{lemma}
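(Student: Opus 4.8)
The plan is to diagonalize everything in the eigenbasis of $p^{-1}XX^{\T}$, compute the conditional expectation term-by-term, and then collapse the resulting double sum with a Lagrange-type identity. Write $\widecheck{\y} = (\widecheck{y}_1,\dots,\widecheck{y}_n)^{\T} = U^{\T}\y$, so that $p^{-1}XX^{\T} = U\L U^{\T}$ gives
\[
\s_*^2(\eta^2) = \frac1n\sum_{i=1}^n\frac{\widecheck{y}_i^2}{\eta^2\l_i+1},\qquad
\frac1n\y^{\T}\Big(\tfrac1pXX^{\T}\Big)\Big(\tfrac{\eta^2}{p}XX^{\T}+I\Big)^{-2}\y = \frac1n\sum_{i=1}^n\frac{\l_i\widecheck{y}_i^2}{(\eta^2\l_i+1)^2},
\]
and $\tfrac1n\tr\{(\tfrac1pXX^{\T})(\tfrac{\eta^2}{p}XX^{\T}+I)^{-1}\} = \tfrac1n\sum_{j=1}^n\l_j/(\eta^2\l_j+1)$. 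Since $\Cov(\y\mid X) = \s_0^2(\eta_0^2 p^{-1}XX^{\T}+I)$ by \eqref{RE} and $\E(\y\mid X)=0$, we get $\E(\widecheck{y}_i^2\mid X) = \s_0^2(\eta_0^2\l_i+1)$, so that, from \eqref{score},
\[
H_0(\eta^2) = \frac{\s_0^2}{n}\sum_{i=1}^n\frac{\l_i(\eta_0^2\l_i+1)}{(\eta^2\l_i+1)^2}
  - \frac{\s_0^2}{n^2}\Big(\sum_{i=1}^n\frac{\eta_0^2\l_i+1}{\eta^2\l_i+1}\Big)\Big(\sum_{j=1}^n\frac{\l_j}{\eta^2\l_j+1}\Big).
\]

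The key algebraic step is the substitution $\eta_0^2\l_i+1 = (\eta_0^2-\eta^2)\l_i + (\eta^2\l_i+1)$, which splits each of the two terms above into a piece carrying the factor $(\eta_0^2-\eta^2)$ and a piece proportional to $\tfrac{\s_0^2}{n}\sum_i\l_i/(\eta^2\l_i+1)$; the latter "cross" contributions from the two terms cancel exactly, leaving
\[
H_0(\eta^2) = \s_0^2(\eta_0^2-\eta^2)\bigg[\frac1n\sum_{i=1}^n b_i^2 - \Big(\frac1n\sum_{i=1}^n b_i\Big)^2\bigg],\qquad b_i := \frac{\l_i}{\eta^2\l_i+1}.
\]
I then apply the identity $n\sum_i b_i^2 - (\sum_i b_i)^2 = \tfrac12\sum_{i,j}(b_i-b_j)^2$ together with the elementary computation $b_i - b_j = (\l_i-\l_j)/\{(\eta^2\l_i+1)(\eta^2\l_j+1)\}$ to arrive at \eqref{hid}.

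For the lower bound \eqref{hbd}, I use $0\le\l_i\le\l_1$ for all $i$, so $(\eta^2\l_i+1)^2(\eta^2\l_j+1)^2\le(\eta^2\l_1+1)^4$; pulling this uniform bound out of the sum in \eqref{hid} and recognizing $\tfrac{1}{2n^2}\sum_{i,j}(\l_i-\l_j)^2 = \tfrac1n\tr\{(p^{-1}XX^{\T})^2\} - \{\tfrac1n\tr(p^{-1}XX^{\T})\}^2 = \vfrak(\L)$ via \eqref{vfrak} yields the claimed estimate. I expect no real obstacle: the one mildly delicate point is spotting the decomposition of $\eta_0^2\l_i+1$ that forces the cancellation, after which the argument is routine bookkeeping, and the formulas remain valid when $n_0<n$ since the terms with $\l_i=0$ contribute trivially.
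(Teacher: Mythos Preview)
Your proof is correct. Both you and the paper start from the same expression for $H_0(\eta^2)$ and end with the same Lagrange-type identity, but the intermediate step differs: the paper writes $H_0$ as the double sum $\frac{\s_0^2}{n^2}\sum_{i,j}\big(\frac{\eta_0^2\l_i+1}{\eta^2\l_i+1}\big)\big(\frac{\l_i}{\eta^2\l_i+1}-\frac{\l_j}{\eta^2\l_j+1}\big)$, swaps $i\leftrightarrow j$ to obtain a second expression, and averages the two so that the factor $(\eta_0^2-\eta^2)$ emerges from the difference $\frac{\eta_0^2\l_i+1}{\eta^2\l_i+1}-\frac{\eta_0^2\l_j+1}{\eta^2\l_j+1}$. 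Your substitution $\eta_0^2\l_i+1=(\eta_0^2-\eta^2)\l_i+(\eta^2\l_i+1)$ instead isolates that factor at the outset and reduces the problem immediately to recognizing the empirical variance of $b_i=\l_i/(\eta^2\l_i+1)$, after which the Lagrange identity finishes things off. Your route is arguably a bit more transparent, since it makes the role of $(\eta_0^2-\eta^2)$ explicit from the start rather than having it appear after a symmetrization; the paper's route avoids the need to spot the substitution. The lower bound \eqref{hbd} is handled identically in both.
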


\begin{proof}
The inequality \eqref{hbd} follows from \eqref{hid} and the identity
\[
\frac{1}{2n^2}\sum_{i,j=1}^n (\l_j - \l_i)^2 = \frac{1}{n}\sum_{i =
  1}^n \l_i^2 - \frac{1}{n^2}\sum_{i,j=1}^n \l_i\l_j  = \vfrak(\L).  
\]  
To prove \eqref{hid}, rewrite $H_0(\eta^2)$ as
\begin{align*}
H_0(\eta^2) & = \frac{\s_0^2}{n}\sum_{i = 1}^n \frac{\l_i(\eta_0^2\l_i +
  1)}{(\eta^2\l_i + 1)^2} - \s_0^2\left(\frac{1}{n}\sum_{i = 1}^n
  \frac{\l_i}{\eta^2\l_i + 1}\right) \left(\frac{1}{n}\sum_{i = 1}^n
  \frac{\eta_0^2\l_i + 1}{\eta^2\l_i + 1}\right) \\
& = \frac{\s_0^2}{n^2}\sum_{i,j = 1}^n \left(\frac{\eta_0^2\l_i +
  1}{\eta^2\l_i + 1}\right)\left(\frac{\l_i}{\eta^2\l_i + 1}  -
  \frac{\l_j}{\eta^2\l_j + 1}\right) \\
& = -\frac{\s_0^2}{n^2}\sum_{i,j = 1}^n \left(\frac{\eta_0^2\l_j +
  1}{\eta^2\l_j + 1}\right)\left( 
  \frac{\l_i}{\eta^2\l_i + 1}- \frac{\l_j}{\eta^2\l_j + 1}\right),
\end{align*} 
where the last expression is obtained by interchanging the indices
$i,j$ in the previous expression. Adding the last two expressions yields
\begin{align*}
2H_0(\eta^2) & = \frac{\s_0^2}{n^2}\sum_{i,j = 1}^n \left(\frac{\eta_0^2\l_i +
  1}{\eta^2\l_i + 1} - \frac{\eta_0^2\l_j +
  1}{\eta^2\l_j + 1}\right)\left(\frac{\l_i}{\eta^2\l_i + 1}  -
  \frac{\l_j}{\eta^2\l_j + 1}\right) \\
& = \frac{\s_0^2}{n^2} \sum_{i,j=1}^n
  \frac{(\eta_0^2 - \eta^2)(\l_i -
  \l_j)^2}{(\eta^2\l_i+1)^2(\eta^2\l_j+1)^2} .
\end{align*}
Equation \eqref{hid} follows.  
\end{proof}

\begin{lemma}\label{lemma:sld}
Let $H_0(\eta^2) = \E\{H_*(\eta^2)\vert X\}$, where $H_*(\eta^2)$ is
given in \eqref{score}.  Additionally, assume that the random variables
$\b_1,\ldots,\b_p,\e_1,\ldots,\e_n$ satisfy \eqref{subgbd}. 
There is an absolute constant $0 < C < \infty$ such that 
\[
\P\left\{\left.\sup_{0 \leq \eta^2 < \infty} \vert H_*(\eta^2) - H_0(\eta^2)\vert
  > r\right\vert X\right\} \leq C \exp\left[-\frac{n}{C}\min\left\{\frac{r^2}{\g^4(\l_1+1)^{6}},\frac{r}{\g^2(\l_1+1)^3}\right\}\right],
\]
for all $r \geq 0$.  
\end{lemma}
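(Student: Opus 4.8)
The strategy is to write $H_*(\eta^2)$ as a quadratic form in one fixed sub-Gaussian vector and then invoke the uniform Hanson--Wright inequality, Theorem~\ref{thm:QF_conc}, with $K=1$ and $\u=\eta^2$, exactly as $\s_*^2(\eta^2)$ was handled in Lemma~\ref{lemma:s2_unif}. Set $\bz=(p^{1/2}\bb^{\T},\be^{\T})^{\T}\in\R^{p+n}$, so $\y=[p^{-1/2}X\mid I_n]\bz$, write $W=\big[\begin{smallmatrix}p^{-1/2}X^{\T}\\ I_n\end{smallmatrix}\big]$ and $V=WU$, where $p^{-1}XX^{\T}=U\L U^{\T}$ is the eigendecomposition. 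Diagonalizing the two resolvents in \eqref{score} in the basis $U$ gives
\[
H_*(\eta^2)=\tfrac1n\bz^{\T}Q(\eta^2)\bz,\qquad Q(\eta^2)=VT(\eta^2)V^{\T},\qquad T(\eta^2)=\diag\{t_1(\eta^2),\dots,t_n(\eta^2)\},
\]
with
\[
t_i(\eta^2)=\frac{\l_i}{(\eta^2\l_i+1)^2}-\frac{c(\eta^2)}{\eta^2\l_i+1},\qquad c(\eta^2)=\frac1n\sum_{j=1}^n\frac{\l_j}{\eta^2\l_j+1},
\]
and $V^{\T}V=U^{\T}(p^{-1}XX^{\T}+I)U=\L+I$. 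Moreover $H_0(\eta^2)=\E\{H_*(\eta^2)\mid X\}$ is exactly the centering $\tfrac1n\E\{\bz^{\T}Q(\eta^2)\bz\mid X\}$ appearing in Theorem~\ref{thm:QF_conc}. By \eqref{subgbd} the coordinates of $\bz$ are independent, mean $0$, with $\psi_2$-norm at most $\g$, so Theorem~\ref{thm:QF_conc} applies conditionally on $X$; note $T(\eta^2)$ need not be positive semidefinite, which causes no problem since the pointwise Hanson--Wright bound used in the proof of Theorem~\ref{thm:QF_conc} imposes no sign restriction.

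Because the supremum runs over the noncompact half-line, I would split $[0,\infty)=[0,1]\cup[1,\infty)$ and bound the two probabilities separately by a union bound, treating $[1,\infty)$ after the change of variable $\tau=1/\eta^2\in(0,1]$; then $t_i(1/\tau)=\l_i\tau^2(\l_i+\tau)^{-2}-\tfrac1n\sum_j\tau^2\l_j(\l_j+\tau)^{-1}(\l_i+\tau)^{-1}$ extends continuously to $\tau=0$ with limit $0$, so the supremum over $(0,1]$ equals the supremum over the compact interval $[0,1]$. On $[0,1]$, with $R=1$ and $m=n$: $\Vert V^{\T}V\Vert=\l_1+1$; $\Vert T(0)\Vert=\max_i|\l_i-\bar\l|\le\l_1$ and $\Vert T(0)\Vert_{\HS}^2=n\,\vfrak(\L)\le n\l_1^2$; and differentiating $t_i$, using $\eta^2\l_i+1\ge1$ together with $|c|\le\l_1$, $|c'|\le\l_1^2$, gives a Lipschitz constant $L\le4(\l_1+1)^2$. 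On the reparametrized interval the corresponding diagonal matrix vanishes at the left endpoint, so $\Vert T(0)\Vert=\Vert T(0)\Vert_{\HS}=0$, and the same kind of differentiation — crucially, each factor $\tau/(\l_i+\tau)$ stays bounded by $1$, so no negative power of a small $\l_i$ ever appears — gives an absolute Lipschitz constant $L\le4$. Feeding $nr$ as the deviation threshold into Theorem~\ref{thm:QF_conc} and simplifying the two branches of the minimum — on the binding $[0,1]$ branch, after absorbing $\Vert T(0)\Vert_{\HS}^2+L^2R^2m\le17n(\l_1+1)^4$ and $\Vert T(0)\Vert+LR\le5(\l_1+1)^2$, these become $nr^2/\{17\g^4(\l_1+1)^6\}$ and $nr/\{5\g^2(\l_1+1)^3\}$ — gives the asserted bound, valid whenever $r^2\ge C\g^4(\l_1+1)^6/n$. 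For smaller $r$ the claimed right-hand side is at least $1$ once the absolute constant is taken large enough (then $n\min\{r^2/\{\g^4(\l_1+1)^6\},\,r/\{\g^2(\l_1+1)^3\}\}$ is $O(1)$), so the inequality holds there trivially; a final union bound over the two intervals finishes the proof.

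The matrix manipulations in the first paragraph are routine. The part needing care is the second: verifying that the Lipschitz constants of the $t_i$ (and of their reparametrizations $t_i(1/\tau)$) are controlled by $(\l_1+1)^2$ and by an absolute constant respectively, with no blow-up as some eigenvalues $\l_i$ approach $0$ — this is exactly why Lemma~\ref{lemma:sld}, unlike Lemma~\ref{lemma:s2_unif}(b), needs no hypothesis on $n_0$ and no factor $\l_{n_0}^{-1}+1$ — and then checking that the combination $\Vert T(0)\Vert_{\HS}^2+KL^2R^2m$ collapses to the single power $(\l_1+1)^6$ recorded in the statement.
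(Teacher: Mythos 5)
Your proposal is correct and follows essentially the same route as the paper's proof: the same quadratic-form representation $H_*(\eta^2)=n^{-1}\bz^{\T}VT(\eta^2)V^{\T}\bz$ (your $t_i$ is algebraically identical to the paper's $\tfrac1n\sum_j(\l_i-\l_j)(\eta^2\l_i+1)^{-2}(\eta^2\l_j+1)^{-1}$), the same split of $[0,\infty)$ into $[0,1]$ and $[1,\infty)$ via the reparametrization $\eta^2\mapsto\eta^{-2}$, and the same two applications of Theorem \ref{thm:QF_conc} with a Lipschitz constant of order $\l_1^2$ on the first piece and an absolute one on the second. Your specific numerical constants are slightly optimistic (on the reparametrized interval a careful differentiation gives a bound closer to $6$ than $4$; the paper uses $24$), but this is immaterial given the unspecified absolute constant $C$, and your explicit handling of the small-$r$ regime excluded by the hypothesis of Theorem \ref{thm:QF_conc} is a point the paper leaves implicit.
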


\begin{proof}  The proof is similar to that of Lemma \ref{lemma:s2_unif}.
 Let $\bz =
(p^{1/2}\bb^{\T},\be^{\T})^{\T} \in \R^{p + n}$ and rewrite 
\[
H_*(\eta^2)  = \frac{1}{n}\bz^{\T}Q(\eta^2) \bz,
\]
where $Q(\eta^2) = VT(\eta^2)V^{\T}$, $T(\eta^2) = \diag\{t_1(\eta^2),\ldots,t_n(\eta^2)\}$, 
\begin{align*}
t_i(\eta^2) & = \frac{1}{n}\sum_{j =
  1}^n\frac{\l_i - \l_j}{(\eta^2\l_i + 1)^2(\eta^2\l_j+1)}, \ \ i =
              1,\ldots,n, \\
V & = \left[\begin{array}{c} p^{-1/2}X^{\T} \\ I \end{array}\right]U.
\end{align*}
Then
\begin{align}\nonumber
\P\Bigg\{\sup_{0 \leq \eta^2 < \infty} \vert H_*(\eta^2) - H_0(\eta^2)\vert
  > r\Bigg\vert X\Bigg\} & = \P\left[\left.\sup_{0 \leq \eta^2 < \infty} \vert \bz^{\T}Q(\eta^2)\bz - \E\{\bz^{\T}Q(\eta^2)\bz\}\vert
  > n r\right\vert X\right] \\ \label{p1ad1}
& = P_1 + P_2,
\end{align}
where
\begin{align*}
P_1 & = \P\left[\left.\sup_{0 \leq \eta^2 \leq 1} \vert \bz^{\T}Q(\eta^2)\bz - \E\{\bz^{\T}Q(\eta^2)\bz\}\vert
  > nr\right\vert X\right], \\
P_2 & = \P\left[\left.\sup_{1 \leq \eta^2 <\infty} \vert \bz^{\T}Q(\eta^2)\bz - \E\{\bz^{\T}Q(\eta^2)\bz\}\vert
  > nr\right\vert X\right].
\end{align*}
We bound $P_1$ and $P_2$ separately, using Theorem \ref{thm:QF_conc}.  

In order to apply Theorem
\ref{thm:QF_conc}, again we need to check the Lipschitz condition
\eqref{lipschitz} and get bounds for $\Vert V^{\T}V\Vert$, $\Vert
T(0) \Vert$, and $\Vert T(0)\Vert_{\HS}^2$.   If $0 \leq u^2,v^2 \leq 1$, then we have the Lipschitz bound
\begin{align}\nonumber
\vert t_{i}(u^2) - t_{i}(v^2) \vert & = \left\vert\frac{1}{n}\sum_{j =
  1}^n\frac{\l_i - \l_j}{(u^2\l_i + 1)^2(u^2\l_j+1)}-\frac{1}{n}\sum_{j =
  1}^n\frac{\l_i - \l_j}{(v^2\l_i + 1)^2(v^2\l_j+1)} \right\vert \\ \nonumber
& \leq \frac{1}{n}\sum_{j = 1}^n \frac{\l_i^2\l_j\vert \l_i -\l_j\vert
  \vert v^6 - u^6\vert }{(u^2\l_i +
  1)^2(u^2\l_j+1) (v^2\l_i + 1)^2(v^2\l_j+1)}\\ \nonumber 
& \qquad + \frac{1}{n}\sum_{j = 1}^n \frac{ (\l_i^2 + 2\l_i \l_j)\vert \l_i -\l_j\vert
 \vert v^4
  - u^4 \vert }{(u^2\l_i +
  1)^2(u^2\l_j+1) (v^2\l_i + 1)^2(v^2\l_j+1)}\\ \nonumber
& \qquad + \frac{1}{n}\sum_{j = 1}^n \frac{ (2\l_i + \l_j)\vert \l_i -\l_j\vert
 \vert v^2 - u^2\vert}{(u^2\l_i +
  1)^2(u^2\l_j+1) (v^2\l_i + 1)^2(v^2\l_j+1)}\\ \label{lip_bd1}
& \leq 24\l_1^2\vert u^2 - v^2 \vert,
\end{align}
for $i = 1,\ldots,n$.  
Additionally, for $\eta^2 \geq 0$, 
\begin{equation}\label{other_bds_H}
\Vert V^{\T}V \Vert  \leq \l_1 + 1, \ \ \Vert T(\eta^2)\Vert  \leq 2\l_1, \ \ \Vert
T(\eta^2)\Vert_{\HS}^2 
\leq 4\l_1^2n.
\end{equation}
Thus, Theorem \ref{thm:QF_conc} and \eqref{lip_bd1}--\eqref{other_bds_H}
imply that there is a constant $0 < C_1 < \infty$ such that 
\begin{equation}\label{p1ab1}
P_1 \leq C_1 \exp\left[-\frac{n}{C_1}\min\left\{\frac{r^2}{\g^4(\l_1+1)^{6}},\frac{r}{\g^2(\l_1+1)^3}\right\}\right],
\end{equation}
whenever $r^2 \geq C_1\g^4(\l_1+1)^{6}n^{-1}$.  

Turning our attention to $P_2$, we have
\begin{equation}\label{p1ap2}
P_2 = \P\left[\left.\sup_{0 < \eta^2 \leq 1} \vert
  \bz^{\T}Q(\eta^{-2})\bz - \E\{\bz^{\T}Q(\eta^{-2})\bz\}\vert > r\right\vert X\right].
\end{equation}
We aim to apply Theorem \ref{thm:QF_conc} again, but first need the following
Lipschitz bound:  
\begin{align}\nonumber
\vert t_i(u^{-2}) - t_i(v^{-2})\vert & = \left\vert\frac{1}{n}\sum_{j
                                       = 1}^n \frac{u^6(\l_i -
                                       \l_j)}{(\l_i + u^2)^2(\l_j + u^2)} - \frac{1}{n}\sum_{j
                                       = 1}^n \frac{v^6(\l_i -
                                       \l_j)}{(\l_i + v^2)^2(\l_j +
                                       v^2)} \right\vert \\ \nonumber
& \leq \frac{1}{n}\sum_{j = 1}^n \frac{\l_i^2\l_j\vert \l_i - \l_j\vert\vert u^6
  - v^6\vert}{(\l_i+u^2)^2(\l_j+u^2) (\l_i+v^2)^2(\l_j+v^2)} \\ \nonumber
& \qquad + \frac{1}{n}\sum_{j = 1}^n \frac{u^2v^2(\l_i^2 + 2\l_i\l_j)\vert \l_i - \l_j\vert\vert u^4
  - v^4\vert}{(\l_i+u^2)^2(\l_j+u^2) (\l_i+v^2)^2(\l_j+v^2)} \\ \nonumber
& \qquad + \frac{1}{n}\sum_{j = 1}^n \frac{u^4v^4(2\l_i + \l_j)\vert \l_i - \l_j\vert\vert u^2
  - v^2\vert}{(\l_i+u^2)^2(\l_j+u^2) (\l_i+v^2)^2(\l_j+v^2)} \\ \label{lip_bd2}
& \leq 24\vert u^2 - v^2\vert,
\end{align}
for $0 \leq u^2,v^2 \leq 1$, $i = 1,\ldots,n$.  Now apply Theorem \ref{thm:QF_conc}, using
\eqref{other_bds_H} and \eqref{lip_bd2}, to conclude that there is a constant $0 <
C_2 < \infty$ such that
\begin{align} \nonumber
\P\Bigg[\sup_{0 < \eta^2 \leq 1} &\vert
  \bz^{\T}Q(\eta^{-2})\bz - \E\{\bz^{\T}Q(\eta^{-2})\bz\}\vert >
  r\Bigg\vert X\Bigg] \\ \label{p1ab2} & \leq C_2 \exp\left[-\frac{n}{C_2}\min\left\{\frac{r^2}{\g^4(\l_1+1)^{6}},\frac{r}{\g^2(\l_1+1)^3}\right\}\right],
\end{align}
whenever $r^2 \geq C_2\g^4(\l_1+1)^{6}n^{-1}$.  The lemma follows from
\eqref{p1ad1}, \eqref{p1ab1}--\eqref{p1ap2}, and \eqref{p1ab2}.
\end{proof}

\begin{lemma}\label{lemma:llik_cty}
Let
\[
\chi(\eta_0^2,\L) = \frac{1}{2(\eta_0^2+1)^4(\l_1+1)^4(\l_{n_0}^{-1}+1)^2}.
\]
\begin{itemize}
\item[(a)] Suppose $n_0 = n$.  Then
\[
\ell_0(\eta^2_0) - \ell_0(\eta^2) \geq \frac{(\eta^2 -
  \eta_0^2)^2 \chi(\eta_0^2,\L)}{(\vert
  \eta^2-\eta_0^2\vert+1)^2}\cdot \vfrak(\L), \ \ \ 
\eta^2,\eta_0^2 \geq 0.  
\]
\item[(b)] If $n_0 < n$, then
\[
\ell_0(\eta_0^2) - \ell_0(\eta^2) \geq \frac{(\eta^2 -
  \eta_0^2)^2 \chi(\eta_0^2,\L)}{(\vert
  \eta^2-\eta_0^2\vert+1)^2} \cdot\left(1 - \frac{n_0}{n}\right) \frac{n_0}{n}, \ \ \ 
\eta^2,\eta_0^2 \geq 0.  
\]
\end{itemize}
\end{lemma}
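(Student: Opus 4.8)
The plan is to reduce both parts to an elementary variational inequality for $\log$. Expanding $\ell_0$ and using $\tfrac1n\log\det(\tfrac{\eta^2}{p}XX^{\T}+I)=\tfrac1n\sum_i\log(\eta^2\l_i+1)$, the $\log\det$ term merges with $-\tfrac12\log\s_0^2(\eta^2)$ (recall $\s_0^2(\eta^2)=\s_0^2\cdot\tfrac1n\sum_i\tfrac{\eta_0^2\l_i+1}{\eta^2\l_i+1}$ and $\s_0^2(\eta_0^2)=\s_0^2$), leaving
\[
\ell_0(\eta_0^2)-\ell_0(\eta^2)\;=\;\tfrac12\Big[\log\Big(\tfrac1n\sum_i a_i\Big)-\tfrac1n\sum_i\log a_i\Big],\qquad a_i=\frac{\eta_0^2\l_i+1}{\eta^2\l_i+1}>0.
\]
This is a Jensen gap (an arithmetic-mean/geometric-mean discrepancy), hence $\geq0$ by concavity of $\log$ — already re-deriving that $\eta_0^2$ maximizes $\ell_0$ — and the whole task is to lower bound it quantitatively in terms of $\vfrak(\L)$.

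The key quantitative tool is a reverse Jensen bound: since $x\mapsto-\log x$ is convex with $(-\log)''(x)=x^{-2}\geq M^{-2}$ on $(0,M]$, where $M=\max_i a_i$, Taylor's theorem at the mean gives $\log\bar a-\overline{\log a}\geq\tfrac1{2M^2}\,\Var_I(a_I)$ with $I$ uniform on $\{1,\dots,n\}$ (one can sharpen $M^2$ to $M\bar a$ if needed). I then compute $\Var_I(a_I)=\tfrac1{2n^2}\sum_{i,j}(a_i-a_j)^2$ using the algebraic identity $a_i-a_j=\dfrac{(\eta_0^2-\eta^2)(\l_i-\l_j)}{(\eta^2\l_i+1)(\eta^2\l_j+1)}$, which yields
\[
\Var_I(a_I)=(\eta_0^2-\eta^2)^2\,\frac1{2n^2}\sum_{i,j}\frac{(\l_i-\l_j)^2}{(\eta^2\l_i+1)^2(\eta^2\l_j+1)^2}\;\geq\;\frac{(\eta_0^2-\eta^2)^2\,\vfrak(\L)}{(\eta^2\l_1+1)^4},
\]
where I used $\eta^2\l_i+1\leq\eta^2\l_1+1$ and the identity $\tfrac1{2n^2}\sum_{i,j}(\l_i-\l_j)^2=\vfrak(\L)$ from the proof of Lemma~\ref{lemma:H_bd}. (Equivalently one can route this through the integral representation $\ell_0(\eta_0^2)-\ell_0(\eta^2)=\int_{\eta^2\wedge\eta_0^2}^{\eta^2\vee\eta_0^2}\tfrac{|H_0(s)|}{2\s_0^2(s)}\,ds$, valid because $\tfrac{d}{d\eta^2}\ell_0(\eta^2)=\tfrac{H_0(\eta^2)}{2\s_0^2(\eta^2)}$, and invoke the bound on $|H_0|$ from Lemma~\ref{lemma:H_bd} together with $\s_0^2(s)\le\s_0^2(\eta_0^2+1)(\l_1+1)/(s\l_{n_0}+1)$.)

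It then remains to control $M$ and collapse everything to the stated constant. Since $g(\l)=\tfrac{\eta_0^2\l+1}{\eta^2\l+1}$ is monotone with $g(0)=1$, one gets $M=\tfrac{\eta_0^2\l_1+1}{\eta^2\l_1+1}$ when $\eta^2\leq\eta_0^2$ and $M=\tfrac{\eta_0^2\l_{n_0}+1}{\eta^2\l_{n_0}+1}$ when $\eta^2>\eta_0^2$ (here $n_0=n$ ensures $\l_{n_0}>0$), and in either case $1/M^2$ carries a compensating factor $(\eta^2\l_1+1)^2$ or $(\eta^2\l_{n_0}+1)^2$ that cancels against the $(\eta^2\l_i+1)$'s appearing in $\Var_I(a_I)$. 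Substituting, and finishing with the crude inequalities $ab+1\le(a+1)(b+1)$, $\l_1\l_{n_0}^{-1}\le(\l_1+1)(\l_{n_0}^{-1}+1)$, $\eta^2\l_1+1\le(|\eta^2-\eta_0^2|+1)(\eta_0^2+1)(\l_1+1)$, $(\eta^2-\eta_0^2)^2\ge(\eta^2-\eta_0^2)^2/(|\eta^2-\eta_0^2|+1)^2$, and absorbing numerical constants into $\chi(\eta_0^2,\L)$, gives part (a). For part (b), $n-n_0$ of the $a_i$ equal $1$ (the null eigenvalues); bounding $\Var_I(a_I)$ from the cross terms between these and the rest, via $(a_i-1)^2=(\eta_0^2-\eta^2)^2\l_i^2/(\eta^2\l_i+1)^2$, produces the extra factor $(1-n_0/n)(n_0/n)$ in place of $\vfrak(\L)$, with $\l_{n_0}$ now the smallest positive eigenvalue and the rest of the estimates unchanged.

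The main obstacle I anticipate is this last, bookkeeping-heavy step — landing exactly on $\chi(\eta_0^2,\L)$. The delicate regime is $\eta^2>\eta_0^2$ large: there $M\to0$ while the $(\eta^2\l_i+1)^{-2}$ in $\Var_I(a_I)$ decay, so one must not bound $\eta^2\l_1+1$ by anything growing with $\eta^2$ (that destroys the constant) but instead keep the $\eta^2$-dependent factors paired until the final collapse — conveniently done by the substitution $s=\eta_0^2+(\eta^2-\eta_0^2)u$ in the integral form, or equivalently by noting that $M/(\eta^2\l_k+1)$ stays uniformly bounded. Everything else is routine calculus and algebra.
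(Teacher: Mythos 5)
Your proposal is correct and follows essentially the same route as the paper: both reduce $\ell_0(\eta_0^2)-\ell_0(\eta^2)$ to half the Jensen gap of $\log$ over $a_i=(\eta_0^2\l_i+1)/(\eta^2\l_i+1)$, lower-bound that gap by a curvature term times $\mathrm{Var}_I(a_I)$ via second-order Taylor expansion, evaluate the variance through the same pairwise identity, and in part (b) retain only the cross terms between zero and nonzero eigenvalues to produce the factor $(1-n_0/n)(n_0/n)$. The only cosmetic difference is that the paper inserts a free rescaling $T(\eta^2)=\eta^2 a+b$ inside the logarithm so the curvature bound $h$ is $\eta^2$-independent, whereas you take the curvature at $M=\max_i a_i$ and cancel the $\eta^2$-dependence afterwards — equivalent bookkeeping, with the large-$\eta^2$ pairing issue you flag handled the same way.
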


\begin{proof}
Let $T(\eta^2) = \eta^2a + b$, where $a,b \geq 0$ are nonnegative numbers to be specified further
below, and note that
\begin{equation}\label{lemma_diff0}
\ell_0(\eta_0^2) - \ell_0(\eta^2)  =
\frac{1}{2}\log\left\{\frac{T(\eta^2)}{n}\sum_{i=1}^n \frac{\eta_0^2 \l_i +
                                    1}{\eta^2\l_i+1}\right\} -\frac{1}{2n}\sum_{i = 1}^n \log
                                    \left\{T(\eta^2)\left(\frac{\eta_0^2 \l_i +
                                    1}{\eta^2\l_i+1}\right) \right\}.
\end{equation}
By Taylor's theorem, 
\begin{align} \nonumber
\log
                                    \left\{T(\eta^2)\left(\frac{\eta_0^2 \l_i +
                                    1}{\eta^2\l_i+1}\right) \right\}& \leq \log\left\{\frac{T(\eta^2)}{n}\sum_{i=1}^n \frac{\eta_0^2 \l_i +
                                    1}{\eta^2\l_i+1}\right\} \\ \nonumber
& \qquad + \left(\frac{1}{n}\sum_{i=1}^n \frac{\eta_0^2 \l_i +
                                    1}{\eta^2\l_i+1}\right)^{-1}\left(\frac{\eta_0^2 \l_j +
                                    1}{\eta^2\l_j+1} - \frac{1}{n}\sum_{i=1}^n \frac{\eta_0^2 \l_i +
                                    1}{\eta^2\l_i+1}\right) \\ \label{taylor}
& \qquad - \frac{1}{2h_j^2}\left(\frac{\eta_0^2 \l_j +
                                    1}{\eta^2\l_j+1} - \frac{1}{n}\sum_{i=1}^n \frac{\eta_0^2 \l_i +
                                    1}{\eta^2\l_i+1}\right)^2T(\eta^2)^2,
\end{align}
where $h_j \geq 0$ is between 
\[
T(\eta^2)\left(\frac{\eta_0^2 \l_i +
                                    1}{\eta^2\l_i+1}\right) \mbox{ and
                                } \frac{T(\eta^2)}{n}\sum_{i=1}^n \frac{\eta_0^2 \l_i +
                                    1}{\eta^2\l_i+1}.
\]
Now let $h\geq 0$ be any number satisfying $\max_{j = 1,\ldots,n} h_j
\leq h$.  Summing from $j =1,\ldots,n$ in \eqref{taylor} and plugging this in to
\eqref{lemma_diff0} yields
\begin{equation}\label{lemma_diff1}
\ell_0(\eta_0^2) - \ell_0(\eta^2)  \geq
                                    \frac{T(\eta^2)^2}{2h^2}\Delta(\eta^2,\eta_0^2),         
\end{equation}
where
\begin{align*}
\Delta(\eta^2,\eta_0^2) & = \frac{1}{n}\sum_{j =1}^n \left(\frac{\eta_0^2 \l_j +
                                    1}{\eta^2\l_j+1} - \frac{1}{n}\sum_{i=1}^n \frac{\eta_0^2 \l_i +
                                    1}{\eta^2\l_i+1}\right)^2 \\
& = \frac{1}{n}\sum_{i =1}^n \left(\frac{\eta_0^2 \l_i +
                                    1}{\eta^2\l_i+1}\right)^2 - \left(\frac{1}{n}\sum_{i=1}^n \frac{\eta_0^2 \l_i +
                                    1}{\eta^2\l_i+1}\right)^2 \\
& = \frac{\eta_0^2-\eta^2}{n^2}\sum_{i,j=1}^n
  \frac{(\eta_0^2\l_i+1)(\l_i - \l_j)}{(\eta^2\l_i+1)^2(\eta^2\l_j+1)}
  \\
& = -\frac{\eta_0^2-\eta^2}{n^2}\sum_{i,j=1}^n
  \frac{(\eta_0^2\l_j+1)(\l_i - \l_j)}{(\eta^2\l_i+1)(\eta^2\l_j+1)^2}.
\end{align*}
Adding the last two expressions above and dividing by two, we obtain
\[
\Delta(\eta^2,\eta_0^2)  = \frac{(\eta^2 - \eta_0^2)^2}{2n^2}\sum_{i,j=1}^n
    \frac{(\l_i - \l_j)^2}{(\eta^2\l_i+1)^2(\eta^2\l_j+1)^2}.  
\]
Thus, combining this with \eqref{lemma_diff1}, it follows that
\begin{equation}\label{master_bd}
\ell_0(\eta_0^2) - \ell_0(\eta^2) \geq \frac{(\eta^2a+b)^2(\eta^2 -
  \eta_0^2)^2}{4h^2n^2}\sum_{i,j=1}^n \frac{(\l_i - \l_j)^2}{(\eta^2\l_i+1)^2(\eta^2\l_j+1)^2}.
\end{equation}
Now we consider the cases where $n_0=n$ and $n_0<n$ separately.  

Suppose that $n_0=n$ and let $a = \l_1$, $b =1$ in $T(\eta^2)$.
Then we can take $h =
(\eta_0^2+1)(\l_1+1)^2(\l_n^{-1}+1)$, and \eqref{master_bd} implies
\begin{align*}
\ell_0(\eta_0^2) - \ell_0(\eta^2) & \geq \frac{(\eta^2 -
  \eta_0^2)^2}{2(\eta^2\l_1+1)^2(\eta_0^2+1)^2(\l_1+1)^4(\l_n^{-1}+1)^2}\vfrak(\L)
  \\
&  \geq \frac{(\eta^2 - \eta_0^2)^2}{2(\vert \eta^2 - \eta_0^2\vert+1)^2(\eta_0^2+1)^4(\l_1+1)^6(\l_n^{-1}+1)^2}\vfrak(\L).
\end{align*}
Part (a) follows.  

Now assume that $n_0 < n$.  Let $a=0$, $b=1$ and $h=(\eta_0^2+1)(\l_1 + 1)$. Then, by \eqref{master_bd},
\begin{align*}
\ell_0(\eta^2_0) - \ell_0(\eta^2) &
                                  \geq \frac{(\eta^2 -
                                    \eta_0^2)^2}{4(\eta_0^2+1)^2(\l_1+1)^2n^2
                                    } \sum_{i,j=1}^{n_0}
                                    \frac{(\l_i-\l_j)^2}{(\eta^2\l_i+1)^2(\eta^2\l_j+1)^2}
  \\
&\qquad 
                                    + \frac{(\eta^2 -
                                    \eta_0^2)^2(n-n_0)}{2(\eta_0^2+1)^2(\l_1+1)^2n^2
                                    } \sum_{i = 1}^{n_0}
  \frac{\l_i^2}{(\eta^2\l_i+1)^2} \\
& \geq \frac{(\eta^2 -
                                    \eta_0^2)^2}{2 (\vert \eta^2 - \eta_0^2\vert+1)^2 (\eta_0^2+1)^4(\l_1+1)^4(\l_{n_0}^{-1}+1)^2
                                    } \cdot \left( 1- \frac{n_0}{n}\right)\frac{n_0}{n}.
\end{align*}
This implies part (b).  
\end{proof}

\begin{lemma}\label{lemma:llik_dev}
Assume that the random variables
$\b_1,\ldots,\b_p,\e_1,\ldots,\e_n$ satisfy \eqref{subgbd} and let $\omega(\L)$ be as defined in \eqref{omega}. 
\begin{itemize}
\item[(a)] Suppose that $n_0 = n$.  There exists an absolute constant
  $0 < C<  \infty$ such that
\begin{align*}
\P\Bigg\{\sup_{0 \leq \eta^2 < \infty} & \vert \ell_*(\eta^2) -
  \ell_0(\eta^2)\vert > r\Bigg\vert X\Bigg\} \\ & \leq  C\exp\left[-\frac{n}{C}\cdot \frac{\s_0^4\omega(\L)^2}{\g^2(\g^2+1)(\s_0^2+1)(\l_1+1)^2}\min\left\{r^2,r,1\right\}\right]
\end{align*}
for all $r \geq 0$.  
\item[(b)] Suppose that $n_0 < n$. There exists an absolute constant
  $0 < C<  \infty$ such that
\begin{align*}
\P\Bigg\{\sup_{0 \leq \eta^2 < \infty}  & \vert \ell_*(\eta^2) -
  \ell_0(\eta^2)\vert > r\Bigg\vert X\Bigg\} \\ 
& \leq C\exp\left[-\frac{n}{C}\cdot
  \frac{\s_0^4\omega(\L)^2}{\g^2(\g^2+1)(\s_0^2+1)}\left(1 - \frac{n_0}{n}\right)^2\min\left\{r^2,r,1\right\}\right]
\end{align*}
for all $r \geq 0$.  
\end{itemize}
\end{lemma}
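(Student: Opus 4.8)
Since $\s_*^2(\eta^2)=\tfrac1n\y^\T(\tfrac{\eta^2}{p}XX^\T+I)^{-1}\y$ is exactly the maximizer of $\ell(\cdot,\eta^2)$, plugging it into $\ell$ makes the quadratic–form term equal to $-\tfrac12$, and subtracting the defining expression for $\ell_0$ cancels the $\log\det$ terms, so a one–line computation gives $\ell_*(\eta^2)-\ell_0(\eta^2)=-\tfrac12\log\{\s_*^2(\eta^2)/\s_0^2(\eta^2)\}$ for every $\eta^2\ge 0$. Writing $\rho(\eta^2)=\{\s_*^2(\eta^2)-\s_0^2(\eta^2)\}/\s_0^2(\eta^2)$, this equals $\tfrac12|\log(1+\rho(\eta^2))|$. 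The plan is to pass from a tail bound on $\sup_{\eta^2}|\rho(\eta^2)|$ to the claimed bound, and to control $\sup_{\eta^2}|\rho(\eta^2)|$ via Lemma \ref{lemma:s2_unif}. For the first step I would split according to whether $1+\rho(\eta^2)\in[\tfrac12,2]$ (where $|\log(1+\rho)|\le 2|\rho|$) or not (where automatically $|\rho|\ge\tfrac12$), which shows $\{\sup_{\eta^2}|\ell_*-\ell_0|>r\}\subseteq\{\sup_{\eta^2}|\rho(\eta^2)|>\tfrac12\min(r,1)\}$.

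Next I would produce a uniform–in–$\eta^2$ lower bound for $\s_0^2(\eta^2)=\tfrac{\s_0^2}{n}\sum_i(\eta_0^2\l_i+1)/(\eta^2\l_i+1)$. Using $\eta_0^2\l_i+1\ge1$ and $\eta^2\l_i+1\le(\l_1+1)\max(\eta^2,1)$ gives $\s_0^2(\eta^2)\ge\s_0^2/(\l_1+1)$ for $\eta^2\le1$ and $\eta^2\s_0^2(\eta^2)\ge\s_0^2/(\l_1+1)$ for $\eta^2>1$; when $n_0<n$, the $\l_i=0$ summands additionally give the flat bound $\s_0^2(\eta^2)\ge\s_0^2(1-n_0/n)$ for all $\eta^2$. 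Consequently, in the case $n_0=n$, splitting at $\eta^2=1$ yields $\sup_{\eta^2\ge0}|\rho(\eta^2)|\le\tfrac{\l_1+1}{\s_0^2}\max\{\,\sup_{\eta^2}|\s_*^2(\eta^2)-\s_0^2(\eta^2)|,\ \sup_{\eta^2}\eta^2|\s_*^2(\eta^2)-\s_0^2(\eta^2)|\,\}$, and in the case $n_0<n$, $\sup_{\eta^2\ge0}|\rho(\eta^2)|\le\tfrac{1}{\s_0^2(1-n_0/n)}\sup_{\eta^2}|\s_*^2(\eta^2)-\s_0^2(\eta^2)|$.

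Finally I would invoke Lemma \ref{lemma:s2_unif}. For part (a) ($n_0=n$), apply parts (a) and (b) of that lemma — part (b) being exactly where the hypothesis $n_0=n$ enters — with radius $\asymp\s_0^2\min(r,1)/(\l_1+1)$ to bound each of the two suprema above, then union–bound. For part (b) ($n_0<n$), only Lemma \ref{lemma:s2_unif}(a) is needed, applied with radius $\asymp\s_0^2(1-n_0/n)\min(r,1)$. In both cases the exponent coming out of Lemma \ref{lemma:s2_unif} has the form $\min\{\epsilon^2\omega(\L)^2/\g^4,\ \epsilon\omega(\L)/\g^2\}$ with $\epsilon$ the corresponding radius; using $\min(r,1)^2=\min(r^2,r,1)$, $\omega(\L)\le1$, $\s_0^2\le\s_0^2+1$ and $\g^4\le\g^2(\g^2+1)$ one rewrites this (up to an absolute constant) as the exponent in the statement, which is then absorbed into $C$. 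The one genuinely structural point — and the thing to get right — is the middle paragraph: because $\s_0^2(\eta^2)$ decays like $\eta^{-2}$ as $\eta^2\to\infty$ when every eigenvalue is positive, case (a) really does require the $\eta^2$–weighted uniform bound of Lemma \ref{lemma:s2_unif}(b); when $n_0<n$ the flat lower bound $\s_0^2(1-n_0/n)$ removes this need but explains the extra $(1-n_0/n)^2$ factor. Everything else is bookkeeping.
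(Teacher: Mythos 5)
Your proposal is correct and follows essentially the same route as the paper's proof: the same identity $\ell_*-\ell_0=-\tfrac12\log\{\s_*^2/\s_0^2\}$, the same lower bounds $(\eta^2+1)\s_0^2(\eta^2)\ge\s_0^2/(\l_1+1)$ when $n_0=n$ and $\s_0^2(\eta^2)\ge\s_0^2(1-n_0/n)$ when $n_0<n$, and the same invocation of Lemma \ref{lemma:s2_unif} (parts (a) and (b) for the weighted supremum in case (a), part (a) alone in case (b)). The only cosmetic difference is the linearization of the logarithm --- you use $|\log(1+\rho)|\le 2|\rho|$ on $1+\rho\in[\tfrac12,2]$ together with $|\rho|\ge\tfrac12$ otherwise, whereas the paper splits into two one-sided events and separately rules out $\s_*^2$ being too small --- but these lead to the same thresholds $\asymp\s_0^2\min(r,1)$ and the same final bound.
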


\begin{proof}
To prove this lemma, we use Lemma \ref{lemma:s2_unif}.  First notice that
\[
\ell_*(\eta^2) - \ell_0(\eta^2) =
\frac{1}{2}\log\{\s_0^2(\eta^2)\} -
\frac{1}{2}\log\{\s_*^2(\eta^2)\}.  
\]
Next, assume that $n = n_0$.  Then
\[
(\eta^2+1)\s_0^2(\eta^2) = \frac{\s_0^2(\eta^2+1)}{n}\sum_{i = 1}^n \frac{\eta_0^2\l_i +
  1}{\eta^2\l_i + 1} \geq \frac{\s_0^2}{n}\sum_{i = 1}^n \frac{\eta_0^2\l_i +
  1}{\l_i + 1} \geq \frac{\s_0^2}{\l_1 + 1}.
\]
It follows that
\begin{align*}
\Bigg\{\sup_{0 \leq \eta^2 < \infty} & \vert \ell_*(\eta^2) -
  \ell_0(\eta^2)\vert > r\Bigg\} \\ & = \left\{\sup_{0 \leq \eta^2 <
                                    \infty} \left\vert \log\{\s_0^2(\eta^2)\}
                                    -\log\{\s_*^2(\eta^2)\}\right\vert >
                                    2r\right\}\\
& =\left\{\sup_{0 \leq \eta^2 <
                                    \infty} \log\{\s_0^2(\eta^2)\}
                                    -\log\{\s_*^2(\eta^2)\}>
                                    2r\right\} \\
& \qquad \cup \left\{\sup_{0 \leq \eta^2 <
                                    \infty} \log\{\s_*^2(\eta^2)\}-\log\{\s_0^2(\eta^2)\}
                                    >
                                    2r\right\} \\
& \subseteq \left\{\sup_{0 \leq \eta^2 <
                                    \infty}\frac{\s_0^2(\eta^2) - \s_*^2(\eta^2)}{\s_*^2(\eta^2)} >
                                    2r\right\} \cup \left\{\sup_{0 \leq \eta^2 <
                                    \infty} \frac{\s_*^2(\eta^2) - \s_0^2(\eta^2)}{\s_0^2(\eta^2)}>
                                    2r\right\} \\
& \subseteq  \left\{\sup_{0 \leq \eta^2 <
                                    \infty} (\eta^2 + 1)\left\vert \s_0^2(\eta^2)- \s_*^2(\eta^2)\right\vert >
                                    \frac{4\s_0^2 r}{\l_1+1}\right\}
  \\
& \qquad \cup \left\{\sup_{0 \leq \eta^2 < \infty}
  (\eta^2 + 1)\s_*^2(\eta^2) < \frac{\s_0^2}{2(\l_1+1)} \right\} \\
& \subseteq \left\{\sup_{0 \leq \eta^2 <
                                    \infty} (\eta^2 + 1)\left\vert \s_0^2(\eta^2)- \s_*^2(\eta^2)\right\vert>
                                    \frac{4\s_0^2 r}{\l_1+1}\right\}
  \\
& \qquad \cup \left\{\sup_{0 \leq \eta^2 < \infty}
   (\eta^2 + 1)\left\vert \s_0^2(\eta^2)-
  \s_*^2(\eta^2)\right\vert > \frac{\s_0^2}{2(\l_1+1)} \right\}.
\end{align*}
Thus, by Lemma \ref{lemma:s2_unif} (a)--(b), there is a constant $0 < C
<\infty$ such that
\begin{align*}
\P\Bigg\{\sup_{0 \leq \eta^2 < \infty}  \vert \ell_*(\eta^2) -
  \ell_0(\eta^2)\vert > r\Bigg\vert X\Bigg\}  & \leq
C\exp\left[-\frac{n}{C}\min\left\{\frac{\s_0^4 \omega(\L)^2 }{\g^4(\l_1+1)^2}r^2,\frac{\s_0^2\omega(\L)}{\g^2(\l_1+1)}r\right\}\right]
                                 \\
& \qquad +
  C\exp\left[-\frac{n}{C}\min\left\{\frac{\s_0^4 \omega(\L)^2}{\g^4(\l_1+1)^2},\frac{\s_0^2 \omega(\L)}{\g^2(\l_1+1)}\right\}\right],
\end{align*}
whenever $r^2 \geq C \g^4 \s_0^{-4}(\l_1+1)^2\omega(\L)^{-2}n^{-1}$.
Part (a) of the lemma
follows.  

To prove part (b) of the lemma, assume that $n_0 < n$.  Then
$\s_0^2(\eta^2)\geq \s_0^2(\eta_0^2\l_{n_0}+1)(1 - n_0/n)$.
Similar to the proof of part (a), it follows that
\begin{align*}
\left\{\sup_{0 \leq \eta^2 < \infty} \vert \ell_*(\eta^2) -
  \ell_0(\eta^2)\vert > r\right\} & \subseteq \left\{\sup_{0 \leq \eta^2 <
                                    \infty} \left\vert \s_0^2(\eta^2)- \s_*^2(\eta^2)\right\vert>
                                    4\s_0^2\left(1 - \frac{n_0}{n}\right)r\right\}
  \\
& \qquad \cup \left\{\sup_{0 \leq \eta^2 < \infty}
 \left\vert \s_0^2(\eta^2)- \s_*^2(\eta^2)\right\vert>
  \frac{\s_0^2}{2}\left(1 - \frac{n_0}{n}\right) \right\}.
\end{align*}
Thus, by Lemma  \ref{lemma:s2_unif} (a), there is a constant $0 < C
<\infty$ such that
\begin{align*}
\P\Bigg\{\sup_{0 \leq \eta^2 < \infty}  \vert \ell_*(\eta^2) -&
  \ell_0(\eta^2)\vert > r\Bigg\vert X\Bigg\} \\ & \leq
C\exp\left[-\frac{n}{C}\min\left\{\frac{\s_0^4\omega(\L)^2}{\g^4}\left(1 - \frac{n_0}{n}\right)^2r^2,\frac{\s_0^2\omega(\L)}{\g^2}\left(1 - \frac{n_0}{n}\right)r\right\}\right]
                                 \\
& \qquad +
  C\exp\left[-\frac{n}{C}\min\left\{\frac{\s_0^4\omega(\L)^2}{\g^4}\left(1
  - \frac{n_0}{n}\right)^2,\frac{\s_0^2\omega(\L)}{\g^2}\left(1 -
  \frac{n_0}{n}\right)\right\}\right], 
\end{align*}
whenever $r^2 \geq C\g^4 \s_0^{-4}\omega(\L)^{-2}n(n-n_0)^{-2}$.
This implies part (b). 
\end{proof}

\begin{lemma}\label{lemma:vqf}
Let $Q = (q_{ij})$ be a $d \times d$ positive semidefinite matric and let $\bz =
(\zeta_1,\ldots,\zeta_d)^{\T} \in \R^d$ be a random vector with
independent components that have mean zero and variance 1.  Let $\mu_j^{(k)}
= \E(\vert\zeta_j\vert^k)$ and assume that $\mu_j^{(4)} < \infty$.
Finally, define $\bm_4 = (\mu_1^{(4)},\ldots,\mu_d^{(4)})^{\T}$, $\q_k =
(q_{11}^k,\ldots,q_{dd}^k)^{\T}$.  Then
\begin{equation}\label{vqf}
\Var(\bz^{\T}Q\bz)  = \bm_4^{\T}\q_2 - 3\Vert\q\Vert^2 + 2\tr(Q^2)
\leq \left[2 + \max\{\mu_j^{(4)}\}\right]d\Vert Q\Vert^2.
\end{equation}
\end{lemma}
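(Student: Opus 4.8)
The plan is to expand the quadratic form and compute its second moment term by term. Since $Q$ is positive semidefinite it is symmetric, so writing $\bz^{\T}Q\bz = \sum_{i,j=1}^d q_{ij}\zeta_i\zeta_j$ we have
\[
(\bz^{\T}Q\bz)^2 = \sum_{i,j,k,l=1}^d q_{ij}q_{kl}\,\zeta_i\zeta_j\zeta_k\zeta_l .
\]
By independence together with $\E(\zeta_j)=0$ and $\E(\zeta_j^2)=1$, the expectation $\E(\zeta_i\zeta_j\zeta_k\zeta_l)$ is nonzero only when the four indices form a single quadruple $i=j=k=l$ (contributing $\mu_i^{(4)}$) or two disjoint pairs, i.e.\ one of $i=j\neq k=l$, $i=k\neq j=l$, $i=l\neq j=k$ (each contributing $1$). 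The first step I would take is to enumerate exactly these surviving patterns, being careful not to double-count configurations in which two of the pair-patterns coincide; this bookkeeping is the only place a slip is possible, but it is entirely routine.

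Carrying this out, and using the symmetry of $Q$ to merge the $i=k\neq j=l$ and $i=l\neq j=k$ contributions, gives
\[
\E\{(\bz^{\T}Q\bz)^2\} = \sum_{i=1}^d \mu_i^{(4)}q_{ii}^2 + \sum_{i\neq k} q_{ii}q_{kk} + 2\sum_{i\neq j} q_{ij}^2 .
\]
Then I would substitute $\sum_{i\neq k} q_{ii}q_{kk} = (\tr Q)^2 - \Vert\q\Vert^2$ and, using $\tr(Q^2)=\sum_{i,j}q_{ij}^2$ (again symmetry), $2\sum_{i\neq j} q_{ij}^2 = 2\tr(Q^2) - 2\Vert\q\Vert^2$, and subtract $\{\E(\bz^{\T}Q\bz)\}^2 = (\tr Q)^2$, which follows from $\E(\zeta_j^2)=1$. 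This yields the claimed identity
\[
\Var(\bz^{\T}Q\bz) = \bm_4^{\T}\q_2 - 3\Vert\q\Vert^2 + 2\tr(Q^2).
\]

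For the inequality I would discard the nonnegative term $3\Vert\q\Vert^2$ (dropping it only weakens the upper bound), bound $\bm_4^{\T}\q_2 = \sum_j \mu_j^{(4)}q_{jj}^2 \leq \bigl(\max_j\mu_j^{(4)}\bigr)\Vert\q\Vert^2$, and then apply the two elementary spectral estimates $\Vert\q\Vert^2 = \sum_j q_{jj}^2 \leq d\Vert Q\Vert^2$ (since $0 \leq q_{jj} = \mathbf{e}_j^{\T}Q\mathbf{e}_j \leq \Vert Q\Vert$) and $\tr(Q^2) = \sum_i \lambda_i(Q)^2 \leq d\Vert Q\Vert^2$ (each eigenvalue of the positive semidefinite matrix $Q$ lies in $[0,\Vert Q\Vert]$). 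Combining these gives $\Var(\bz^{\T}Q\bz) \leq \bigl(\max_j\mu_j^{(4)}\bigr)d\Vert Q\Vert^2 + 2d\Vert Q\Vert^2 = [\,2+\max_j\mu_j^{(4)}\,]\,d\Vert Q\Vert^2$, as required. No step is a genuine obstacle; the enumeration of the fourth-moment index patterns is the one item that must be handled with care.
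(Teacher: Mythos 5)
Your proposal is correct and follows essentially the same route as the paper's proof: expand $\E\{(\bz^{\T}Q\bz)^2\}$, enumerate the surviving fourth-moment index patterns, and simplify to $\bm_4^{\T}\q_2 - 3\Vert\q\Vert^2 + 2\tr(Q^2)$, with the upper bound obtained by the same elementary estimates on $\Vert\q\Vert^2$ and $\tr(Q^2)$. Nothing further is needed.
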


\begin{proof}
The inequality in \eqref{vqf} is obvious.  To
prove the equality, we have
\begin{align*}
\Var(\bz^{\T}Q\bz) & = \E\left\{(\bz^{\T}Q\bz)^2\right\} -
                     \E\left\{\bz^{\T}Q\bz\right\}^2 \\
& = \E\left\{\left(\sum_{i,j=1}^d
  q_{ij}\zeta_i\zeta_j\right)^2\right\}- \tr(Q)^2 \\
& = \E\left(\sum_{i,j,k,l=1}^d
  q_{ij}q_{kl}\zeta_i\zeta_j\zeta_k\zeta_l\right) - \tr(Q)^2 \\
& = \E\left\{\sum_{i = 1}^n q_{ii}^2\zeta_i^4\right\}
  +2\E\left\{\sum_{1 \leq i < j \leq d} (2q_{ij}^2 +
  q_{ii}q_{jj})\zeta_i^2\zeta_j^2\right\}  - \tr(Q)^2 \\
& = \bm_4^{\T}\q_2 + 2\sum_{i\neq j} q_{ij}^2 +
  \sum_{i \neq j} q_{ii}q_{jj} - \tr(Q)^2 \\
& = \bm_4^{\T}\q_2 - 3\Vert\q\Vert^2 + 2\tr(Q^2),
\end{align*}
as was to be shown.  
\end{proof}

\end{document}